\newcommand*{\p}{\mathbb{P}}
\newcommand*{\e}{\mathbb{E}}
\newcommand*{\D}{\mathrm{d}}
\newcommand*{\ges}{\geqslant}
\newcommand*{\les}{\leqslant}
\newcommand*{\indf}{1\negmedspace\mathrm{I}}
\newcommand*{\oq}{\overline{q}}
\newcommand*{\ulX}{\underline{X}}
\newcommand*{\oh}{\mathrm{o}}
\newcommand*{\Oh}{\mathrm{O}}
\newcommand*{\mcC}{\mathcal{C}}
\newcommand*{\mcD}{\mathcal{D}}
\newcommand*{\mcF}{\mathcal{F}}
\newcommand*{\mcG}{\mathcal{G}}
\newcommand*{\mcH}{\mathcal{H}}
\newcommand*{\mcS}{\mathcal{S}}
\newcommand*{\mbR}{\mathbb{R}}
\newcommand*{\mbC}{\mathbb{C}}
\newcommand*{\ve}{\varepsilon}
\newcommand*{\cas}{\xrightarrow{\mathrm{a.s.}}}
\newcommand*{\cid}{\xrightarrow{\mathrm{d}}}
\newcommand*{\cip}{\xrightarrow{\mathbb{P}}}
\newcommand*{\var}{\mathrm{var}}
\newcommand*{\cov}{\mathrm{cov}}
\newcommand*{\as}{\mathrm{a.s.}}
\DeclarePairedDelimiter{\abs}{\lvert}{\rvert}
\newtheorem{thm}{Theorem}
\newtheorem{lem}[thm]{Lemma}
\newtheorem{prop}[thm]{Proposition}
\newtheorem{cor}[thm]{Corollary}
\numberwithin{equation}{section}
\begin{document}

\title[Probability of total domination]{Probability of total domination for\\ transient reflecting processes in a quadrant}

\author[V.\ Fomichov]{Vladimir~Fomichov}
\author[S.\ Franceschi]{Sandro~Franceschi}
\author[J.\ Ivanovs]{Jevgenijs~Ivanovs}
\address{Aarhus University and T\'{e}l\'{e}com SudParis}

\begin{abstract}
We consider two-dimensional L\'evy processes reflected to stay in the positive quadrant. Our focus is on the non-standard regime when the mean of the free process is negative but the reflection vectors point away from the origin, so that the reflected process escapes to infinity along one of the axes. Under rather general conditions, it is shown that such behaviour is certain and each component can dominate the other with positive probability for any given starting position. Additionally, we establish the corresponding invariance principle providing justification for the use of the reflected Brownian motion as an approximate model. Focusing on the probability that the first component dominates, we derive a kernel equation for the respective Laplace transform in the starting position. This is done for the compound Poisson model with negative exponential jumps and, by means of approximation, for the Brownian model. Both equations are solved via boundary value problem analysis, which also yields the domination probability when starting at the origin. Finally, certain asymptotic analysis and numerical results are presented.
\end{abstract}

\keywords{Carleman boundary value problem; kernel equation; L\'evy processes; reflected Brownian motion; Skorokhod problem; uniform law of large numbers.}
\subjclass[2010]{Primary: 60G51; 60J65. Secondary: 60K25; 60K40}

\maketitle

\section{Introduction}
Reflected processes occupy a prominent role in operations research and applied probability literature. In the one-dimensional setting, reflection is specified in terms of the classical Skorokhod problem, and it is widely used to model workload in queues, as well as capital injections and dividends in risk insurance, just to name a few applications. Multidimensional models, allowing for various new features, have been extensively studied as well. We only mention the classical monographs~\cite{cohen_boxma} and~\cite{fayolle_random_2017}, as well as a survey paper~\cite{williams_survey} on the semimartingale reflected Brownian motion. Apart from studying some fundamental properties of the multidimensional model \cite{zwart_SRBM,taylor_williams}, most of the work focuses on the recurrent case and the stationary distribution of the reflected process; see~\cite{dieker_reflected_2009,franceschi_2019} for some recent work. Potential theory and Green functions have also been considered~\cite{bass1991,kurkova_martin_1998}. Another quantity of interest is the probability of hitting the origin for a transient process, which in the insurance context can be interpreted as ruin in a model of two collaborating companies~\cite{albrecher_17,ivanovs_boxma}; see also \cite{hobson_rogers,varadhan_williams} for some fundamentals concerning the Brownian model.

In this paper we consider a bivariate L\'evy process with a negative mean in a non-standard regime, where the reflection vectors point away from the origin, forcing the reflected process to escape to infinity along one of the axes. We say that the first component totally dominates the second if the process escapes to infinity along the $x$-axis, that is, the first component grows to infinity while the second becomes relatively negligible; see Figure~\ref{fig:domination} for an illustration. Under rather general conditions, it is shown in Theorem~\ref{thm:domination} that one of the components dominates the other almost surely and that each component can be dominant with positive probability for any fixed initial position. Additionally, we establish an invariance principle in Theorem~\ref{thm:dom_prob_conv} justifying, for example, the use of the Brownian approximation in applications.

Some of the possible interpretations of our model include the following:
\begin{itemize}
\item
Two funds diminishing on average, with an agreement that deficit in one fund is instantaneously covered together with a proportional capital inflow in the other. This inflow may also result indirectly from the loss of rating or trust.

\item
Two coupled servers with a special feature that one server upon becoming idle hinders the other (or provides some extra work for the other).
\end{itemize}
We mainly think about the first interpretation and sometimes use the respective terminology, such as capital and injections.

\begin{figure}[ht!]
\begin{center}
\includegraphics[width=0.6\textwidth]{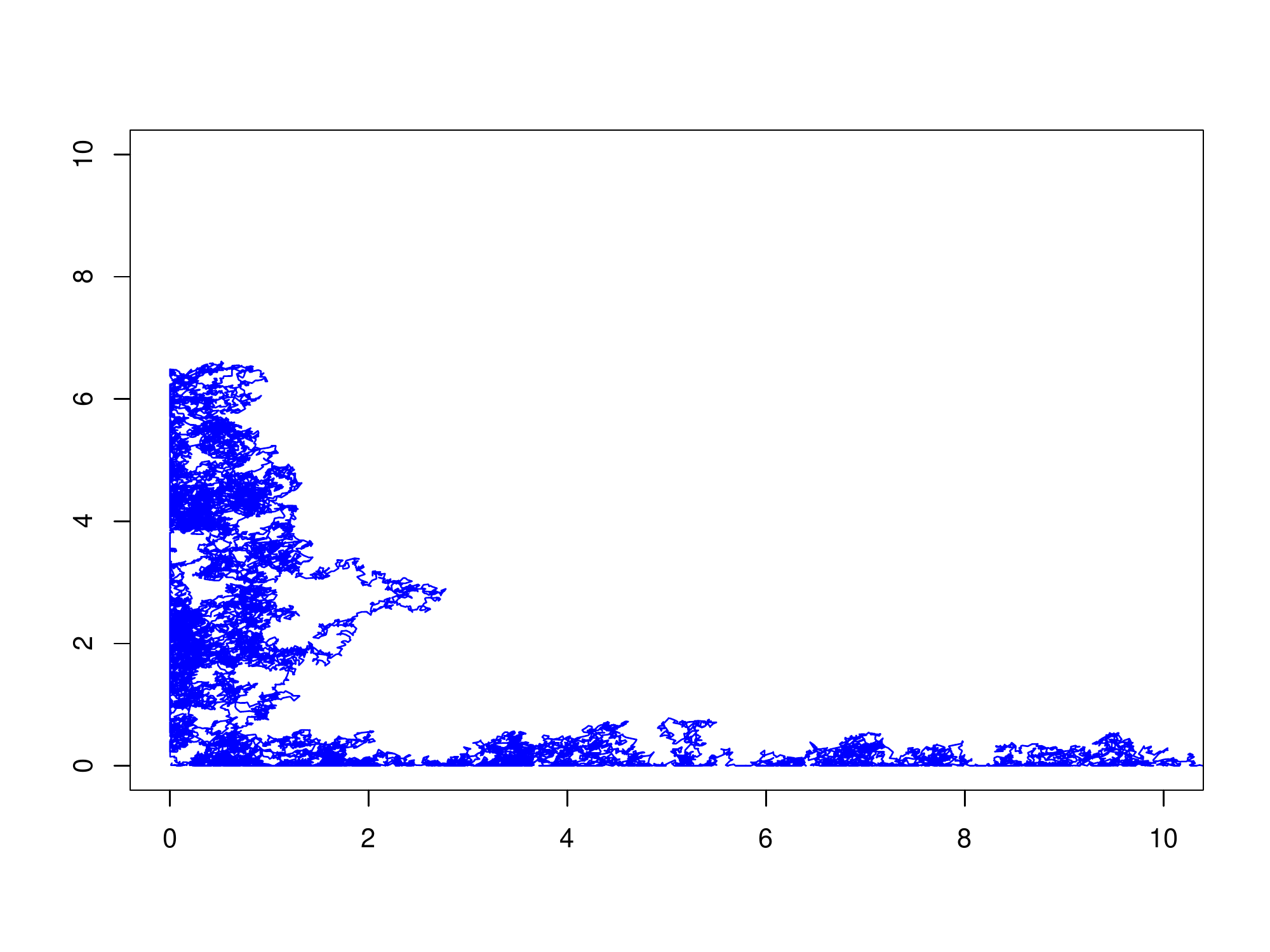}
\caption{Reflected Brownian motion started at the origin: domination of the first component.}
\label{fig:domination}
\end{center}
\end{figure}

It must be noted that the conditions imposed on the reflection angles lead to a non-unique solution of the Skorokhod problem in general, which makes the definition of the model problematic. We resolve this by restricting our attention to certain subclasses of bivariate L\'evy processes. Firstly, the model in the Brownian case is defined by~\cite{taylor_williams}, where the authors also showed its uniqueness in law and derived some important properties. Secondly, a simple iterative construction can be applied if one of the components of the free process does not become negative immediately. In particular, this allows for a compound Poisson process, where each component has a positive linear drift and only negative jumps (cf.\ the classical Cram\'er--Lundberg model in risk insurance). We stress that non-uniqueness and the particular implementation of reflection at the origin has no or little effect on our results. Furthermore, we formulate the domination and approximation results in such a way that other models can be added easily upon verification of some basic properties.

Additionally, we identify the Laplace transform of the probability that the first entity wins by totally dominating the second in two important special cases:
\begin{itemize}
\item[(i)]
the aforementioned compound Poisson model with independent components and negative exponential jumps;

\item[(ii)]
the correlated Brownian model.
\end{itemize}
Firstly, we derive a so-called kernel equation in case (i) additionally allowing for common jumps (shocks), and then obtain a kernel equation in case (ii) via approximation, relying on the theory developed below. While in case (ii) the kernel has already been studied in~\cite{franceschi_2019} for different equations/problems, in case (i) we have a completely new analytic problem. Even though our kernel equations resemble the one in~\cite{ivanovs_boxma}, the Wiener--Hopf methods from there seem not to be applicable in the current setting.

The kernel equations are solved by reducing them to the Carleman boundary value problem (BVP) following the general scheme presented in the classical monograph~\cite{fayolle_random_2017}. This method initially proposed in the seventies \cite{fayolle_two_1979,malysev_analytic_1972} has been used to study random walks in the quadrant, their invariant measures and Green functions \cite{kourkova_random_2011,kurkova_martin_1998}, and some related queueing models \cite{baccelli_analysis_1987}. This approach has also been fruitful in the continuous setting for computing the stationary distribution of a reflected Brownian motion in the quadrant \cite{franceschi_2019}. Our solutions are given in terms of a single contour integral along a half-circle in case (i), see Theorem~\ref{thm:BVPsolution}, and along a half-hyperbola in case (ii), see Theorem~\ref{thm:explicitF1Brown}. Furthermore, we obtain the probability of domination when starting at the origin and also derive some asymptotic results.

The paper is organized as follows. The model is defined in Section~\ref{sec:def}, and a basic result concerning the total domination probabilities is proven in Section~\ref{sec:dom}. The approximation result and its proof, relying on the uniform law of large numbers for L\'evy processes, are given in Section~\ref{sec:approx}. The kernel equations for models (i) and (ii) are derived in Section~\ref{sec:kernel} from the one for the Poissonian model with common shocks. With regard to the latter equation, we only summarize the basic steps, whereas the corresponding lengthy and tedious calculations are presented in Appendix~\ref{sec:derivation}. We solve the kernel equation for the Poissonian model (i) in Section~\ref{sec:poi} and for the Brownian model (ii) in Section~\ref{sec:BM}. Finally, numerical illustrations are provided in Section~\ref{sec:num}.

\section{Definition of the model}
\label{sec:def}
Consider a probability space with filtration $\mcF_t$ and let $X(t)=(X_1(t),X_2(t))$, $t\ges 0$, be an adapted bivariate L\'evy process, that is, a process with stationary and independent increments which is continuous in probability; without loss of generality, we assume that it has c\`adl\`ag paths without fixed jumps (e.g., see~\cite[Theorem~15.1]{kallenberg}). Our main examples will be a correlated Brownian motion and a drifted compound Poisson process, whose two components may exhibit both individual and common jumps.

\subsection{Skorokhod problem}
A bivariate process $Y\ges 0$ is a solution to the Skorokhod problem~\cite{williams_survey}, also known as the dynamic complementarity problem, if  the following holds a.s.:
\begin{equation}
\label{eq:definition}
\begin{split}
Y_1(t)=u+X_1(t)+L_1(t)+r_2L_2(t),\\
Y_2(t)=v+X_2(t)+r_1L_1(t)+L_2(t),
\end{split}
\end{equation}
where $(u,v)$ is the starting position with $u,v\ges 0$, and $L_i$ are the regulators (cumulative capital injections) satisfying
\begin{enumerate}[label=(\roman*)]
\item
\label{L1}
$L_i(t)$ are non-decreasing with $L_i(0)=0$,

\item
\label{L2}
$L_i(t)$ increases only when $Y_i(t)=0$, i.e., $\int_0^\infty Y_i(s)\D L_i(s)=0$.
\end{enumerate}
It is assumed that all the processes are adapted to the given filtration. The second condition concerns minimality of injections, meaning that no injections are received unless strictly necessary; in particular, we have
\begin{equation*}
\begin{gathered}
L_1(t)=\sup_{0\les s\les t} \left[-u-X_1(s)-r_2L_2(s)\right]\vee 0,\\
L_2(t)=\sup_{0\les s\les t} \left[-v-X_2(s)-r_1L_1(s)\right]\vee 0.
\end{gathered}
\end{equation*}

Differently to the classical setting, we assume that
\begin{equation}
\label{eq:r}
r_1,r_2>0\qquad \text{ and }\qquad r_1r_2>1.
\end{equation}
The corresponding reflection matrix $\left(\begin{smallmatrix} 1 & r_2\\ r_1 & 1\end{smallmatrix}\right)$ belongs to the so-called completely-$\mcS$ class and thus our Skorokhod problem has a solution in the sample-path sense~\cite{kozyakin1993absolute}. Uniqueness, however, is not guaranteed, leading to certain measurability issues for general processes, see~\cite{bernard_el_kharroubi,williams_survey}. Nevertheless, in the Brownian case there is a unique weak solution~\cite{taylor_williams}. Moreover,~\cite{williams_inv} establishes an invariance property allowing to retrieve the Brownian model as a weak limit of approximations on compact time intervals.

\subsection{Iterative definition and linear complementarity problem}
\label{sec:iterative}
To define the reflected process for a more general $X$, we need to recall an important dichotomy for one-dimensional L\'evy processes: the probability of immediate entrance into the negative half-line $(-\infty,0)$ is either 0 or~1. In the first case the entrance time is strictly positive and the main example is a process of bounded variation on compacts with a positive linear drift~\cite[Proposition~VI.11]{bertoin}.

Coming back to the bivariate process $X$, we assume that at least one of its components enters $(-\infty,0)$ at a strictly positive time. Without loss of generality, we assume that $X_2$ is such, and let $T_k$, $k\ges 1$, be the random times when $X_2$ (or, equivalently, $v+X_2$) updates its infimum; for convenience, we also set $T_0=0$. It is clear that if
\[
v+X_2(T_{k-1})+r_1L_1(T_{k-1})+L_2(T_{k-1})\ges 0,
\]
then, since $r_1>0$, we also have
\[
v+X_2(t)+r_1L_1(t)+L_2(T_{k-1})\ges 0,\quad T_{k-1}\les t<T_k;
\]
besides, $T_k\to\infty$ a.s., $k\to\infty$. Therefore, in order to obtain the reflected process $Y$ on $[0,+\infty)$, we just need to define it on the intervals $[T_{k-1},T_k)$, $k\ges 1$, keeping $L_2$ constant on them.

To this end, we set $L_2(t)=0$ for $t<T_1$, and then define $Y$ on $[0,T_1)$ by reflecting $u+X_1+r_2L_2$ in the one-dimensional sense up to $T_1$, i.e.~taking
\[
L_1(t)=-\inf_{0\les s\les t} [0\wedge (u+X_1(s))],\quad t<T_1.
\]
Then, loosely speaking, at the moment $T_1$ we solve the corresponding linear complementarity problem, reset $Y$ accordingly and proceed from there, repeating the procedure. More precisely, at each epoch $T_k$, which is a stopping time, we let
\[
x_i=Y_i(T_k-)+\Delta X_i(T_k),\quad i=1,2,
\]
where $\Delta X_i(T_k)=X_i(T_k)-X_i(T_k-)$, and solve the linear complementarity problem for this $x=(x_1,x_2)\in\mbR^2$:
\begin{equation}
\label{eq:linear}
y_1=x_1+\ell_1+r_2\ell_2,\qquad y_2=x_2+\ell_2+r_1\ell_1,
\end{equation}
where $y_i,\ell_i\ges 0$ and $\ell_iy_i=0$. Then we set $Y_i(T_k)=y_i$, $L_i(T_k)=L_i(T_k-)+\ell_i$, and proceed as if $Y(T_k)=(y_1,y_2)$ were the starting position instead of $(u,v)$ and $X(T_k+\cdot)-X(T_k)$ were the free process instead of $X$, whereas we let $L$ accumulate the needed future injections.

Thus defined processes $Y$, $L_1$ and $L_2$ are clearly adapted to the given filtration and satisfy~\eqref{eq:definition} together with \ref{L1} and \ref{L2}. Besides, if both $X_1$ and $X_2$ enter $(-\infty,0)$ at a strictly positive time, then $L_1$ and $L_2$ are piecewise constant and do not depend on the initial choice of the component of $X$ for which $T_k$'s are constructed.

However, it turns out that the static problem~\eqref{eq:linear} can have multiple solutions for certain~$(x_1,x_2)<0$. In principle, any of these can be used, and one may even pick a solution in an $\mcF_{T_k}$-measurable random way. However, we choose one specific solution, which we are now going to describe.

In the static problem~\eqref{eq:linear}, $x_i\ges 0$ necessarily implies that $\ell_i=0$. In particular, $x_1,x_2\ges 0$ yields $y_i=x_i$ (no adjustment). Furthermore, if $x_1<r_2x_2\wedge 0$, then $y_1=0$ and $y_2=x_2-r_1x_1$, whereas if $x_2<r_1x_1\wedge 0$, then $y_1=x_1-r_2x_2$ and $y_2=0$. The final case concerns the wedge:
\[
x_1,x_2<0,\quad x_1\ges r_2x_2,\quad x_2\ges r_1x_1,
\]
see also Figure~\ref{fig:cases}. Here we have three solutions (two on the boundary):
\begin{itemize}
\item[(i)]
$y_1=y_2=0$,

\item[(ii)]
$y_1=x_1-r_2x_2,\quad y_2=0$,

\item[(iii)]
$y_1=0,\quad y_2=x_2-r_1x_1$.
\end{itemize}
In the following we pick (i) for concreteness, which resets both components to $0$ when ambiguity arises. It is noted that this particular choice has no or little effect on our results, which we also stress in the following.

\begin{figure}[ht!]
\begin{center}
\begin{tikzpicture}
\fill[color=blue!50,opacity=0.25] (0,0)--(-3,-1)--(-3,-3)--(2,-3)--(2,0);
\fill[color=red!50,opacity=0.25] (0,0)--(-2,-3)--(-3,-3)--(-3,2)--(0,2);
\draw (-3,0)--(0,0);
\draw (0,-3)--(0,0);
\draw[thick] (2,0)--(0,0)--(0,2);
\node at (1,1) {$\mbR_+^2$};
\node[above] at (-2.5,0) {$x_1$};
\node[right] at (0,-2.5) {$x_2$};
\draw[blue!50] (0,0)--(-3,-1);
\draw[red!50] (0,0)--(-2,-3);
\draw[->,red] (-1.5,-0.25) to [out=60,in=180] (0,0.5);
\draw[->,blue] (-0.5,-1.5) to [out=30,in=-90] (0.5,0);
\node[above,red] at (-1,0.5) {$x_2-r_1x_1$};
\node[below,blue] at (1.3,0) {$x_1-r_2x_2$};
\end{tikzpicture}
\caption{Solutions to linear complementarity problem~\eqref{eq:linear}. The blue half-line corresponds to $x_1=r_2x_2<0$ and the red to $x_2=r_1x_1<0$. The red region results in $y_1=0$ and the blue region in $y_2=0$. The wedge corresponds to three solutions and for concreteness we choose $y_1=y_2=0$ there.}
\label{fig:cases}
\end{center}
\end{figure}
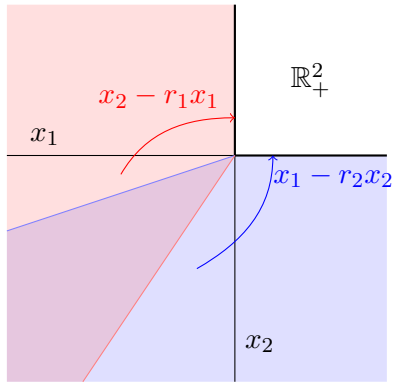

Finally, it is worth mentioning that in a similar way one can construct the reflected process for the sum of a Brownian motion and an arbitrary independent compound Poisson process, where between the jumps the model evolves as a reflected Brownian motion and at jump epochs we again solve~\eqref{eq:linear}.

\subsection{Basic properties}
Here we observe some basic properties of the reflected process. Firstly, note that the regulator does not increase when the free process is non-negative:
\begin{equation}
\label{eq:lzero}
\forall\, t\in [0,T]\colon\quad u+X_1(t)\ges 0\qquad \Longrightarrow\qquad L_1(T)=0,
\end{equation}
since from~\eqref{eq:definition} we then have $Y_1(t)\ges L_1(t)$ and thus $\int_0^T L_1(t)\D L_1(t)=0$. In such a case $L_2(t)=(-\inf_{0\les s\les t} [v+X_2(s)])^+$ and the expressions for $Y_1$ and $Y_2$ are straightforward. Unlike the classical case, however, non-uniqueness presents some problems: if $L_1(T)=0$ yields a non-negative solution (and even $Y_1$ may be strictly positive on $[0,T]$), then we cannot conclude that this is the right solution.

Importantly,
\begin{equation}
\label{eq:strong}
\text{$Y$ is strong Markov},
\end{equation}
so that for any finite stopping time $\tau$, conditional on $Y(\tau)=(u',v')$, the process $Y'(t)=Y(\tau+t)$ is independent of $\mcF_\tau$ and has the original law when started at $(u',v')$. In the Brownian case this is a consequence of the strong Feller property shown in~\cite{taylor_williams}, and in the case of the iterative construction of~\S\ref{sec:iterative} this property is obviously inherited from the process~$X$. Note, however, that the choice in~\eqref{eq:linear} must not depend on the future evolution of the process.

Finally we comment on rescaling of the model. For any $a_1,a_2>0$  by setting
\begin{equation}
\label{eq:rescalign}
X'_i(t)=a_iX_i(t),\quad u'=a_1u,\quad v'=a_2 v,\quad r_1'=\dfrac{a_2}{a_1}r_1,\quad r_2'=\dfrac{a_1}{a_2}r_2,
\end{equation}
we find that $Y'_i(t)=a_iY_i(t)$ with $L'_i(t)=a_iL_i(t)$ being a solution of~\eqref{eq:definition}. Furthermore, we resolve non-uniqueness in~\S\ref{sec:iterative} in a consistent way implying $Y_i'(t)=a_i Y_i(t)$. Thus, the probability of total domination defined in~\S\ref{sec:dom} is invariant under any such scaling given that the initial position is scaled appropriately.

\section{Domination}
\label{sec:dom}
\subsection{The result}
We assume throughout this paper that $X$ is a bivariate L\'evy process (with c\`adl\`ag paths) such that
\begin{gather}
\label{A1}
\tag{A1}
\e X(1)=\mu=(\mu_1,\mu_2)<0,\\
\label{A2}
\tag{A2}
r_1\abs*{\mu_1}>\abs*{\mu_2},\qquad r_2\abs*{\mu_2}>\abs*{\mu_1},
\end{gather}
where the latter implies~\eqref{eq:r}. Furthermore, we assume that the reflected process $Y$ is well-defined in the sense that it satisfies~\eqref{eq:definition} and~\eqref{eq:strong}. It is noted that in the Brownian case the above conditions imply that $Y$ is transient~\cite{hobson_rogers}, but more is true as we show in the following.

\begin{figure}[ht!]
\begin{center}
\begin{tikzpicture}[scale=1.3]
\draw[->] (0,0)--(2.5,0);
\draw[->] (0,0)--(0,2);
\draw[dashed] (1,0)--(1.5,0.25);
\draw[dashed] (0,1)--(0.5,1.25);
\draw[->,thick] (1.7,1.5) --(1.1,1.2);
\draw[->,thick,magenta] (1,0) -- (1.7,0.15);
\draw[->,thick,magenta] (0,1) --(0.25,1.5);
\node[above] at (1.7,1.5) {$(\mu_1,\mu_2)$};
\node[above,magenta] at (1.7,0.25) {$(r_2,1)$};
\node[above,magenta] at (0.5,1.5) {$(1,r_1)$};
\end{tikzpicture}
\caption{Reflection vectors and the mean.}
\label{fig:reflection_angles}
\end{center}
\end{figure}
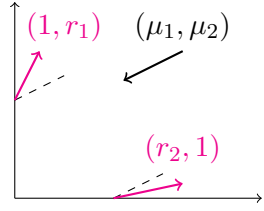

An additional technical assumption is needed to exclude certain degenerate cases:
\begin{equation}
\label{A3}
\tag{A3}
\p\{\exists\, t>0\colon\; X_i(t)>0,\; X_j(t)=\ulX_j(t)\}>0,\quad (i,j)=(1,2),\; (2,1),
\end{equation}
where $\ulX_j(t)\coloneqq\inf_{0\les s\les t} X_j(s)$. This condition is not minimal possible, but we avoid further technicalities since it is broadly satisfied. Importantly, for the Brownian model it is sufficient to assume that its correlation $\rho$ is not $1$. For the compound Poisson model with positive linear drift $c$ it is sufficient to assume that both components may exhibit individual negative jumps. As an example not satisfying~\eqref{A3} consider jumps distributed as $(\Delta_1,\Delta_2)$, where $\Delta_i<0$ and $\p(\Delta_1/\Delta_2\ges c_1/c_2)$ is either~$1$ or~$0$. It should be mentioned that such models with ordered jumps have been used, for example, in~\cite{badila2014queues}, because they allow for simpler analysis in various settings.

Our focus is on the probabilities $p_i=p_i(u,v)$ of total domination starting from $(u,v)$ which is defined by
\begin{gather*}
p_1(u,v)=\p_{(u,v)}\left\lbrace Y_1(t)\to\infty,\; \dfrac{Y_2(t)}{Y_1(t)}\to 0\right\rbrace,\\
p_2(u,v)=\p_{(u,v)}\left\lbrace Y_2(t)\to\infty,\; \dfrac{Y_1(t)}{Y_2(t)}\to 0\right\rbrace.
\end{gather*}

The following result shows that total domination is certain, and each component can be the dominant one for any starting position.

\begin{thm}[Total domination probabilities]
\label{thm:domination}
Under conditions~\eqref{A1}, \eqref{A2} and~\eqref{A3} we have for any $(u,v)\in\mbR^2_+$:
\[
p_1(u,v)\in (0,1)\qquad\text{ and }\qquad p_1(u,v)+p_2(u,v)=1.
\]
Moreover, $\lim\limits_{u\to\infty} p_1(u,v)=1$ and $\lim\limits_{v\to\infty} p_1(u,v)=0$.
\end{thm}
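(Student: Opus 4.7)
The strategy combines a drift analysis on the two boundary axes with a strong-Markov restart argument powered by~\eqref{A3}. Rewriting~\eqref{A2} as $\mu_1-r_2\mu_2>0$ and $\mu_2-r_1\mu_1>0$ identifies the asymptotic behaviour on each axis. On $\{Y_2=0,\,Y_1>0\}$ the regulator $L_1$ is inactive, and the identity $v+X_2+L_2=0$ forces $L_2$ to be the one-dimensional Skorokhod regulator of $v+X_2$; by the strong law of large numbers this grows like $|\mu_2|t$, so substituting in $Y_1$ yields the ballistic rate $Y_1(t)/t\to\mu_1-r_2\mu_2>0$. An analogous statement holds on the $y$-axis with rate $\mu_2-r_1\mu_1>0$.

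The first substantive step is to promote this heuristic to: starting from $(u,0)$ with $u$ large, with probability close to $1$ the reflected process never leaves the $x$-axis and $Y_1(t)\to\infty$, so in particular $p_1(u,0)\to 1$ as $u\to\infty$. Take $L_1\equiv 0$, let $L_2$ be the one-dimensional Skorokhod regulator of $v+X_2$, and set $Y_1=u+X_1+r_2L_2$, $Y_2=v+X_2+L_2$; a uniform law of large numbers for $X$ (the tool that underlies Section~\ref{sec:approx}) controls $\inf_{t\ges 0}(X_1(t)+r_2L_2(t))$ and shows that $Y_1$ stays strictly positive for all $t\ges 0$ with probability arbitrarily close to $1$ once $u$ is large. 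When this happens, the tuple $(Y_1,Y_2,L_1,L_2)$ satisfies~\eqref{eq:definition} together with~\ref{L1}--\ref{L2}, hence is the reflected process under the constructions of~\S\ref{sec:iterative} or in the weak sense of~\cite{taylor_williams}.

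To pass to a general starting position $(u,v)$ I would combine the strong Markov property~\eqref{eq:strong} with~\eqref{A3}: the latter produces, with positive probability, a finite-time path of $X$ along which the reflected process reaches $\{Y_2=0,\,Y_1\ges N\}$ for any prescribed $N$, and analogously for the other axis. Together with the previous step this yields $p_1(u,v),\,p_2(u,v)>0$ for every $(u,v)$, and a Borel--Cantelli/strong-Markov iteration rules out the process returning infinitely often to compact sets without eventually entering one of the two escape regimes, giving $p_1+p_2=1$. The monotone limits then follow from $p_1(u,0)\to 1$ via a further coupling: starting from $(u,v)$ with $u$ very large, the reflected process hits the $x$-axis with high probability at a state where the first coordinate is still large, and strong Markov transfers the conclusion; $\lim_{v\to\infty}p_1(u,v)=0$ is symmetric.

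The principal obstacle is the ``stay on the axis forever'' claim in the second paragraph: obtaining a uniform-in-time tail bound on $\inf_{t\ges 0}(X_1(t)+r_2L_2(t))$ rather than just a finite-horizon one, so that occasional large excursions of $X_1$ cannot drive $Y_1$ to $0$ and flip the process to the other axis. A quantitative invariance principle is one clean route; an alternative Foster--Lyapunov argument with test function $V(y)=y_1-Cy_2$ for sufficiently large $C>0$ should also work but requires drift bookkeeping both on the axes and in the interior, which I expect to be the most delicate part of the verification.
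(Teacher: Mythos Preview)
Your plan is correct and matches the paper's proof: your second paragraph is Lemma~\ref{lem:domination_limit}, your third is the regeneration argument of Lemma~\ref{lem:remote}, and the combination via strong Markov is exactly the sandwich~\eqref{eq:sandwich}. The obstacle you flag is resolved just as you suggest, by a two-regime split---on $[0,T]$ tightness of $\underline{X}_1(T)$ fixes $u_0$, and on $[T,\infty)$ the uniform law of large numbers yields $Y_1(t)\ges(u-r_2v)+ct$ directly; the paper in fact carries this out for all $u\ges(r_2v)\vee u_0$ at once, so the limit $\lim_{u\to\infty}p_1(u,v)=1$ falls out without your extra coupling step.
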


The proof of this result is based on two lemmas and an observation that $Y$ visits the boundary infinitely often. Firstly, we employ a regeneration argument to show that $Y$ hits the remote parts of the quadrant boundary almost surely. Secondly, when starting in those remote parts the process $Y$ has the claimed behaviour with high probability, which follows from the strong law of large numbers and some basic properties underlying~\eqref{eq:definition}.

\subsection{Proofs}
By the law of large numbers, we have
\begin{equation}
\label{eq:LLN}
\dfrac{X_i(t)}{t}\cas\mu_i,\quad t\to\infty,\quad i=1,2;
\end{equation}
see, e.g.,~\cite[Theorem~36.5]{sato}. This implies that if conditions~\eqref{A1} and~\eqref{A2} are satisfied, then the reflected stochastic process $Y$ hits the boundary $\partial\mbR_+^2$ infinitely often:
\[
\sup\{t\ges 0\colon\; Y_1(t)\wedge Y_2(t)=0\}=\infty\quad\as
\]
Indeed, suppose that $\tau=\sup\{t\ges 0\colon\; Y_1(t)=0\}<\infty$. By definition, we have $Y_1(t)>0$, $t>\tau$, and so $L_1(t)=L_1(t\wedge\tau)$, $t\ges 0$. Therefore, using~\eqref{eq:LLN}, we obtain
\[
\lim_{t\to\infty} \dfrac{L_2(t)}{t}=\lim_{t\to\infty} \dfrac{1}{t}\sup_{0\les s\les t} (-v-X_2(s)-r_1L_1(s\wedge\tau))^+=-\mu_2>0\quad\as,
\]
which implies that $\sup\{t\ges 0\colon\; Y_2(t)=0\}=\infty$.

However, if condition~\eqref{A3} is also fulfilled, then a stronger assertion holds true; namely, the reflected process hits the remote parts of the boundary $\partial\mbR_+^2$ almost surely.

\begin{lem}
\label{lem:remote}
Assume conditions~\eqref{A1}, \eqref{A2} and~\eqref{A3}, and for any $h>0$ define two disjoint sets
\[
D^1_h=\{(x,0)\colon\; x\ges h\},\qquad D^2_h=\{(0,y)\colon\; y\ges h\}.
\]
Then for any fixed $(u,v)\in\mbR_+^2$ and all $h>0$ the stochastic process $Y$ satisfies:
\begin{align*}
&\p_{(u,v)}\left\lbrace\exists\, t\ges 0\colon\; Y(t)\in D_h^1\cup D_h^2\right\rbrace=1,\\
&\p_{(u,v)}\left\lbrace\exists\, t\ges 0\colon\; Y(t)\in D_h^i\right\rbrace>0,\quad i=1,2.
\end{align*}
\end{lem}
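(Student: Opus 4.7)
The plan is to first establish the positive-probability claim for each $D_h^i$, and then upgrade it to the almost-sure hitting of $D_h^1\cup D_h^2$ by combining with the already-observed fact (in the paragraph preceding the lemma) that $Y$ visits $\partial\mbR_+^2$ infinitely often.

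For the positive-probability claim I focus on $D_h^1$, with the case $D_h^2$ following by symmetry. The key is to exhibit a favourable sample-path event
\begin{equation*}
E\coloneqq\{\exists\, t>0\colon\; u+X_1(s)\ges 0\ \forall\, s\in[0,t],\; X_2(t)=\ulX_2(t)\les-v-h/r_2\}.
\end{equation*}
On $E$, property~\eqref{eq:lzero} and the iterative construction of \S\ref{sec:iterative} force $L_1(t)=0$ and $L_2(t)=-v-X_2(t)$, hence $Y_2(t)=0$ and $Y_1(t)=u+X_1(t)+r_2(-v-X_2(t))\ges h$, giving $Y(t)\in D_h^1$. To show $\p(E)>0$, I would invoke~\eqref{A3}: it supplies a single ``downward ladder'' event of $X_2$ at which $X_1>0$ concurrently, of positive probability. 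Iterating such events via the strong Markov property of $X$ applied at successive descending ladder stopping times of $X_2$ makes $\ulX_2$ as deep as $-v-h/r_2$ with positive probability; throughout the (bounded) time horizon one further restricts $u+X_1$ to a narrow positive band, which has positive probability by standard L\'evy-process estimates. In the Brownian case with $u=0$, where $X_1$ cannot remain nonnegative on any positive-length interval, one first reduces to an interior starting position by applying the strong Markov property~\eqref{eq:strong} at a small time $\ve>0$.

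For the almost-sure claim, let $\tau_n\uparrow\infty$ be a sequence of stopping times at which $Y$ visits $\partial\mbR_+^2$, as provided by the infinite-visit property. If $\limsup_n\abs{Y(\tau_n)}=\infty$, then some $Y(\tau_n)$ already lies in $D_h^1\cup D_h^2$. Otherwise $\{Y(\tau_n)\}$ stays in a compact subset of $\partial\mbR_+^2$, and a uniform-in-starting-position version of the positive-probability claim (obtained by a Feller-type continuity argument combined with compactness) together with the strong Markov property~\eqref{eq:strong} and a Borel--Cantelli argument force $Y$ to enter $D_h^1\cup D_h^2$ almost surely.

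The principal obstacle is the iteration step in the positive-probability claim: concatenating the~\eqref{A3} events at successive ladder epochs of $X_2$ so that $\ulX_2$ becomes as deep as needed, while simultaneously keeping $u+X_1$ nonnegative throughout the relevant time horizon. This requires a careful joint choice of stopping times and tube-type constraints on $X_1$, with slightly different treatments in the compound-Poisson model (where $X_1$ can stay strictly positive for a nontrivial initial time) and the Brownian model (where the reduction at a small positive time becomes necessary).
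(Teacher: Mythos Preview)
Your overall outline is reasonable, but the almost-sure part has a real gap. You reduce to a uniform-in-starting-position lower bound on the hitting probability of $D_h^1\cup D_h^2$ over a compact piece of $\partial\mbR_+^2$, and justify it by a ``Feller-type continuity argument''. No such continuity is available at this point of the paper: the strong Feller property is only cited for the Brownian model, and any continuity of hitting probabilities for the iterative construction would have to be proven separately (invoking the later invariance principle would be circular, since Theorem~\ref{thm:domination} feeds into it). Moreover, the compact set you land in contains the origin, precisely where your own event $E$ breaks down in the Brownian case; the detour ``first move to an interior point at time $\ve$'' gives a random new starting position and does not yield a uniform lower bound. Without this uniformity, the Borel--Cantelli step does not go through.

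The paper circumvents this entirely by reversing the order and using an increment-based bootstrap. It first fixes a small $\delta>0$ and manufactures, from~\eqref{A3} and the strong Markov property of $X$, an event of positive probability $c$ depending only on the increments:
\[
\bigl\{\exists\, t\in(0,T]\colon\ X_1(t)>1,\ \ulX_1(t)>-\delta,\ X_2(t)=\ulX_2(t)<0\bigr\}.
\]
Starting from any point of $D_\delta^1$ this event forces a visit to $D_{\delta+1}^1$, giving a bound that is automatically uniform over $D_\delta^1$ without any continuity argument. The almost-sure claim is then an induction on $\delta$: if $Y$ visits $D_\delta^1\cup D_\delta^2$ infinitely often, a regeneration at the successive visits to one side (at least one side is visited infinitely often) shows it also visits $D_{\delta+1}^1\cup D_{\delta+1}^2$. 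The base case---ruling out that only the origin is visited infinitely often---is handled by the same regeneration idea applied at the origin. The positive-probability statement for each $D_h^i$ is then obtained as a short corollary, rather than proven first. If you want to keep your order of proof, you should replace the continuity appeal by an increment-based event of this kind, so that the lower bound is uniform by construction.
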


\begin{proof}
Note that the law of large numbers~\eqref{eq:LLN} and condition~\eqref{A3} imply that the paths of $X_1$ and $X_2$ take both positive and negative values, and so are not monotone functions with probability one. Besides, by condition~\eqref{A3} we have
\[
\p\left\lbrace\exists\, t>0\colon\; X_1(t)>0,\; X_2(t)=\ulX_2(t)\right\rbrace>0.
\]
Furthermore, since $\mu_2<0$, we can add $\ulX_2(t)<0$ into this probability to get
\begin{equation}
\label{eq:temp_q}
\p\left\lbrace\exists\, t>0\colon\; X_1(t)>0,\; X_2(t)=\ulX_2(t)<0\right\rbrace>0.
\end{equation}

Fixing any $\delta>0$, we note that since $X_1$ is not non-increasing it can become arbitrarily large before becoming~$\les -\delta$. Thus, using the strong Markov property and applying~\eqref{eq:temp_q} sufficiently many times, we obtain
\[
\p\left\lbrace\exists\, t>0\colon\; X_1(t)>1,\; \ulX_1(t)>-\delta,\; X_2(t)= \ulX_2(t)<0\right\rbrace>0,
\]
and hence for some $T>0$
\begin{equation}
\label{eq:q}
c=\p\left\lbrace\exists\, t\in (0,T]\colon\; X_1(t)>1,\; \ulX_1(t)>-\delta,\; X_2(t)= \ulX_2(t)<0\right\rbrace>0.
\end{equation}

As was shown at the beginning of this subsection, the stochastic process $Y$ visits the boundary of $\mbR_+^2$ infinitely often. Assume that for some $\delta>0$ the process $Y$ visits $D_\delta^1\cup D_\delta^2$ infinitely often. Let us show, using a regeneration argument, that the same is then true for $\delta'=\delta+1$. Consider an increasing sequence of stopping times $\tau_1,\tau_2,\ldots$ defined as the successive visits of the set $D_\delta^1$ with at least $T$ time units in between:
\begin{gather*}
\tau_1=\inf\{t\ges 0\mid Y(t)\in D_\delta^1\},\\
\tau_{i+1}=\inf\{t\ges\tau_i+T\mid Y(t)\in D_\delta^1\},\quad i\ges 1.
\end{gather*}
For each $i$ such that $\tau_i<\infty$, let $u_i=Y_1(\tau_i)$, and consider the probability that $Y$ hits $D^1_{u_i+1}$ in $[\tau_i,\tau_i+T]$, but before $Y_1$ becomes less than or equal to $u_i-\delta$, which will mean that it hits $D^1_{\delta+1}$. This probability is constant for all $i$ and is given by~\eqref{eq:q}. Hence, the probability of not visiting $D^1_{\delta+1}$ is upper bounded by $(1-c)^{N_1}$, where $N_1$ is the number of $\tau_i<\infty$. The same is true for the other direction. Since at least one of $N_1,N_2$ is infinite, this implies that visiting $D^1_{\delta+1}\cup D^2_{\delta+1}$ is certain.

Also, we note that if only the origin is visited infinitely often, then we may apply a similar regeneration argument at the origin to get a contradiction. Therefore, the above argument proves the first claim.

To prove the second statement, we note that the probability of hitting the boundary at a point other than the origin is positive. Firstly, $Y_1$ must be positive, since $X_1$ is not non-increasing. But for a positive $u$ we may again apply~\eqref{eq:q}, showing that hitting the ray $(x,0)$, $x>0$, is possible. This also shows that hitting $D^1_h$ for any $h>0$ and any starting position $(u,v)$ occurs with positive probability. A similar argument holds for the other component, which completes the proof.
\end{proof}

The following lemma shows that if the initial capital of one of the companies is sufficiently large, then this company will dominate with probability close to one. Its proof is based on the law of large numbers~\eqref{eq:LLN} for L\'evy processes.

\begin{lem}
\label{lem:domination_limit}
If conditions~\eqref{A1} and~\eqref{A2} are satisfied, then for any $\ve>0$ there exists $u_0=u_0(\ve)\ges 0$ such that
\[
p_1(u,v)\ges 1-\ve
\]
for all $v\ges 0$ and $u\ges (r_2v)\vee u_0$. Also, a similar assertion holds true for $p_2$.
\end{lem}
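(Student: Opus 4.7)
The plan is to exhibit a large event on which the regulator $L_1$ is identically zero, decoupling the Skorokhod problem: $Y_2$ becomes the one-dimensional reflection of $v + X_2$ at zero, and $Y_1 = u + X_1 + r_2 L_2$. The asymptotic domination then follows from the law of large numbers~\eqref{eq:LLN}.

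Using~\eqref{A2}, I fix $\alpha \in (\abs*{\mu_1}/\abs*{\mu_2},\, r_2)$ and consider the auxiliary process $N(t) := X_1(t) - \alpha\, \ulX_2(t)$. Since $\ulX_2(t)/t \cas \mu_2$ follows from~\eqref{eq:LLN}, one has $N(t)/t \cas \mu_1 - \alpha\mu_2 > 0$, hence $N(t) \to +\infty$ and $J := -\inf_{t \ges 0} N(t)$ is finite almost surely. Introducing the candidate regulator $\hat{L}_2(t) := (-v - \ulX_2(t))^+$ and candidate first component $\hat{Y}_1(t) := u + X_1(t) + r_2 \hat{L}_2(t)$, a short case split on whether $-\ulX_2(t) \les v$ (using $\hat{L}_2 = 0$ together with $X_1 = N + \alpha \ulX_2$) or $-\ulX_2(t) > v$ (writing $X_1 - r_2 \ulX_2 = N + (r_2 - \alpha)(-\ulX_2)$) yields the key bound
\[
\hat{Y}_1(t) \ges (u - \alpha v) + N(t), \qquad t \ges 0.
\]

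I would then choose $u_0 = u_0(\ve)$ with $\p\{J \les u_0(1 - \alpha/r_2)\} \ges 1 - \ve$. For $u \ges (r_2 v) \vee u_0$ the hypothesis $u \ges r_2 v$ forces $u - \alpha v \ges u(1 - \alpha/r_2) \ges u_0(1 - \alpha/r_2)$, so on the event $\{J \les u_0(1 - \alpha/r_2)\}$ the estimate above gives $\hat{Y}_1(t) > 0$ for all $t$, and $(\hat{Y}_1,\, v + X_2 + \hat{L}_2,\, 0,\, \hat{L}_2)$ is a legitimate Skorokhod solution which one argues coincides with the model's $Y$. A second application of~\eqref{eq:LLN} yields $L_2(t)/t \to -\mu_2$ and thus $Y_1(t)/t \to \mu_1 - r_2\mu_2 > 0$; for $t$ large enough that $-\ulX_2(t) > v$, $Y_2(t) = X_2(t) - \ulX_2(t) = \oh(t)$. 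Consequently $Y_1(t) \to \infty$ and $Y_2(t)/Y_1(t) \to 0$, yielding $p_1(u,v) \ges 1 - \ve$; the statement for $p_2$ follows by the same argument with indices swapped.

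The principal subtlety lies in identifying the candidate tuple with the process actually selected by the model, since the Skorokhod problem under~\eqref{eq:r} is not uniquely solvable in general. On the constructed event, however, the minimality condition~\ref{L2} forces $L_1 \equiv 0$, so both the iterative construction of~\S\ref{sec:iterative} (for which $L_1$ is never triggered) and the weak uniqueness of~\cite{taylor_williams} in the Brownian case pick out exactly this solution.
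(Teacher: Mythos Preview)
Your approach shares the paper's idea of finding a large event on which $L_1\equiv 0$, and your bound $\hat Y_1(t)\ges(u-\alpha v)+N(t)$ via the auxiliary process $N=X_1-\alpha\,\ulX_2$ is an elegant alternative to the paper's split into $[0,T]$ and $[T,\infty)$. The gap is in the identification step. Your claim that ``the minimality condition~\ref{L2} forces $L_1\equiv 0$'' is precisely what the paper warns against in \S2.3: even when setting $L_1=0$ yields a valid Skorokhod solution with $\hat Y_1$ strictly positive, one cannot conclude this is the model's solution. Concretely, if a common shock at time $\tau$ sends $x=(Y_1(\tau-)+\Delta X_1(\tau),\,Y_2(\tau-)+\Delta X_2(\tau))$ into the wedge of Figure~\ref{fig:cases} --- which only requires $x_1<0$ and $x_2\ges r_1x_1$, neither excluded by your event --- the iterative construction of \S\ref{sec:iterative} sets $y_1=y_2=0$ with $\ell_1>0$, whereas your candidate has $\ell_1=0$ and $y_1=x_1-r_2x_2=\hat Y_1(\tau)>0$. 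Both are legitimate linear-complementarity solutions; the model selects the former. Invoking weak uniqueness in the Brownian case is also not quite enough as stated, since your $\hat Y$ is a solution only on the good event, not globally.

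The paper sidesteps this by never constructing a candidate. It works with the given $Y$, sets $\tau=\inf\{t\ges 0:L_1(t)>0\}$, and bounds $Y_1(\tau)\ges u+X_1(\tau)+r_2L_2(\tau-)$ using only $L_1(\tau)\ges 0$ and monotonicity of $L_2$. The right-hand side is then controlled by the pointwise LLN estimate $X_1(\tau)\ges-(\abs{\mu_1}+\ve)\tau$ together with $L_2(\tau-)\ges(\abs{\mu_2}-\ve)\tau-v$, yielding $Y_1(\tau)>0$ regardless of any jump at~$\tau$; condition~\ref{L2} then forces $\Delta L_1(\tau)=0$ and the contradiction follows. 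Your inequality $\hat Y_1(\tau)>0$ instead uses the \emph{post}-jump regulator $\hat L_2(\tau)$, and the increment $r_2\,\Delta\hat L_2(\tau)$ is exactly what may be missing from the model's $Y_1(\tau)$. For continuous $X$ (in particular the Brownian case) your bound does feed into this $\tau$ argument, since then $\hat L_2(\tau)=\hat L_2(\tau-)$; but for the lemma in its stated generality the paper's direct pointwise control of $X_1$ is what makes the argument go through.
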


\begin{proof}
We note that~\eqref{eq:LLN} implies
\[
\sup_{t\ges T} \dfrac{X_i(t)}{t}\cip\mu_i,\quad T\to\infty,\quad i=1,2.
\]
Therefore, fixing arbitrarily small $\ve>0$ (more precisely, we will later need that $\ve<\ve_0$, where $\ve_0=(r_2\abs*{\mu_2}-\abs*{\mu_1})/(2r_2+2)>0$), we can choose $T=T(\ve)>0$ such that
\[
\p\left\lbrace\sup_{t\ges T} \max_{i=1,2} \abs*{\dfrac{X_i(t)}{t}-\mu_i}<\ve\right\rbrace= \p\left\lbrace\max_{i=1,2} \sup_{t\ges T} \abs*{\dfrac{X_i(t)}{t}-\mu_i}<\ve\right\rbrace\ges 1-\dfrac{\ve}{2}.
\]
Then we have
\begin{equation}
\label{eq:estimate}
(\mu_i-\ve)t<X_i(t)<(\mu_i+\ve)t,\quad t\ges T,\quad i=1,2,
\end{equation}
with probability not less than $1-\ve/2$. Also, let $u_0>0$ be so large that
\[
\p(-\ulX_1(T)<u_0)\ges 1-\ve/2.
\]
In the rest of the proof we focus on the intersection of these two events, which has probability not less than $1-\ve$.

Now, fix arbitrary $v\ges 0$ and $u\ges (r_2v)\vee u_0$, consider the random time
\[
\tau=\inf\{t\ges 0 \mid L_1(t)>0\}\ges T,
\]
and let us show that actually $\tau=\infty$. Indeed, we first note that if $\tau=T$, then $L_1(\tau)=0$, because
\[
Y_1(\tau)=Y_1(T)\ges u+X_1(T)\ges u_0-(-\ulX_1(T))>0.
\]
Moreover, if $\tau>T$, then, by the definition of $\tau$, for any $T\les t<\tau$ we have $L_1(t)=0$ and, using~\eqref{eq:estimate}, obtain
\begin{equation}
\begin{gathered}
L_2(t)\ges\sup_{0\les s\les t} (-v-X_2(s)-r_1L_1(s))^+=\sup_{0\les s\les t} (-v-X_2(s))^+\ges\\
\ges\sup_{T\les s\les t} (-v-X_2(s))^+\ges (\abs*{\mu_2}-\ve)t-v.
\end{gathered}
\end{equation}
Hence, for such $t$ we have
\[
Y_1(t)\ges u+X_1(t)+r_2L_2(t)\ges u-(\abs*{\mu_1}+\ve)t+r_2(\abs*{\mu_2}-\ve)t-r_2v\ges (u-r_2v)+ct>0,
\]
where $c=(r_2\abs*{\mu_2}-\abs*{\mu_1})/2>0$. Furthermore, the fact that $X_1(\tau)\ges -(\abs*{\mu_1}+\ve)\tau$ and $L_2$ is monotone implies that this bound also holds true for $t=\tau$.

Therefore, in both cases we have $Y_1(\tau)>0$. Owing to the right continuity of $X_1$ and monotonicity of $L_1$ and $L_2$, we have $Y_1(t)>0$ for any $t\in (\tau,\tau+\delta)$ with sufficiently small $\delta>0$. This means that $L_1(t)=0$ for $t\in (\tau,\tau+\delta)$, which contradicts the definition of $\tau$.

Thus, we conclude that for $u\ges (r_2v)\vee u_0$ the stochastic process $Y_1$ stays positive at all times. So, for all $t\ges 0$ we have
\begin{gather*}
Y_1(t)=u+X_1(t)+r_2L_2(t),\qquad Y_2(t)=v+X_2(t)+L_2(t),\\
L_2(t)=\sup_{0\les s\les t} (-v-X_2(s))^+.
\end{gather*}
It is easy to check that
\[
\lim_{t\to\infty} \dfrac{L_2(t)}{t}=\abs*{\mu_2},\qquad \lim_{t\to\infty} \dfrac{Y_1(t)}{t}= r_2\abs*{\mu_2}-\abs*{\mu_1}>0,\qquad \lim_{t\to\infty} \dfrac{Y_2(t)}{t}=0.
\]
Hence, the event of interest is ensured with probability not less than $1-\ve$.

The same argument is valid for the corresponding assertion with $p_2$.
\end{proof}

\begin{proof}[Proof of Theorem~\ref{thm:domination}]
Fix arbitrary $u,v\ges 0$. For any $\ve>0$ choose $u_0=u_0(\ve)>0$ and $v_0=v_0(\ve)>0$ as in Lemma~\ref{lem:domination_limit}, set $h=u_0\vee v_0$, and consider
\[
\tau_1=\inf\left\lbrace t\ges 0\colon\; Y(t)\in D^1_h\right\rbrace,\qquad \tau_2=\inf\left\lbrace t\ges 0\colon\; Y(t)\in D^2_h\right\rbrace,
\]
which are stopping times with respect to the given filtration.

By Lemma~\ref{lem:remote} the event $\{\tau_1<\infty\}$ has positive probability, and on this event the shifted process $X'(t)=X(\tau_1+t)-X(\tau_1)$ has the same law as the original L\'evy process and is independent of the corresponding position $Y(\tau_1)\in D^1_{u_0}$ (see~\cite[Proposition~I.6]{bertoin}). Therefore, noting that
\[
p_1(u,v)=\p_{(u,v)}\left\lbrace\tau_1<\infty,\; Y_1(t)\to\infty,\; \dfrac{Y_2(t)}{Y_1(t)}\to 0\right\rbrace,
\]
we obtain, by Lemma~\ref{lem:domination_limit},
\begin{gather*}
p_1(u,v)=\e_{(u,v)}\left[\indf\{\tau_1<\infty\}\cdot\p_{Y(\tau_1)}\left\lbrace Y_1(t)\to\infty,\; \dfrac{Y_2(t)}{Y_1(t)}\to 0\right\rbrace\right]\\
\ges (1-\ve)\cdot\p_{(u,v)}\{\tau_1<\infty\},
\end{gather*}
and so
\begin{equation}
\label{eq:sandwich}
(1-\ve)\cdot\p_{(u,v)}\{\tau_1<\infty\}\les p_1(u,v)\les\p_{(u,v)}\{\tau_1<\infty\}.
\end{equation}
Similar bounds hold true for $p_2(u,v)$ and $\tau_2$. Hence, according to Lemma~\ref{lem:remote}, both $p_1$ and $p_2$ are positive, which proves the first assertion, and also $p_1+p_2\ges 1-\ve$, which, due to the arbitrariness of $\ve$, implies the second assertion.
\end{proof}

\section{Approximation}
\label{sec:approx}
\subsection{Assumptions}
Throughout this section we consider a sequence of bivariate L\'evy processes $X^{(n)}$ converging weakly to $X$ with respect to the Skorokhod $J_1$-topology~\cite[\S3.3]{whitt}. This is equivalent to
\begin{equation}
\label{eq:convd}
\tag{C1}
X^{(n)}(1)\cid X(1),
\end{equation}
or to the convergence of the L\'evy exponents~\cite[Theorem~15.17]{kallenberg}. Furthermore, we assume that also the means converge:
\begin{equation}
\label{eq:convmean}
\tag{C2}
\mu^{(n)}=\e X^{(n)}(1)\to\e X(1)=\mu,
\end{equation}
which is equivalent, in view of~\eqref{eq:convd}, to the uniform integrability of $X^{(n)}(1)$.

It is assumed that the reflected processes $Y$ and $Y^{(n)}$ are well-defined, so that they satisfy~\eqref{eq:definition} and~\eqref{eq:strong}. Now we may expect that
\begin{equation}
\label{eq:convY}
\tag{C3}
Y^{(n)}\cid Y\qquad \text{whenever}\quad \mbR_+^2\ni (u^{(n)},v^{(n)})\to (u,v),
\end{equation}
which is indeed broadly satisfied for our models, including the case when $Y$ is a reflected Brownian motion as shown by~\cite{williams_inv}. Nevertheless, some exceptions exist as we now describe. The degenerate case is given by a drifted compound Poisson process with linear drifts $c_i>0$ and jumps distributed as $(J_1,J_2)$, where
\begin{equation}
\label{eq:degenerate}
c_i=r_jc_j\text{ and }J_i-r_jJ_j\text{ has a point mass}
\end{equation}
for some $i\in\{1,2\}$ and $j\neq i$.

\begin{lem}[Convergence of reflected processes]
\label{lem:degenerate}
The convergence in~\eqref{eq:convd} implies~\eqref{eq:convY} in the following cases:
\begin{itemize}
\item
$Y$ is a reflected Brownian motion and~\eqref{eq:convmean} holds,

\item
$Y,Y^{(n)}$ are defined in~\S\ref{sec:iterative}, apart from the case where $X$ is a drifted compound Poisson process satisfying~\eqref{eq:degenerate}.
\end{itemize}
\end{lem}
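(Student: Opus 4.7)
The plan is to treat the two cases in turn. For the Brownian case, the conclusion is essentially already proved by Williams in \cite{williams_inv}: conditions \eqref{eq:convd} and \eqref{eq:convmean} together imply convergence of the L\'evy triplets of $X^{(n)}$ to those of the Brownian motion $X$, hence $X^{(n)}\cid X$ in the $J_1$-topology on every compact interval. Since the reflection matrix lies in the completely-$\mcS$ class under \eqref{eq:r}, the invariance theorem of \cite{williams_inv} yields weak convergence of the associated semimartingale reflecting Brownian motions for any convergent sequence of starting positions, which is exactly \eqref{eq:convY}.

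For the iterative case the idea is to invoke Skorokhod's representation theorem so that we may assume $X^{(n)}\to X$ a.s.\ in $J_1$ on every compact time interval, and then show that the pathwise construction of \S\ref{sec:iterative} is a.s.\ continuous at $X$; the continuous mapping theorem will then give $Y^{(n)}\to Y$ a.s.\ in $J_1$, hence the desired weak convergence. The construction interleaves three pathwise operations: (a) identification of the reset epochs $T_k$ at which $X_2$ updates its infimum; (b) one-dimensional Skorokhod reflection of $X_1+r_2L_2$ on each interval $[T_{k-1},T_k)$; and (c) solution of the static linear complementarity problem \eqref{eq:linear} at each $T_k$. Operation (b) is the classical one-dimensional Skorokhod map, which is Lipschitz in the supremum norm and hence $J_1$-continuous. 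Operation (a) is a.s.\ continuous at paths that attain strict new infima at each $T_k$, which is generic in the compound Poisson setting (infima of $X_2$ are attained only at jumps, and adjacent descending jumps a.s.\ give distinct infimum values). Operation (c) is continuous on $\mbR^2$ except on the critical rays $\{x_1=r_2x_2<0\}$ and $\{x_2=r_1x_1<0\}$, that is, on the boundary of the ambiguous wedge in Figure~\ref{fig:cases}.

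The heart of the argument is to show that, outside the degenerate case \eqref{eq:degenerate}, the LCP input $Y(T_k-)+\Delta X(T_k)$ a.s.\ avoids these rays for every $k$. In the pure drifted compound Poisson setting $\Delta X(T_k)=(J_1,J_2)$ is a jump independent of the past, and membership in $\{x_1=r_2x_2\}$ becomes the linear equation $J_1-r_2J_2=r_2Y_2(T_k-)-Y_1(T_k-)$. The right-hand side is $\mcF_{T_k-}$-measurable and, barring the parallel-drift condition $c_1=r_2c_2$, has a continuous distribution as an affine function of the random inter-reset duration (with slope $c_1-r_2c_2$). Conditioning on the history and using independence of the jump, the only way to obtain a positive coincidence probability is to have both $c_1=r_2c_2$ and an atom in the law of $J_1-r_2J_2$, which is precisely \eqref{eq:degenerate} with $(i,j)=(1,2)$; the symmetric analysis handles the other ray. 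Adding an independent Brownian component only injects atomless noise and makes the argument easier. Ruling out \eqref{eq:degenerate} therefore guarantees a.s.\ continuity of (c) at the relevant inputs, completing the continuous-mapping step.

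The principal obstacle in this program is the careful bookkeeping of the distribution of $Y(T_k-)$ after repeated resets at the origin: in the degenerate case, atoms in this distribution can align with jump atoms to produce positive probability of landing on the LCP discontinuity set, and this is exactly the failure mode captured by \eqref{eq:degenerate}.
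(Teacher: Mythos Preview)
Your proposal is correct and follows essentially the same strategy as the paper: invoke \cite{williams_inv} for the Brownian limit, and for the iterative construction establish a.s.\ continuity of the pathwise reflection map by showing that the LCP input at each reset epoch avoids the wedge boundary a.s.\ unless \eqref{eq:degenerate} holds. The paper's wedge-avoidance argument differs in detail---it replaces $T_1$ by an independent exponential time and, for $X$ not compound Poisson, uses the absolute continuity of the law of $X_t$ (via \cite[Theorem~27.4]{sato}) to rule out mass on the critical lines---whereas you argue directly from the linear relation $J_1-r_2J_2=r_2Y_2(T_k-)-Y_1(T_k-)$ and atomlessness of one side or the other; note also that the right-hand side is affine in the final inter-jump interval before $T_k$, not in the full inter-reset duration as you wrote, but this does not affect the conclusion.
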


\begin{proof}
The first statement is a consequence of~\cite[Theorem~4.1 and Proposition~4.2(III)]{williams_inv}, where uniform integrability and martingale property readily follow from~\eqref{eq:convmean}.

Next, we consider the iterative construction of the reflected process, and recall that the one-dimensional reflection is a continuous map~\cite[\S 13.5]{whitt}. It is important that we resolve non-uniqueness of~\eqref{eq:linear} in the same way for all processes; recall that we have chosen to restart the processes from the origin if ambiguity arises. Our reflection map is then continuous at sample paths requiring finitely many iterations and not hitting the boundary of the wedge right before the application of linear complementarity, see Figure~\ref{fig:cases}. It is thus sufficient to show that the boundary of the wedge is not hit at the time $T_1$ in the construction of the limit process~$Y$ with probability~$1$.

Suppose that this occurs with positive probability. Since the jumps of $X$ below some negative threshold are independent, we see that $Y(T_1-)$ must have a mass on some line parallel to one of the wedge boundaries. Furthermore, we may replace $T_1$ by an independent exponential time. Assume for a moment that $X$ is not compound Poisson, in which case the distribution of $X_t$ for any $t>0$ is continuous~\cite[Theorem~27.4]{sato}. Ignoring the reflection we easily derive a contradiction by taking $t$ small and projecting $X$ onto the perpendicular direction. This argument can be extended to the case when $X_1$ does not spend time at the boundary (the Lebesgue measure is~$0$). In the only other case we may look at $X_2-r_1X_1$ to get the contradiction. Finally, assume that $X$ is a compound Poisson. The only possibility here is that included into~\eqref{eq:degenerate}.
\end{proof}

\subsection{The result and its proof}
Let us now state the approximation result for the domination probabilities. In fact, we show continuous convergence in the sense that perturbations in the initial positions are also allowed. Importantly, \eqref{eq:convY} is equivalent to convergence of the reflected process on compact intervals of time, and thus convergence of the limiting quantities is not obvious.

\begin{thm}[Invariance principle]
\label{thm:dom_prob_conv}
Assume that $X$ satisfies conditions of Theorem~\ref{thm:domination}, and let $X^{(n)}$ be a sequence of bivariate L\'evy processes approximating $X$ so that~\eqref{eq:convd}, \eqref{eq:convmean} and~\eqref{eq:convY} hold. Then
\[
\lim_{n\to\infty} p_i^{(n)}(u^{(n)},v^{(n)})=p_i(u,v),\qquad i=1,2,
\]
whenever $\mbR_+^2\ni(u^{(n)},v^{(n)})\to (u,v)$. In particular, $p_i$ are continuous for such~$X$.
\end{thm}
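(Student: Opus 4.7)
The strategy is to approximate the infinite-horizon event $\{Y_1(t) \to \infty,\; Y_2(t)/Y_1(t) \to 0\}$ by a finite-horizon hitting event, and then transfer this through the compact-interval convergence~\eqref{eq:convY}. Fix $\varepsilon > 0$. Lemma~\ref{lem:domination_limit} applied to $X$ yields a threshold $u_0 = u_0(\varepsilon) > 0$ such that $p_1 \ges 1-\varepsilon$ on the closed cone $A = \{(u,v) \in \mbR_+^2: u \ges (r_2 v) \vee u_0\}$, and symmetrically $p_2 \ges 1-\varepsilon$ on $B$ (with $A \cap B = \emptyset$, thanks to $r_1 r_2 > 1$). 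The first delicate point is to argue that the \emph{same} threshold works for every $Y^{(n)}$ with $n$ large enough. Inspecting the proof of Lemma~\ref{lem:domination_limit}, its only probabilistic inputs are uniform LLN-type bounds on $\sup_{t \ges T}\abs{X_i(t)/t - \mu_i}$ and on $\ulX_1(T)$; these transfer from $X$ to the sequence $X^{(n)}$ via~\eqref{eq:convd}--\eqref{eq:convmean} by a uniform law of large numbers for L\'evy processes (the tool advertised in the abstract for Section~\ref{sec:approx}).

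Next, I invoke the strong Markov property~\eqref{eq:strong} at $\tau := \inf\{t \ges 0: Y(t) \in A \cup B\}$, which is finite almost surely by Lemma~\ref{lem:remote}, and analogously at $\tau^{(n)}$ for $Y^{(n)}$. Since $(u^{(n)}, v^{(n)}) \to (u,v)$ ranges in a bounded set, I can pick $T = T(\varepsilon)$ with $\p_{(u,v)}\{\tau > T\} < \varepsilon$ and, by applying the Portmanteau theorem to a suitable open enlargement of $A \cup B$ under~\eqref{eq:convY}, ensure the analogous bound holds for $\tau^{(n)}$ uniformly in $n$ large. Writing $p_1(u,v) = \e[p_1(Y(\tau))\indf_{Y(\tau) \in A}] + \e[p_1(Y(\tau))\indf_{Y(\tau) \in B}]$ and applying the first step produces the sandwich
\[
(1-\varepsilon)\,\p_{(u,v)}\bigl\{\tau \les T,\; Y(\tau) \in A\bigr\} \;\les\; p_1(u,v) \;\les\; \p_{(u,v)}\bigl\{\tau \les T,\; Y(\tau) \in A\bigr\} + 2\varepsilon,
\]
and the same sandwich for $p_1^{(n)}(u^{(n)}, v^{(n)})$ with identical constants.

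It then remains to establish
\[
\p_{(u^{(n)},v^{(n)})}\bigl\{\tau^{(n)} \les T,\; Y^{(n)}(\tau^{(n)}) \in A\bigr\} \;\longrightarrow\; \p_{(u,v)}\bigl\{\tau \les T,\; Y(\tau) \in A\bigr\},
\]
which I expect to be the main obstacle: hitting times of closed sets in the Skorokhod $J_1$-topology are only upper semicontinuous, so~\eqref{eq:convY} alone does not yield the limit directly. The plan is the standard open/closed sandwich---replace $A$ and $B$ by strict open subsets and by strict closed thickenings, apply Portmanteau to each of the resulting hitting-time events, and let the thickness shrink---where the two bounds agree in the limit provided $Y$ almost surely does not hit $A \cup B$ for the first time on the boundary $\partial A \cup \partial B$. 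I would verify this non-pathological-hitting property from continuity of the marginal laws of $Y(t)$ for $t > 0$ in the Brownian case (via~\cite{taylor_williams}), and in the iterative construction of~\S\ref{sec:iterative} by an argument modelled on the proof of Lemma~\ref{lem:degenerate}. Letting $\varepsilon \to 0$ finishes the convergence $p_1^{(n)}(u^{(n)},v^{(n)}) \to p_1(u,v)$; the claim for $p_2$ is symmetric, and continuity of $p_i$ follows by specialising to $X^{(n)} \equiv X$ with perturbed initial conditions.
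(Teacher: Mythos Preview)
Your outline matches the paper's in its broad strokes---uniform threshold $u_0$ via a uniform LLN (this is exactly the paper's Lemma~\ref{lem:uLLN}), a sandwich from Lemma~\ref{lem:domination_limit}, and a finite-horizon hitting event to which~\eqref{eq:convY} applies. The divergence, and the gap in your argument, is at precisely the step you flag as ``the main obstacle.''

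The paper never establishes the two-sided convergence
\[
\p_{(u^{(n)},v^{(n)})}\bigl\{\tau^{(n)}\les T,\;Y^{(n)}(\tau^{(n)})\in A\bigr\}\;\to\;\p_{(u,v)}\bigl\{\tau\les T,\;Y(\tau)\in A\bigr\},
\]
and in particular it never verifies that $Y$ a.s.\ avoids first hitting $\partial A\cup\partial B$. Instead it uses two \emph{different} target sets: for the limit process it records hitting of the smaller set $D^1_{u_0+1}\subset\partial\mbR_+^2$, while for the approximations it records hitting of the larger wedge $D'_{u_0}=A$. Since $D^1_{u_0+1}$ sits inside an open subset of $D'_{u_0}$, the weak convergence~\eqref{eq:convY} on $[0,T]$ gives only the one-sided bound $p_1^{(n)}(T)>p_1(T)-\ve$, which via the two sandwiches yields only $p_1(u,v)-p_1^{(n)}(u^{(n)},v^{(n)})<3\ve$. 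The reverse inequality is then obtained \emph{not} from any Portmanteau argument but from the complementarity relation: the same one-sided bound for $i=2$, together with $p_1+p_2=1$ (Theorem~\ref{thm:domination}) and $p_1^{(n)}+p_2^{(n)}\les 1$, forces $p_1(u,v)-p_1^{(n)}(u^{(n)},v^{(n)})>-3\ve$.

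Your plan to verify the non-pathological-hitting property is not obviously workable in the iterative setting: for a compound Poisson process with drift, $Y$ can very well enter $A$ for the first time by continuous drift through $\partial A$, and whether a slight shrinkage of $A$ still captures such entries depends on the drift direction relative to the boundary, the reflection dynamics near the axes, and the subsequent jump---none of which is addressed by the wedge-boundary argument of Lemma~\ref{lem:degenerate} that you invoke. The paper's asymmetric-sets-plus-complementarity trick bypasses all of this, and is the missing idea in your proposal.
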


The main ingredient of the proof is the following uniform law of large numbers for L\'evy processes.

\begin{lem}
\label{lem:uLLN}
Let $X,X^{(n)}$ be bivariate L\'evy processes satisfying~\eqref{eq:convd} and \eqref{eq:convmean}. Then
\begin{equation}
\label{eq:ULLN}
\lim_{T\to\infty} \limsup_{n\to\infty} \p\left\lbrace\sup_{t\ges T} \max_{i=1,2} \abs*{\dfrac{X_i^{(n)}(t)}{t}-\mu_i}>\ve\right\rbrace=0
\end{equation}
for any $\ve>0$.
\end{lem}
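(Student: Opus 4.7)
The plan is to reduce to a uniform $L^1$ law of large numbers for the centered processes, via a dyadic peeling together with Doob's maximal inequality. First I would center: put $M^{(n)}_i(t) = X^{(n)}_i(t) - \mu^{(n)}_i t$, a zero-mean L\'evy martingale. Since $\mu^{(n)} \to \mu$ by \eqref{eq:convmean}, for all $n$ large we have $|\mu^{(n)}_i - \mu_i| < \ve/2$, and a union bound over $i = 1, 2$ reduces the claim to
\[
\lim_{T\to\infty}\limsup_{n\to\infty}\p\Bigl\{\sup_{t\ges T}|M^{(n)}_i(t)|/t > \ve/2\Bigr\} = 0, \qquad i = 1, 2.
\]

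Next, I would decompose $[T,\infty) = \bigsqcup_{k\ges 0}[T\cdot 2^k, T\cdot 2^{k+1}]$. On the event that the supremum exceeds $\ve/2$ there is some $k$ with $\sup_{s\les T 2^{k+1}}|M^{(n)}_i(s)| > (\ve/2)\cdot T 2^k$; applying Doob's $L^1$ maximal inequality to the submartingale $|M^{(n)}_i|$ on each piece and summing yields
\[
\p\Bigl\{\sup_{t\ges T}|M^{(n)}_i(t)|/t > \ve/2\Bigr\} \les \sum_{k\ges 0}\frac{2\,\e|M^{(n)}_i(T\cdot 2^{k+1})|}{\ve\,T\cdot 2^k}.
\]
Hence it suffices to establish the uniform $L^1$ LLN $\sup_n \e|M^{(n)}_i(t)|/t \to 0$ as $t \to \infty$, since that, combined with a geometric summation, makes the right-hand side tend to $0$ as $T \to \infty$ uniformly in $n$.

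For the uniform $L^1$ LLN I would exploit that \eqref{eq:convd} together with \eqref{eq:convmean} implies uniform integrability of $\{X^{(n)}_i(1)\}_n$ (by Vitali), and hence of $\{M^{(n)}_i(1)\}_n$. Truncate the jumps at level $K$: write $M^{(n)}_i = A^{(n),K}_i + B^{(n),K}_i$, where $A^{(n),K}_i$ collects the Gaussian part together with compensated jumps of absolute value $\les K$ (a square-integrable martingale with $\var A^{(n),K}_i(t) = t \cdot s^{(n),K}_i$, where $\sup_n s^{(n),K}_i < \infty$ for each fixed $K$ by convergence of L\'evy triplets coming from \eqref{eq:convd}), while $B^{(n),K}_i$ is the compensated compound-Poisson of jumps of size $>K$. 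Cauchy--Schwarz gives $\e|A^{(n),K}_i(t)|/t \les \sqrt{s^{(n),K}_i/t} \to 0$, while the elementary bound
\[
\e|B^{(n),K}_i(t)|/t \les 2\int_{|x|>K}|x|\,\Pi^{(n)}_i(\D x)
\]
can be made uniformly small in $n$ as $K\to\infty$. Choosing first $K$ large and then $t$ large yields the required uniform convergence.

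The main obstacle I foresee is translating uniform integrability of the marginals $\{X^{(n)}_i(1)\}_n$ into the uniform tail bound $\sup_n \int_{|x|>K}|x|\,\Pi^{(n)}_i(\D x) \to 0$ on the L\'evy measures, since only the law of $X^{(n)}(1)$ is a priori controlled. I would handle this via a one-big-jump comparison of the form $\Pi^{(n)}(\{|x|>K\}) \les C\,\p\{|X^{(n)}_i(1)| > K/2\}$ for $K$ large, boosted to a first-moment version, together with the convergence of L\'evy triplets that follows from \eqref{eq:convd}--\eqref{eq:convmean}; this lets one upgrade marginal UI to the required uniform tail integrability of the L\'evy measures.
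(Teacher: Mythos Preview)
The proposal has a genuine gap in the dyadic peeling step. You claim that once the uniform $L^1$ LLN $\sup_n \e|M_i^{(n)}(t)|/t \to 0$ is established, a ``geometric summation'' makes the bound
\[
\sum_{k \ges 0} \frac{2\,\e|M_i^{(n)}(T\cdot 2^{k+1})|}{\ve\, T\cdot 2^k}
= \frac{4}{\ve}\sum_{k \ges 0} \frac{\e|M_i^{(n)}(T\cdot 2^{k+1})|}{T\cdot 2^{k+1}}
\]
tend to zero. But the terms of this series are not geometrically decaying; they are merely $o(1)$ in $k$, and such a sum need not be finite. Your own decomposition exposes the problem: for the big-jump part $\e|B_i^{(n),K}(t)| \les 2t\int_{|x|>K}|x|\,\Pi_i^{(n)}(\D x)$ is \emph{linear} in $t$, so each term equals $\tfrac{8}{\ve}\int_{|x|>K}|x|\,\Pi_i^{(n)}(\D x)$, independent of $k$, and the series diverges. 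The square-root bound rescues the $A$-part of the sum, but nothing rescues the $B$-part---choosing $K$ large only makes each of infinitely many equal terms small, which is not enough.

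The paper avoids the summation altogether by noting that $t\mapsto X(t)/t$ (after centering) is a \emph{reverse} martingale with respect to $\mcG_{-t}=\sigma(X(t+s),\,s\ges 0)$. Doob's maximal inequality then controls the whole ray $[T,\infty)$ at once:
\[
\p\Bigl\{\sup_{t\ges T}\bigl|X^{(n)}(t)/t\bigr|\ges\ve\Bigr\}\les \frac{\e|X^{(n)}(T)|}{\ve\,T},
\]
so one only needs $\lim_{T\to\infty}\limsup_n \e|X^{(n)}(T)|/T=0$. For each fixed $T$, uniform integrability of $\{X^{(n)}(1)\}_n$ (from \eqref{eq:convd} and \eqref{eq:convmean}) propagates to $\{X^{(n)}(T)\}_n$, giving $\e|X^{(n)}(T)|\to\e|X(T)|$; then $\e|X(T)|/T\to 0$ by the ordinary $L^1$ LLN for the limit process. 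This also eliminates the need for the L\'evy-measure tail estimate you flag as the main obstacle.
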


\begin{proof}
Without loss of generality, we consider the one-dimensional case and assume that  $\mu=0$. Let us show that the stochastic process $\{M_{-t}=X(t)/t,\; t>0\}$ is a martingale with respect to the filtration $\mcG_{-t}=\sigma\left\lbrace X(t+s),\; s\ges 0\right\rbrace$, i.e., that for any $t>0$ and $s\ges 0$
\begin{equation}
\label{eq:martingale}
\e\left[\left.\dfrac{X(t)}{t}\right| X(t+s)\right]=\dfrac{X(t+s)}{t+s}.
\end{equation}
By the right continuity of the sample paths, it is sufficient to take $t=m(t+s)/n$ for some integers $m\les n$. However, it is a standard fact that for i.i.d. $Z_i$ with finite first moment we have the identity
\[
\e[Z_1+\cdots+Z_m \mid Z_1+\cdots+Z_n]=\dfrac{m}{n}(Z_1+\cdots+Z_n),
\]
and taking $Z_i=X(i(t+s)/n)-X((i-1)(t+s)/n)$ we get~\eqref{eq:martingale}.

Now, by Doob's martingale inequality~\cite[Proposition~7.15]{kallenberg}, for any $T'>T$ we have
\begin{equation}
\label{eq:martingale_inequality}
\p\left\lbrace\sup_{t\in [T,T']} \dfrac{\abs*{X(t)}}{t} \ges\ve\right\rbrace\les \dfrac{1}{\ve}\cdot\dfrac{\e\abs*{X(T)}}{T},
\end{equation}
which, by passing to the limit, readily extends to the infinite time interval $[T,\infty)$.

Thus, to prove~\eqref{eq:ULLN}, it is sufficient to show that
\[
\lim_{T\to\infty} \limsup_{n\to\infty} \dfrac{\e\abs*{X^{(n)}(T)}}{T}=0.
\]
However, for a fixed $T$ we have $X^{(n)}(T)\cid X(T)$ as $n\to\infty$, which implies the convergence of the mean absolute values, because the families $X^{(n)}(1)$, $n\ges 1$, and thus also $X^{(n)}(T)$, $n\ges 1$, are uniformly integrable. Finally, from~\eqref{eq:martingale_inequality} with the infinite time interval $[T,\infty)$, it is easy to deduce that the family $\abs*{X(t)}/t$, $t\ges T$, is uniformly integrable, and so $\e\abs*{X(T)}/T\to 0$ as $T\to\infty$ (see also~\cite[Theorem~36.5]{sato}).
\end{proof}

\begin{proof}[Proof of Theorem~\ref{thm:dom_prob_conv}]
Fix $\ve>0$ and note that the bounds in~\eqref{eq:sandwich} hold for all large~$n$, since then the conditions~\eqref{A1} and~\eqref{A2} are satisfied. Note, however, that $u_0$ there depends on $n$. Nevertheless, we can choose $u_0^{(n)}=u_0$ independently of~$n$, see the proof of Lemma~\ref{lem:domination_limit}. This is so, because we may use the same $T$ according to Lemma~\ref{lem:uLLN}, but then $\underline X_1^{(n)}(T)\cid \underline X_1(T)$. Furthermore, the bounds in Lemma~\ref{lem:domination_limit} are also true if the set $D'_{u_0}=\{(u,v)\in\mbR_+^2\colon\; u\ges (r_2v)\vee u_0\}$ is replaced by $D_{u_0+\delta}^1$ for any $\delta>0$ as defined in Lemma~\ref{lem:remote}.

Let $p_1(T)$ be the probability that $Y$ hits $D_{u_0+1}^1$ on $[0,T]$ starting from $(u,v)$, and let $p_1^{(n)}(T)$ be the probability that $Y^{(n)}$ hits $D'_{u_0}$ on $[0,T]$ starting from $(u^{(n)},v^{(n)})$. We choose $T\ges 0$ so large that
\[
0\les\p_{(u,v)}\{\text{$Y$ hits $D_{u_0+1}^1$}\}-p_1(T)<\ve.
\]
Then, by~\eqref{eq:sandwich},
\[
p_1(u,v)\les\p_{(u,v)}\{\text{$Y$ hits $D_{u_0+1}^1$}\}<p_1(T)+\ve
\]
and
\[
p_1(u,v)\ges (1-\ve)\cdot\p_{(u,v)}\{\text{$Y$ hits $D_{u_0+1}^1$}\}\ges (1-\ve)\cdot p_1(T).
\]
Similarly,
\[
(1-\ve)\cdot p_1^{(n)}(T)\les p_1^{(n)}(u^{(n)},v^{(n)})<p_1^{(n)}(T)+\ve.
\]
By assumption~\eqref{eq:convY}, we have $Y^{(n)}\cid Y$ in $D([0,T])\times D([0,T])$, and so
\[
p^{(n)}_1(T)>p_1(T)-\ve
\]
for all large enough $n$. Therefore, for all large enough $n$ we obtain
\[
p_1(u,v)-p_1^{(n)}(u^{(n)},v^{(n)})<(p_1(T)+\ve)-(p_1(T)-\ve)(1-\ve)<3\ve.
\]
Similarly,
\[
p_2(u,v)-p_2^{(n)}(u^{(n)},v^{(n)})<3\ve,
\]
which, owing to Theorem~\ref{thm:domination} and the inequality $p_1^{(n)}(u^{(n)},v^{(n)})+p_2^{(n)}(u^{(n)},v^{(n)})\les 1$, implies that
\begin{gather*}
3\ve>p_1(u,v)-p_1^{(n)}(u^{(n)},v^{(n)})=1-p_2(u,v)-p_1^{(n)}(u^{(n)},v^{(n)})\ges\\
\ges p_2^{(n)}(u^{(n)},v^{(n)})-p_2(u,v)>-3\ve.
\end{gather*}

Thus, we conclude that $p_1^{(n)}(u^{(n)},v^{(n)})\to p_1(u,v)$, $n\to\infty$.
\end{proof}

\subsection{Poissonian approximation of Brownian motion}
\label{sec:example}
Here we consider an approximation of the correlated Brownian motion via compound Poisson processes that allow both common and individual jumps with exponential distribution. This model may be useful for financial applications.

Let $N,N_1,N_2$ be independent Poisson processes with rates $\lambda,\lambda_1,\lambda_2>0$ respectively, and let $J_k,J^{(1)}_k,J^{(2)}_k$, $k\ges 1$, be independent standard exponential random variables that are also independent of $N,N_1,N_2$. Consider a drifted compound Poisson process $X=(X_1,X_2)$ given by
\begin{equation}
\label{eq:Poissonian_model_definition}
X_i(t)=c_it-\frac{1}{\oq_i}\sum_{k=1}^{N(t)} J_k-\frac{1}{q_i}\sum_{k=1}^{N_i(t)} J^{(i)}_k,\quad i=1,2,
\end{equation}
where $c_i,q_i,\oq_i>0$ are fixed parameters. Note that $\oq_i$ scale the common jumps (shocks), whereas $q_1,q_2$ are the rate parameters of the individual exponential jumps.

The corresponding Laplace exponent $\psi(s_1,s_2)=\log\e e^{s_1 X_1(1)+s_2X_2(1)}$ is given by 
\begin{equation}
\label{eq:psi_common}
\psi(s_1,s_2)=s_1c_1+s_2c_2-(\lambda+\lambda_1+\lambda_2)+ \dfrac{\lambda}{1+s_1/\oq_1+s_2/\oq_1}+\dfrac{\lambda_1}{1+s_1/q_1}+ \dfrac{\lambda_2}{1+s_2/q_2}
\end{equation}
for $s_1,s_2\ges 0$. Differentiating $\psi$ twice, we readily obtain:
\begin{gather*}
\e X_i(1)=c_i-\lambda/\oq_i-\lambda_i/q_i,\\
\var(X_i(1))=2\lambda/\oq_i^2+2\lambda_i/q_i^2,\\
\cov(X_1(1),X_2(1))=2\lambda/(\oq_1\oq_2).
\end{gather*}

\begin{lem}[Approximation of Brownian motion]
\label{lem:approximation}
For any $\sigma_i>0$, $\mu_i\in\mbR$ and $\rho\in [0,1]$ there exist parameters $c_i,q_i,\oq_i,\lambda_i,\lambda>0$ such that
\begin{align*}
\e X_i(1)=\mu_i,\quad\var(X_i(1))=\sigma_i^2,\quad \cov(X_1(1),X_2(1))=\rho\sigma_1\sigma_2.
\end{align*}
This is also true for a drifted compound Poisson process $X^{(n)}$ with parameters  
\begin{equation}
\label{approximation_parameters}
\begin{split}
\lambda^{(n)}=\lambda n,\; \lambda^{(n)}_i=\lambda_in,\;
\oq^{(n)}_i=\oq_i\sqrt{n},\; q_i^{(n)}=q_i\sqrt{n},\\
c^{(n)}_i=\mu_i+(\lambda/\oq_i+\lambda_i/q_i)\sqrt{n},
\end{split}
\end{equation}
and thus defined $X^{(n)}$ converge weakly, as $n\to\infty$, to the Brownian motion with means $\mu_i$, variances $\sigma_i^2$, and correlation~$\rho$.
\end{lem}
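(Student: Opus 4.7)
For the first part of the lemma, the plan is to solve the system of five moment equations
\[
\mu_i = c_i - \lambda/\oq_i - \lambda_i/q_i,\qquad \sigma_i^2 = 2\lambda/\oq_i^2 + 2\lambda_i/q_i^2,\qquad \rho\sigma_1\sigma_2 = 2\lambda/(\oq_1\oq_2)
\]
in the nine positive unknowns, which leaves ample degrees of freedom. A clean choice is to let the common shocks carry the entire covariance: set $\oq_i = 1/\sigma_i$, which forces $\lambda = \rho/2$ and contributes variance $\rho\sigma_i^2$ to each component; the residual $(1-\rho)\sigma_i^2$ is then supplied by the individual jumps via $\lambda_i = (1-\rho)\sigma_i^2 q_i^2/2$, and positivity of $c_i = \mu_i + \rho\sigma_i/2 + q_i(1-\rho)\sigma_i^2/2$ can be achieved by choosing $q_i$ sufficiently large. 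This construction requires $\rho < 1$; the boundary case $\rho = 1$ forces $\lambda_i = 0$ and lies on the edge of the admissible set, but does not affect subsequent applications.

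For the second part, the plan is to verify pointwise convergence of the Laplace exponents $\psi^{(n)} \to \psi_{\mathrm{BM}}$ on compact subsets of $\mbR_+^2$. This implies $X^{(n)}(1) \cid X(1)$ and therefore, via the equivalence recalled around~\eqref{eq:convd} (Kallenberg, Theorem~15.17), weak convergence of the L\'evy processes in the Skorokhod $J_1$-topology. Substituting~\eqref{approximation_parameters} into~\eqref{eq:psi_common} and expanding each fraction $\lambda n / (1 + s_1/(\oq_1\sqrt n) + s_2/(\oq_2\sqrt n))$ and its two analogues to second order in $1/\sqrt n$, one observes a threefold cancellation: the $O(n)$ terms are killed by $-(\lambda + \lambda_1 + \lambda_2)n$; the $O(\sqrt n)$ terms are killed by the $\sqrt n$-portion of $s_1 c_1^{(n)} + s_2 c_2^{(n)}$ — which is exactly what the drift correction in~\eqref{approximation_parameters} is engineered to produce; and the $O(1)$ residue combines, using the moment identities already computed, into $\mu_1 s_1 + \mu_2 s_2 + \tfrac12(\sigma_1^2 s_1^2 + 2\rho\sigma_1\sigma_2 s_1 s_2 + \sigma_2^2 s_2^2)$, which is precisely the Laplace exponent of the target Brownian motion.

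There is no genuinely hard step. Both parts reduce to bookkeeping on elementary algebraic identities and one-line Taylor expansions, with remainders that are $O(n^{-1/2})$ uniformly on compact subsets of $\mbR_+^2$. The only delicate point is the coordination of the three Taylor expansions so that the $O(n)$ and $O(\sqrt n)$ cancellations happen exactly; the specific structure of~\eqref{approximation_parameters} — scaling rates by $n$, jump sizes by $1/\sqrt n$, and shifting the drift by a $\sqrt n$ term matching the mean of the removed jumps — is precisely the classical Poisson-to-diffusion rescaling which makes this coordination automatic.
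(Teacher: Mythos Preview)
Your proposal is correct and follows essentially the same route as the paper: the paper also fixes $\lambda/\oq_i^2=\tfrac12\rho\sigma_i^2$ and $\lambda_i/q_i^2=\tfrac12(1-\rho)\sigma_i^2$ (your choice $\oq_i=1/\sigma_i$, $\lambda=\rho/2$ is a particular instance of this), takes the remaining rates large enough to force $c_i>0$, and then verifies weak convergence by checking $\psi^{(n)}\to\psi_{\mathrm{BM}}$ pointwise and invoking Kallenberg's Theorem~15.17. Your write-up spells out the Taylor-expansion bookkeeping that the paper compresses into ``straightforward calculation''; the edge-case remark at $\rho=1$ (and, symmetrically, at $\rho=0$, where your choice gives $\lambda=0$) is a minor artefact of the strict-positivity requirement in the statement and is handled the same way in both proofs---by implicitly restricting to the open interval where the construction is nondegenerate.
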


\begin{proof}
It is enough to take parameters such that
\[
\lambda/\oq_i^2=\dfrac{1}{2}\rho\sigma_i^2,\qquad \lambda_i/q_i^2=\dfrac{1}{2}(1-\rho)\sigma_i^2
\]
with $\lambda$, $\lambda_1$ and $\lambda_2$ large enough for $c_1=\mu_1+\lambda/\oq_1+\lambda_1/q_1$ and $c_2=\mu_2+\lambda/\oq_2+\lambda_2/q_2$ to be positive. Straightforward calculation shows that
\begin{gather*}
\psi^{(n)}(s_1,s_2)\to\dfrac{1}{2}\left(\sigma_1^2s_1^2+2\rho\sigma_1\sigma_2s_1s_2+ \sigma_2^2s_2^2\right)+\mu_1s_1+\mu_2s_2,
\end{gather*}
and so we have $X^{(n)}\cid W$ according to~\cite[Theorem~15.17]{kallenberg}, where $W$ is a Brownian motion with the given parameters.
\end{proof}

In conclusion, the above defined drifted compound Poisson processes $X^{(n)}$ with exponential jumps can be used to approximate a given Brownian motion $X$ with non-negative correlation $\rho\in [0,1)$ and means satisfying~\eqref{A1} and~\eqref{A2}, with~\eqref{A3} being automatic. The construction of $Y^{(n)}$ is straightforward, see~\S\ref{sec:iterative}, and the conditions of Theorem~\ref{thm:dom_prob_conv} are satisfied. Thus, the total domination probabilities for $X$ can be derived from those for~$X^{(n)}$, which we indeed use to derive the Brownian kernel equation in the next section.

\section{Kernel equations}
\label{sec:kernel}
In the following we study the total domination probability $p_1(u,v)$ for two basic models. In fact, our focus is on the Laplace transform of $p_1$ and its restrictions where one initial position is fixed at~$0$:
\begin{align}
\label{eq:Laplace_transforms}
\begin{split}
&F(s_1,s_2)=\iint_{\mbR_+^2} e^{-s_1u-s_2v}p_1(u,v)\D u\D v,\\
&F_1(s_1)=\int\limits_0^\infty e^{-s_1u}p_1(u,0)\D u,\quad F_2(s_2)=\int\limits_0^\infty e^{-s_2v}p_1(0,v)\D v,
\end{split}
\end{align}
where $s_1,s_2>0$. It is noted that
\[
\hat{F}(s_1,s_2)=s_1s_2F(s_1,s_2)
\]
can be seen as the total domination probability of the first component when starting at independent exponential positions with rates $s_1$ and~$s_2$. Moreover, $\hat{F}(s_1,s_2)\to s_1F_1(s_1)$, $s_2\to\infty$, noting that $p_1$ is continuous by Theorem~\ref{thm:dom_prob_conv}, apart from the case~\eqref{eq:degenerate}.

Finally, we observe that rescaling of the model in~\eqref{eq:rescalign} results in $\hat{F}'(s_1,s_2)=\hat{F}(a_1s_1,a_2s_2)$. This, for example, allows to assume that $\mu_1'=\mu_2'=-1$ by taking $a_i=1/\abs*{\mu_i}$, in which case~\eqref{A2} reads simply $r_i'>1$. Alternatively, in the Brownian model we may take $\sigma_i=1$ without any loss of generality.

\subsection{Compound Poisson model}
First, we consider the compound Poisson model from~\S\ref{sec:example} with independent drivers $X_i$ having positive linear drifts $c_i$, jump arrival rates $\lambda_i$ and the jumps being negative exponentials with rates $q_i$. The bivariate Laplace exponent of $(X_1,X_2)$ is thus given by
\begin{equation}
\label{eq:psi}
\psi(s_1,s_2)=c_1s_1+c_2s_2-\lambda_1-\lambda_2+\dfrac{\lambda_1}{1+s_1/q_1}+ \dfrac{\lambda_2}{1+s_2/q_2}.
\end{equation}
Note that the choice of solution in~\eqref{eq:linear} does not play a role in this case.

\begin{prop}[Poissonian kernel equation]
\label{prop:Poisson_kernel}
Let the Laplace exponent $\psi$ be given by~\eqref{eq:psi} with $c_i,\lambda_i,q_i>0$ being such that~\eqref{A1} and~\eqref{A2} are satisfied with $\mu_i=c_i-\lambda_i/q_i$ and some $r_i>0$. Then
\begin{align}
\label{kernel_equation_compound_Poisson_model}
\begin{split}
\psi(s_1,s_2)F(s_1,s_2)=&\ \psi_1(s_1,s_2)\left[F_1(s_1)-F_1\left(q_2/r_2\right)\right]+\\
+&\ \psi_2(s_1,s_2)\left[F_2(s_2)-F_2\left(q_1/r_1\right)\right]+F_0,
\end{split}
\end{align}
where
\begin{gather*}
\psi_1(s_1,s_2)=c_2-\dfrac{\lambda_2q_2}{(q_2+s_2)(q_2-r_2s_1)},\quad
\psi_2(s_1,s_2)=c_1-\dfrac{\lambda_1q_1}{(q_1+s_1)(q_1-r_1s_2)},\\
F_0=c_2F_1\left(q_2/r_2\right)+c_1F_2\left(q_1/r_1\right).
\end{gather*}
\end{prop}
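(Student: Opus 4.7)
The plan is to derive the kernel equation from the harmonic property of $p_1$. Since the event $\{Y_1(t)\to\infty,\ Y_2(t)/Y_1(t)\to 0\}$ depends only on the tail of the path, the strong Markov property~\eqref{eq:strong} yields $\e_{(u,v)} p_1(Y(t))=p_1(u,v)$ for every $t\ges 0$, so $p_1(Y(t))$ is a bounded martingale. To avoid assuming any regularity of $p_1$, I would apply this identity at the first jump epoch $T\sim\mathrm{Exp}(\lambda_1+\lambda_2)$ of the compound Poisson driver: between $0$ and $T$ the path is purely deterministic with drift $(c_1,c_2)$, and since $c_1,c_2>0$ any starting position on the axes moves into the interior instantly, so no regulator activity occurs before $T$. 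Conditioning on $T$ and on which component jumps, and noting that a single negative jump from a position in $\mbR_+^2$ never lands in the wedge of Figure~\ref{fig:cases}, one obtains the integral equation
\begin{equation*}
p_1(u,v) = \int_0^\infty e^{-(\lambda_1+\lambda_2)t}\Bigl\{\lambda_1 J_1(u+c_1 t,\,v+c_2 t) + \lambda_2 J_2(u+c_1 t,\,v+c_2 t)\Bigr\}\D t,
\end{equation*}
where each $J_i$ is the expectation of $p_1$ after an $\mathrm{Exp}(q_i)$ jump in component $i$, split into an unreflected part (post-jump position in $\mbR_+^2$) and a reflected part (position pushed to the opposite axis with coordinate rescaled by $r_i$). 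Differentiating this identity at $t=0$ recovers the harmonic equation $\mathcal{A} p_1 = 0$ on $\mbR_+^2$.

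I would then multiply this harmonic equation by $e^{-s_1 u - s_2 v}$ and integrate over $\mbR_+^2$. The drift piece contributes, via integration by parts in $u$ and $v$, the combination $(c_1 s_1 + c_2 s_2) F(s_1,s_2) - c_1 F_2(s_2) - c_2 F_1(s_1)$. The unreflected jump terms give, after a standard convolution computation (substitute $y = u-x$ and swap the order of integration), the factor $\lambda_i/(1+s_i/q_i)$ times $F(s_1,s_2)$; together with the compensator contributions $-\lambda_i F$, these assemble with the drift part into $\psi(s_1,s_2) F(s_1,s_2)$ on the left-hand side. The reflected jump terms, after a change of variable $w = v + r_1(x-u)$ on $\{x>u\}$ (and symmetrically for the other axis) followed by a further switching of the order of integration in $(u,v,w)$, produce the combination $\frac{\lambda_1 q_1}{(s_1+q_1)(q_1-r_1 s_2)}\bigl[F_2(s_2)-F_2(q_1/r_1)\bigr]$ and its counterpart. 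Using the algebraic identities $c_2-\psi_1 = \frac{\lambda_2 q_2}{(q_2+s_2)(q_2-r_2 s_1)}$ and $c_1-\psi_2 = \frac{\lambda_1 q_1}{(q_1+s_1)(q_1-r_1 s_2)}$ to regroup the boundary coefficients then recovers~\eqref{kernel_equation_compound_Poisson_model}.

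The main obstacle is the bookkeeping in the reflected-jump integrals: the inner integral $\int_v^\infty p_1(0,w) e^{-q_1 w/r_1}\D w$ has to be coupled with the outer $v$-integral by a final Fubini step, and the resulting elementary integral $\int_0^w e^{(q_1/r_1-s_2)v}\D v$ is what produces both the $F_2(s_2)$ term and the distinguished evaluation $F_2(q_1/r_1)$; the cancellation between these two becomes transparent only once the sum is rewritten in terms of $\psi_1,\psi_2$. Accompanying analytic issues (Fubini, convergence of the Laplace transforms) are mild because $p_1\in[0,1]$ and $s_1,s_2>0$.
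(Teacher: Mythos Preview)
Your approach is essentially the paper's, repackaged. The paper obtains Proposition~\ref{prop:Poisson_kernel} as the special case $\lambda=0$ of the more general Proposition~\ref{kernel_equation_Poissonian_approximation} (common shocks allowed), and that is proved in Appendix~\ref{sec:derivation} by expanding $p_1(u,v)=\e_{(u,v)}p_1(Y(h))$ on the event of at most one jump in $[0,h]$, taking Laplace transforms, and letting $h\downarrow 0$. Your first-jump-time identity is the integrated form of the same expansion, and the integrals you describe for the unreflected and reflected jump pieces are exactly the paper's $I_8,I_{10}$ and $I_9,I_{11}$; the Fubini step you flag as ``the main obstacle'' is precisely the computation of $I_9$ and $I_{11}$ there.

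One small technical difference: you pass through the pointwise generator equation $\mathcal A p_1=0$ and then integrate by parts in $u$ and in $v$ separately, which needs $p_1\in C^1$, whereas the paper transforms first and only uses continuity of $p_1$ (supplied by Theorem~\ref{thm:dom_prob_conv}). Your route is fine once you bootstrap smoothness from the integral equation itself (the $J_i$ are visibly $C^1$ in $(u,v)$ when $p_1$ is continuous), but it is worth saying so explicitly, since ``differentiating at $t=0$'' only gives the directional derivative $c_1\partial_u+c_2\partial_v$ a priori.
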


It is important to note here that the kernel equation is explicit thanks to the assumption of exponential jumps. A more general (and cumbersome) kernel equation is discussed in \S\ref{sec:CPP_common}, where the common shocks are allowed. This particular equation is an important special case of Proposition~\ref{kernel_equation_Poissonian_approximation}.

Notice that the kernel equation of Proposition~\ref{prop:Poisson_kernel} (as well as the one of Proposition~\ref{prop:Brownian_kernel}) can have many solutions. Actually, it seems possible to obtain the same kernel equation for the Laplace transforms of $\p_{(u,v)}(A)$ with $A\in\bigcap_{t\ges 0}\sigma(Y(s),\; s\ges t)$, but a rigorous proof of this generalisation involves certain difficulties connected with the continuity and differentiability of $\p_{(u,v)}(A)$ that are hard to overcome. Thus, the kernel equation is a necessary condition for $p_1(u,v)$, but not a sufficient one. The uniqueness of the solution will be obtained in the following sections assuming the limit properties of Theorem~\ref{thm:domination}.

Next, we determine the constant $F_0$ which also yields a simple expression for $\hat{F}(q_2/r_2,q_1/r_1)$. For this purpose, we define the points
\begin{equation}
\label{eq:x0y0}
x_0\coloneqq\dfrac{\lambda_1}{c_1}-q_1>0
\quad \text{and}\quad
y_0\coloneqq\dfrac{\lambda_2}{c_2}-q_2>0
\end{equation}
which satisfy
\[
\psi(x_0,0)=\psi_2(x_0,0)=0,\quad \psi(0,y_0)=\psi_1(0,y_0)=0,\quad \text{and}\quad \psi(x_0,y_0)=0,
\]
see also Figure~\ref{fig:zoom} below.

\begin{lem}
\label{lem:F0}
In the setting of Proposition~\ref{prop:Poisson_kernel} we have
\begin{equation}
\label{eq:C0}
F_0=\dfrac{r_1(r_2\abs*{\mu_2}-\abs*{\mu_1})}{r_1r_2-1}\left(\dfrac{c_1}{q_1\abs*{\mu_1}}+ \dfrac{r_2c_2}{q_2\abs*{\mu_2}}\right)>0.
\end{equation}
\end{lem}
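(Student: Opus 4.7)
The plan is to evaluate the kernel equation at the three zeros of $\psi$ — the interior point $(x_0,y_0)$ and the boundary limits at $(x_0,0)$ and $(0,y_0)$ — and solve the resulting $3\times 3$ linear system for the three unknowns $F_0$, $F_1(x_0)-F_1(q_2/r_2)$, and $F_2(y_0)-F_2(q_1/r_1)$. At $(x_0,y_0)$ everything is finite, $\psi(x_0,y_0)=0$, and the kernel equation immediately gives
\[
\psi_1(x_0,y_0)[F_1(x_0)-F_1(q_2/r_2)]+\psi_2(x_0,y_0)[F_2(y_0)-F_2(q_1/r_1)]+F_0=0.
\]

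The two boundary limits require a careful Tauberian analysis driven by the distinct asymptotics $\lim_{u\to\infty}p_1(u,v)=1$ and $\lim_{v\to\infty}p_1(u,v)=0$ supplied by Theorem~\ref{thm:domination}. At $s_1=x_0$, $s_2\to 0^+$, dominated convergence applied to $\int e^{-x_0u}\e[p_1(u,V_{s_2})]\,\D u$ with $V_{s_2}\sim\mathrm{Exp}(s_2)$ yields $s_2F(x_0,s_2)\to 0$ and $s_2F_2(s_2)\to 0$; combined with $\psi(x_0,s_2)=\mu_2 s_2+O(s_2^2)$ and $\psi_2(x_0,s_2)=O(s_2)$, the kernel equation collapses to
\[
0=\psi_1(x_0,0)[F_1(x_0)-F_1(q_2/r_2)]+F_0.
\]
At $s_1\to 0^+$, $s_2=y_0$, the situation is very different: $\lim_{u\to\infty}p_1(u,v)=1$ forces $s_1F(s_1,y_0)\to 1/y_0$ and $s_1F_1(s_1)\to 1$, both nonzero. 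Combined with $\partial_{s_1}\psi(0,y_0)=\mu_1$ and the direct computation $\partial_{s_1}\psi_1(0,y_0)=-r_2c_2/q_2$, the kernel equation in this limit produces the inhomogeneous relation
\[
\mu_1/y_0=-r_2c_2/q_2+\psi_2(0,y_0)[F_2(y_0)-F_2(q_1/r_1)]+F_0.
\]

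Substituting the last two relations into the first reduces everything to a single scalar equation in $F_0$. Using the identities $x_0=q_1\abs*{\mu_1}/c_1$ and $y_0=q_2\abs*{\mu_2}/c_2$ (from $q_i+x_0,q_i+y_0=\lambda_i/c_i$), one computes
\[
\frac{\psi_1(x_0,y_0)}{\psi_1(x_0,0)}=\frac{r_2x_0}{y_0+r_2x_0},\qquad \frac{\psi_2(x_0,y_0)}{\psi_2(0,y_0)}=\frac{r_1y_0}{x_0+r_1y_0},
\]
so that the coefficient of $F_0$ after elimination is $-x_0y_0(r_1r_2-1)/[(y_0+r_2x_0)(x_0+r_1y_0)]$, nonzero by~\eqref{A2}. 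Using the algebraic simplification $\mu_1/y_0+r_2c_2/q_2=(r_2\abs*{\mu_2}-\abs*{\mu_1})/y_0$, the solution comes out to
\[
F_0=\frac{r_1(r_2\abs*{\mu_2}-\abs*{\mu_1})(y_0+r_2x_0)}{x_0y_0(r_1r_2-1)},
\]
which rearranges into the claimed form via $(y_0+r_2x_0)/(x_0y_0)=1/x_0+r_2/y_0=c_1/(q_1\abs*{\mu_1})+r_2c_2/(q_2\abs*{\mu_2})$; positivity is immediate from~\eqref{A2}.

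The main obstacle is precisely this asymmetric Tauberian step. A naive analysis assuming $\psi(s_1,s_2)F(s_1,s_2)\to 0$ at both boundary limits produces three relations homogeneous in $F_0$ and would force the incorrect conclusion $F_0=0$. The essential point is that near $s_1=0$ the linear vanishings of $\psi$ and $\psi_1$ meet the $1/s_1$ blow-ups of $F(\cdot,y_0)$ and $F_1$ to leave nonzero Tauberian remainders $\mu_1/y_0$ and $\partial_{s_1}\psi_1(0,y_0)$; these are the inhomogeneous data that break the degeneracy and determine $F_0$.
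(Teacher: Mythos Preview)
Your proof is correct and follows essentially the same approach as the paper: evaluate the kernel equation at $(x_0,0^+)$, $(0^+,y_0)$, and $(x_0,y_0)$, using the Tauberian limits $\hat F_1(0^+)=\hat F(0^+,y_0)=1$ and $\hat F_2(0^+)=\hat F(x_0,0^+)=0$ from Theorem~\ref{thm:domination}, then solve the resulting linear system. The paper records the same three equations (your middle display at $(x_0,0)$ is exactly the paper's equation~\eqref{eq:valueC}) and writes the solution as a ratio of $\psi_i$ values before ``simplification,'' whereas you carry out the algebra explicitly via the ratios $\psi_1(x_0,y_0)/\psi_1(x_0,0)$ and $\psi_2(x_0,y_0)/\psi_2(0,y_0)$ --- a cleaner presentation, but the same argument.
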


\begin{proof}
The limits in Theorem~\ref{thm:domination} imply that $\hat{F}(0+,y_0)=\hat{F}_1(0+)=1$ and $\hat{F}(x_0,0+)=\hat{F}_2(0+)=0$. Evaluating the kernel equation~\eqref{kernel_equation_compound_Poisson_model} at three points $(x_0,0+)$, $(0+,y_0)$ and $(x_0,y_0)$ we obtain the equalities:
\begin{align}
\label{eq:valueC}
0&=\psi_1(x_0,0)\left[F_1(x_0)-F_1\left(q_2/r_2\right)\right]+F_0,\\
\notag
\dfrac{c_1-\lambda_1/q_1}{y_0}&=-\dfrac{r_2c_2}{q_2}+\psi_2(0,y_0) \left[F_2(y_0)-F_2\left(q_1/r_1\right)\right]+F_0,\\
\notag
0&=\psi_1(x_0,y_0)\left[F_1(x_0)-F_1\left(q_2/r_2\right)\right]
+\psi_2(x_0,y_0)\left[F_2(y_0)-F_2\left(q_1/r_1\right)\right]+F_0.
\end{align}
We can now express $F_0$:
\[
F_0=\left(\dfrac{c_1-\lambda_1/q_1}{y_0}+\dfrac{r_2c_2}{q_2}\right) \dfrac{\psi_2(x_0,y_0)}{\psi_2(0,y_0)} \Biggm/ \left(\dfrac{\psi_1(x_0,y_0)}{\psi_1(x_0,0)}+ \dfrac{\psi_2(x_0,y_0)}{\psi_2(0,y_0)}-1\right),
\]
which upon simplification yields the stated expression.
\end{proof}

Importantly, the kernel equation~\eqref{kernel_equation_compound_Poisson_model} can be rewritten in a homogeneous form:
\begin{equation}
\label{eq:kerneleqsimple}
\psi(s_1,s_2)f(s_1,s_2)=\psi_1(s_1,s_2)f_1(s_1)+\psi_2(s_1,s_2)f_2(s_2),
\end{equation}
where the new functions are given by
\begin{gather}
\label{eq:F0tilde}
f(s_1,s_2)=F(s_1,s_2)-\dfrac{F_0/\widetilde{F}_0}{s_1 s_2},\quad \widetilde{F}_0=\dfrac{c_1r_1}{q_1}+\dfrac{c_2 r_2}{q_2},\\
\notag
f_1(s_1)=F_1(s_1)-F_1(q_2/r_2)-\dfrac{F_0}{\widetilde{F}_0} \left(\dfrac{1}{s_1}-\dfrac{r_2}{q_2}\right),\\
f_2(s_2)=F_2(s_2)-F_2(q_1/r_1)-\dfrac{F_0 }{\widetilde{F}_0} \left(\dfrac{1}{s_2}-\dfrac{r_1}{q_1}\right).
\end{gather}
This follows by realizing that
\begin{equation*}
\psi(s_1,s_2)\dfrac{1}{s_1s_2}=\psi_1(s_1,s_2)\dfrac{1}{s_1}+\psi_2(s_1,s_2)\dfrac{1}{s_2}+ \widetilde{F}_0,
\end{equation*}
multiplying it by $F_0/\widetilde{F}_0$, and subtracting from the kernel original equation.

\subsection{Correlated Brownian motion}
Secondly, we consider a correlated Brownian motion $X$ with means $\mu_i<0$, variances $\sigma_i^2>0$ and correlation $\rho\in [0,1)$, so that
\begin{equation}
\label{eq:psiBM}
\psi(s_1,s_2)=\dfrac{1}{2}(\sigma_1^2s_1^2+2\rho\sigma_1\sigma_2s_1s_2+\sigma_2^2s_2^2)+ \mu_1s_1+\mu_2s_2.
\end{equation}
We exclude $\rho=1$, because of condition~\eqref{A3}, and $\rho<0$ is likely to be similar but requires another approximating model and respective tedious analysis. Again, the ambiguity present in~\eqref{eq:linear} does not arise.

\begin{prop}[Brownian kernel equation]
\label{prop:Brownian_kernel}
Let the Laplace exponent $\psi$ be given by~\eqref{eq:psiBM} with $\mu_i<0$ satisfying~\eqref{A2} and $\rho\in [0,1)$. Then
\begin{equation}
\label{Brownian_kernel_equation}
\psi(s_1,s_2)F(s_1,s_2)=\psi_1(s_1,s_2)F_1(s_1)+\psi_2(s_1,s_2)F_2(s_2)+cp_1(0,0),
\end{equation}
where
\begin{gather}
\notag
\psi_1(s_1,s_2)=\mu_2+\dfrac{1}{2}\sigma_2^2(s_2-r_2s_1)+\rho\sigma_1\sigma_2s_1,\\
\notag
\psi_2(s_1,s_2)=\mu_1+\dfrac{1}{2}\sigma_1^2(s_1-r_1s_2)+\rho\sigma_1\sigma_2s_2,\\
\label{eq:defc}
c=\dfrac{1}{2}(r_1\sigma_1^2+r_2\sigma_2^2)-\rho\sigma_1\sigma_2.
\end{gather}
\end{prop}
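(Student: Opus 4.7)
The plan is to derive the Brownian kernel equation by passing to the limit in the Poissonian kernel equation with common shocks (Proposition~\ref{kernel_equation_Poissonian_approximation}) applied to the approximating sequence of compound Poisson processes $X^{(n)}$ constructed in Lemma~\ref{lem:approximation}. By that lemma, $X^{(n)} \cid X$ in the $J_1$-topology with matching means, variances and correlation; these $X^{(n)}$ satisfy \eqref{A1}, \eqref{A2}, and (trivially, since both coordinates admit individual exponential jumps) \eqref{A3} for all large $n$. The reflected processes $Y^{(n)}$ are well-defined via the iterative construction of \S\ref{sec:iterative}, and Lemma~\ref{lem:degenerate} together with Theorem~\ref{thm:dom_prob_conv} yields the pointwise convergence $p_i^{(n)}(u,v) \to p_i(u,v)$ on $\mbR_+^2$.

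The first step is to verify that the kernel objects converge. A Taylor expansion of the rational terms in~\eqref{eq:psi_common} in the small parameter $1/\sqrt{n}$, using the scaling~\eqref{approximation_parameters} together with the identities $\lambda/\oq_i^2 = \rho\sigma_i^2/2$ and $\lambda_i/q_i^2 = (1-\rho)\sigma_i^2/2$ from the proof of Lemma~\ref{lem:approximation}, shows that $\psi^{(n)}(s_1,s_2) \to \psi(s_1,s_2)$ with $\psi$ as in~\eqref{eq:psiBM}. An entirely analogous second-order computation performed on the functions $\psi_i^{(n)}$ appearing in the Poissonian kernel (once written out explicitly under the scaling) recovers precisely the Brownian $\psi_1$ and $\psi_2$ displayed in the proposition.

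The second step passes to the limit in the Laplace transforms. Since $0 \les p_1^{(n)} \les 1$, dominated convergence immediately gives $F^{(n)}(s_1,s_2) \to F(s_1,s_2)$ and $F_i^{(n)}(s_i) \to F_i(s_i)$ pointwise for $s_1,s_2 > 0$. The delicate terms are the boundary corrections $F_i^{(n)}(q_j^{(n)}/r_j)$ whose arguments $q_j^{(n)}/r_j = q_j\sqrt{n}/r_j$ diverge with $n$. Here Abel's theorem combined with the continuity of $p_1$ on the axes (a consequence of Theorem~\ref{thm:dom_prob_conv}) yields $(q_j^{(n)}/r_j)\,F_i^{(n)}(q_j^{(n)}/r_j) \to p_1(0,0)$, so that each such term is of order $1/\sqrt{n}$.

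The main obstacle is the third step: organising the various $\sqrt{n}$-divergent contributions. The coefficients $c_i^{(n)} = \mu_i + (\lambda/\oq_i + \lambda_i/q_i)\sqrt{n}$ entering $\psi_i^{(n)}$ themselves diverge like $\sqrt{n}$, and the Poissonian constant $F_0^{(n)}$ has a corresponding scaling, so \emph{a priori} the right-hand side of the Poissonian equation appears singular in $n$. One has to group the boundary corrections $\psi_i^{(n)}(s_1,s_2)\,F_j^{(n)}(q_j^{(n)}/r_j)$ with the constant term $F_0^{(n)}$, track the cancellations explicitly, and verify that what survives is exactly $c \cdot p_1(0,0)$ with $c$ as in~\eqref{eq:defc}. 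The coefficient $c = \tfrac{1}{2}(r_1\sigma_1^2 + r_2\sigma_2^2) - \rho\sigma_1\sigma_2$ should emerge from the same second-order bookkeeping that produced the Brownian $\psi$, now evaluated along the axes and weighted by the reflection parameters $r_1, r_2$.
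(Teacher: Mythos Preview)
Your overall strategy---pass to the limit in the common-shocks kernel equation of Proposition~\ref{kernel_equation_Poissonian_approximation} along the approximating sequence from Lemma~\ref{lem:approximation}, using Theorem~\ref{thm:dom_prob_conv} and dominated convergence for the transforms---is exactly the paper's approach. However, the description of the actual limiting computation is muddled in a way that matters.

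You cite Proposition~\ref{kernel_equation_Poissonian_approximation} but then describe the structure of Proposition~\ref{prop:Poisson_kernel}: there is no constant $F_0^{(n)}$ in the common-shocks equation, and there are no products of the form $\psi_i^{(n)}\,F_j^{(n)}(q_j^{(n)}/r_j)$ to regroup. The right-hand side of~\eqref{Poissonian_kernel_equation} contains instead the seven separate terms $\psi_1^{(n)}F_1^{(n)}$, $\psi_2^{(n)}F_2^{(n)}$, $\psi_3^{(n)}F_1^{(n)}(\cdot)$, $\psi_4^{(n)}F_2^{(n)}(\cdot)$, $\psi_5^{(n)}F_1^{(n)}(q_2^{(n)}/r_2)$, $\psi_6^{(n)}F_2^{(n)}(q_1^{(n)}/r_1)$, and $\psi_0^{(n)}p_1^{(n)}(0,0)$. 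The $\sqrt{n}$-divergence you worry about in $c_i^{(n)}$ is cancelled \emph{internally} within each $\psi_1^{(n)}$ and $\psi_2^{(n)}$ by the equally divergent $\lambda^{(n)}/\oq_j^{(n)}$ and $\lambda_j^{(n)}/q_j^{(n)}$ pieces; no cross-term bookkeeping is needed, and each of the seven terms has a finite limit on its own. In particular your step~3 as written would not produce the correct constant for $\rho>0$, because the contributions coming from $\psi_3^{(n)},\psi_4^{(n)},\psi_0^{(n)}$ (all proportional to $\rho$) are simply absent from your accounting.

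What actually happens is that $\psi_1^{(n)},\psi_2^{(n)}$ converge to the stated Brownian $\psi_1,\psi_2$, while the remaining five terms each converge to a constant multiple of $p_1(0,0)$: the arguments of $F_1^{(n)},F_2^{(n)}$ in the $\psi_3,\psi_4,\psi_5,\psi_6$ terms all tend to $+\infty$, so Abel's theorem (as you correctly note for $\psi_5,\psi_6$) turns each of them into $p_1(0,0)$ times the limit of the corresponding $\psi_k^{(n)}$ divided by that argument. Summing the five limiting constants gives precisely $c=\tfrac12(r_1\sigma_1^2+r_2\sigma_2^2)-\rho\sigma_1\sigma_2$. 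One also has to note (as the paper does) that the case split in Proposition~\ref{kernel_equation_Poissonian_approximation} is governed by the signs of $r_i/\oq_i-1/\oq_j$, which under the scaling reduce to the signs of $r_i\sigma_i-\sigma_j$, so the appropriate case must be selected and the degenerate situation~\eqref{eq:toexclude} avoided.
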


The proof of this proposition is given in Subsection~\ref{subsec:derivationBrowniankernel}.

Interestingly, here and in Proposition~\ref{prop:Poisson_kernel} the quantities $\psi_i$ can be expressed as $\psi_1(s_1,s_2)=(\psi(s_1,s_2)-\psi(s_1,-r_2s_1))/(s_2+r_2s_1)$, which are the same as in~\cite{ivanovs_boxma} studying the probabilities of hitting the origin in a different regime.

Importantly, the above kernel equation implies a simple formula for the domination probability when starting at the origin, but only in the independent case. For later use define
\begin{equation}
\label{eq:x0y0Brownian}
x_0\coloneqq -\dfrac{2\mu_1}{\sigma_1^2}>0
\quad \text{and}\quad
y_0\coloneqq -\dfrac{2\mu_2}{\sigma_2^2}>0,
\end{equation}
which satisfy $\psi(x_0,0)=\psi_2(x_0,0)=0$ and $\psi(0,y_0)=\psi_1(0,y_0)=0$. Importantly, for $\rho=0$ we also have $\psi(x_0,y_0)=0$.

\begin{cor}
\label{cor:BMp00}
In the setting of Proposition~\ref{prop:Brownian_kernel} with $\rho=0$ there is the formula
\begin{equation}
\label{eq:BMp00}
p_1(0,0)=\dfrac{r_1(r_2\abs*{\mu_2}-\abs*{\mu_1})(\sigma_1^2\abs*{\mu_2}+r_2\sigma_2^2\abs*{\mu_1})} {\abs*{\mu_1}\abs*{\mu_2}(r_1r_2-1)(r_1\sigma_1^2+r_2\sigma_2^2)}.
\end{equation}
\end{cor}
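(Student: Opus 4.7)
The overall strategy is to extract three independent linear relations among the three unknowns $F_1(x_0)$, $F_2(y_0)$ and $p_1(0,0)$ by evaluating the Brownian kernel equation~\eqref{Brownian_kernel_equation} (plus appropriate boundary limits) at the three privileged points $(0^+,y_0)$, $(x_0,0^+)$ and $(x_0,y_0)$, and then solve the resulting $3\times 3$ system. The point $(x_0,y_0)$ is available thanks to the special feature that $\psi(x_0,y_0)=0$ when $\rho=0$, as noted after~\eqref{eq:x0y0Brownian}.

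The two ingredients I would assemble first are the algebraic factorizations and the Abelian limits. Setting $\rho=0$ and using $x_0,y_0$ from~\eqref{eq:x0y0Brownian}, direct substitution gives
\[
\psi(s_1,y_0)=s_1\bigl(\tfrac12\sigma_1^2 s_1+\mu_1\bigr),\quad \psi(x_0,s_2)=s_2\bigl(\tfrac12\sigma_2^2 s_2+\mu_2\bigr),
\]
\[
\psi_1(s_1,y_0)=-\tfrac12\sigma_2^2 r_2 s_1,\quad \psi_2(x_0,s_2)=-\tfrac12\sigma_1^2 r_1 s_2,\quad \psi(x_0,y_0)=0.
\]
On the probabilistic side, Theorem~\ref{thm:domination} gives the fibrewise limits $p_1(u,v)\to 1$ as $u\to\infty$ and $p_1(u,v)\to 0$ as $v\to\infty$, and since $p_1\les 1$, dominated convergence (after the change of variables $s_1 u=x$ or $s_2 v=y$) yields the Abelian limits $s_1 F_1(s_1)\to 1$, $s_2 F_2(s_2)\to 0$, $s_1 F(s_1,y_0)\to 1/y_0$ and $s_2 F(x_0,s_2)\to 0$ as the respective variable tends to $0^+$.

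Next I would plug these into~\eqref{Brownian_kernel_equation}. At $(s_1,y_0)$ the leading terms are $\bigl(\tfrac12\sigma_1^2 s_1+\mu_1\bigr)\cdot s_1 F(s_1,y_0)$ and $-\tfrac12\sigma_2^2 r_2\cdot s_1 F_1(s_1)$, so letting $s_1\to 0^+$ produces
\[
\mu_1/y_0=-\tfrac12\sigma_2^2 r_2+\psi_2(0,y_0)F_2(y_0)+c\,p_1(0,0).
\]
The symmetric evaluation at $(x_0,s_2)$, where now both $\psi(x_0,s_2)F(x_0,s_2)$ and $\psi_2(x_0,s_2)F_2(s_2)$ vanish (each being $O(s_2)\cdot o(s_2^{-1})$), collapses to
\[
0=\psi_1(x_0,0)F_1(x_0)+c\,p_1(0,0).
\]
Finally, evaluating at $(x_0,y_0)$ and using $\psi(x_0,y_0)=0$ gives
\[
0=\psi_1(x_0,y_0)F_1(x_0)+\psi_2(x_0,y_0)F_2(y_0)+c\,p_1(0,0).
\]
The second equation expresses $F_1(x_0)$ as a multiple of $p_1(0,0)$; substituting in the third yields $F_2(y_0)$ as another multiple of $p_1(0,0)$; plugging both into the first leaves a single linear equation for $p_1(0,0)$.

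The main obstacle is organizing the resulting algebra so that~\eqref{eq:BMp00} emerges cleanly. The key cancellation I expect to drive the computation is
\[
r_1\bigl(|\mu_2|\sigma_1^2+r_2|\mu_1|\sigma_2^2\bigr)-\bigl(|\mu_1|\sigma_2^2+r_1|\mu_2|\sigma_1^2\bigr)=|\mu_1|\sigma_2^2(r_1 r_2-1),
\]
which is what produces the denominator factor $(r_1 r_2-1)$ in~\eqref{eq:BMp00}; after using $2c=r_1\sigma_1^2+r_2\sigma_2^2$ (from~\eqref{eq:defc} at $\rho=0$) the stated formula follows. The only analytic delicacy is the Abelian limit $s_1 F(s_1,y_0)\to 1/y_0$, which rests on swapping a limit in $u$ with the bounded integral in $v$; this is justified by dominated convergence with majorant $e^{-y_0 v}$ and the fibrewise limit $p_1(u,v)\to 1$ from Theorem~\ref{thm:domination}.
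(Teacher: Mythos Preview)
Your proposal is correct and follows essentially the same approach as the paper: both evaluate the kernel equation~\eqref{Brownian_kernel_equation} at $(x_0,0^+)$, $(0^+,y_0)$ and $(x_0,y_0)$ (using the Abelian limits from Theorem~\ref{thm:domination}) to obtain the same three linear relations among $F_1(x_0)$, $F_2(y_0)$ and $p_1(0,0)$, and then solve for $p_1(0,0)$. Your write-up simply fills in more of the limiting and algebraic details that the paper leaves implicit.
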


\begin{proof}
We again use the limits $\hat{F}(0+,y_0)=\hat{F}_1(0+)=1$ and $\hat{F}(x_0,0+)=\hat{F}_2(0+)=0$. Evaluating the kernel equation~\eqref{Brownian_kernel_equation} at three points $(x_0,0+)$, $(0+,y_0)$ and $(x_0,y_0)$ we obtain the equalities:
\begin{align}
\notag
0&=\psi_1(x_0,0)F_1(x_0)+cp_1(0,0),\\
\label{eq:BM2}
\dfrac{\mu_1}{y_0}&=-\dfrac{r_2}{2}\sigma_2^2+\psi_2(0,y_0)F_2(y_0)+cp_1(0,0),\\
\notag
0&=\psi_1(x_0,y_0)F_1(x_0)+\psi_2(x_0,y_0)F_2(y_0)+cp_1(0,0).
\end{align}
It is left to express $p_1(0,0)$ and to simplify the final formula.
\end{proof}

Finally, we can rewrite the kernel equation~\eqref{Brownian_kernel_equation} in a homogeneous form:
\begin{equation}
\label{eq:kerneleqsimpleBrownian}
\psi(s_1,s_2)f(s_1,s_2)=\psi_1(s_1,s_2)f_1(s_1)+\psi_2(s_1,s_2)f_2(s_2),
\end{equation}
where the new functions are given by
\begin{equation}
\label{eq:f1BM}
\begin{split}
f(s_1,s_2)\coloneqq F(s_1,s_2)-\dfrac{p_1(0,0)}{s_1 s_2},\\
f_1(s_1)\coloneqq F_1(s_1)-\dfrac{p_1(0,0)}{s_1},\quad f_2(s_2)\coloneqq F_2(s_2)-\dfrac{p_1(0,0)}{s_2}.
\end{split}
\end{equation}

\subsection{Common jumps}
\label{sec:CPP_common}
Here we consider the compound Poisson model with common jumps/shocks described in~\S\ref{sec:example}. Importantly, \eqref{eq:degenerate} is only satisfied if both
\begin{equation}
\label{eq:toexclude}
c_i=r_jc_j\qquad \text{and}\qquad \oq_i=\oq_j/r_j
\end{equation}
for some $i\neq j$. Hence, apart from this case the probability $p_1(u,v)$ is continuous.

\begin{prop}
\label{kernel_equation_Poissonian_approximation}
Consider $X$ defined in~\eqref{eq:Poissonian_model_definition}, where $\lambda\ges 0$ and the means $\mu_i=c_i-\lambda/\oq_i-\lambda_i/q_i<0$ satisfy~\eqref{A2}, but~\eqref{eq:toexclude} is not true for both $i\neq j$.
\begin{itemize}
\item
If $r_1/\oq_1>1/\oq_2$ and $r_2/\oq_2>1/\oq_1$, then the following kernel equation is satisfied:
\begin{multline}
\label{Poissonian_kernel_equation}
\psi(s_1,s_2)F(s_1,s_2)=\psi_1(s_1,s_2)F_1(s_1)+ \psi_2(s_1,s_2)F_2(s_2)+\\
+\psi_3(s_1,s_2)F_1\left(\dfrac{r_1+s_1(r_1/\oq_1-1/\oq_2)}{(r_1r_2-1)/\oq_2}\right)+ \psi_4(s_1,s_2)F_2\left(\dfrac{r_2+s_2(r_2/\oq_2-1/\oq_1)}{(r_1r_2-1)/\oq_1}\right)+\\
+\psi_5(s_1,s_2)F_1\left(q_2/r_2\right)+\psi_6(s_1,s_2)F_2\left(q_1/r_1\right)+ \psi_0(s_1,s_2)p_1(0,0),
\end{multline}
where $\psi$ is given in~\eqref{eq:psi_common} and 
\begin{gather*}
\psi_0(s_1,s_2)=-\dfrac{\lambda[(r_1/\oq_1-1/\oq_2)(r_2/\oq_2-1/\oq_1) (1+s_1/\oq_1+s_2/\oq_2)+r_1/\oq_1^2+r_2/\oq_2^2-2/(\oq_1\oq_2)]}
{(1+s_1/\oq_1+s_2/\oq_2)(r_1+(r_1/\oq_1-1/\oq_2)s_1)(r_2+(r_2/\oq_2-1/\oq_1)s_2)},\\
\psi_1(s_1,s_2)=c_2-\dfrac{\lambda/\oq_2}{(1+s_1/\oq_1+s_2/\oq_2)(1-(r_2/\oq_2-1/\oq_1)s_1)}- \dfrac{\lambda_2/q_2}{(1+s_2/q_2)(1-r_2s_1/q_2)},\\
\psi_2(s_1,s_2)=c_1-\dfrac{\lambda /\oq_1}{(1+s_1/\oq_1+s_2/\oq_2) (1-(r_1/\oq_1-1/\oq_2)s_2)}-\dfrac{\lambda_1/q_1}{(1+s_1/q_1)(1-r_1s_2/q_1)},\\
\psi_3(s_1,s_2)=\dfrac{\lambda/\oq_2}{(1+s_1/\oq_1+s_2/\oq_2)(1-(r_2/\oq_2-1/\oq_1)s_1)},\\
\psi_4(s_1,s_2)=\dfrac{\lambda/\oq_1}{(1+s_1/\oq_1+s_2/\oq_2)(1-(r_1/\oq_1-1/\oq_2)s_2)},\\
\psi_5(s_1,s_2)=\dfrac{\lambda_2/q_2}{(1+s_2/q_2)(1-r_2s_1/q_2)},\\
\psi_6(s_1,s_2)=\dfrac{\lambda_1/q_1}{(1+s_1/q_1)(1-r_1s_2/q_1)}.
\end{gather*}

\item
If $r_1/\oq_1>1/\oq_2$ and $r_2/\oq_2\les 1/\oq_1$, then the following kernel equation is satisfied:
\begin{multline*}
\psi(s_1,s_2)F(s_1,s_2)=\psi_1(s_1,s_2)F_1(s_1)+ \psi_2(s_1,s_2)F_2(s_2)+\\
+\psi_3(s_1,s_2)F_1\left(\dfrac{r_1+s_1(r_1/\oq_1-1/\oq_2)}{(r_1r_2-1)/\oq_2}\right)+ \psi_4(s_1,s_2)F_2\left(\dfrac{1-(r_2/\oq_2-1/\oq_1)s_1}{(r_1r_2-1)/\oq_2}\right)+\\
+\psi_5(s_1,s_2)F_1\left(q_2/r_2\right)+\psi_6(s_1,s_2)F_2\left(q_1/r_1\right)+\\
+\psi_7(s_1,s_2)F_2(1/(r_1/\oq_1-1/\oq_2))+\psi_0(s_1,s_2)p_1(0,0),
\end{multline*}
where $\psi$ is given in~\eqref{eq:psi_common}, $\psi_1$, $\psi_2$, $\psi_3$, $\psi_5$ and $\psi_6$ are the same as above, and
\begin{gather*}
\psi_0(s_1,s_2)=-\dfrac{\lambda(r_1r_2-1)/\oq_2^2} {(1+s_1/\oq_1+s_2/\oq_2)(r_1+(r_1/\oq_1-1/\oq_2)s_1)(1-(r_2/\oq_2-1/\oq_1)s_1)},\\
\psi_4(s_1,s_2)=\dfrac{\lambda/\oq_2}{(1+s_1/\oq_1+s_2/\oq_2)(r_1+(r_1/\oq_1-1/\oq_2)s_1)},\\
\psi_7(s_1,s_2)=\dfrac{\lambda(r_1/\oq_1-1/\oq_2)} {(r_1+(r_1/\oq_1-1/\oq_2)s_1)(1-(r_1/\oq_1-1/\oq_2)s_2)}.
\end{gather*}

\item
If $r_1/\oq_1\les 1/\oq_2$ and $r_2/\oq_2>1/\oq_1$, then the kernel equation coincides with that for the previous case with the indices changed correspondingly.
\end{itemize}
\end{prop}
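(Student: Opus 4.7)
The plan is to derive the kernel equation by first writing an integro-differential equation for $p_1(u,v)$ valid in the interior of $\mathbb{R}_+^2$ and then applying the double Laplace transform. Because the total domination event depends only on the tail of $Y$, the strong Markov property implies that $p_1(Y(t))$ is a bounded martingale, so $p_1$ is annihilated by the infinitesimal generator $\mathcal{L}$ on the interior. Writing this out explicitly for the driver~\eqref{eq:Poissonian_model_definition} gives
\begin{equation*}
c_1\partial_up_1+c_2\partial_vp_1+\lambda\,\e\bigl[p_1(\Pi(u-J/\oq_1,v-J/\oq_2))-p_1(u,v)\bigr]+(\text{analogous }\lambda_1,\lambda_2\text{ terms})=0,
\end{equation*}
where $J\sim\mathrm{Exp}(1)$ and $\Pi$ is the map $\mathbb{R}^2\to\mathbb{R}_+^2$ determined by~\eqref{eq:linear}.

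Next, multiply by $e^{-s_1u-s_2v}$ and integrate over $(u,v)\in\mathbb{R}_+^2$. The drift terms yield, via integration by parts, the contribution $c_1[s_1F(s_1,s_2)-F_2(s_2)]+c_2[s_2F(s_1,s_2)-F_1(s_1)]$. Each individual $X_i$-jump convolves with $e^{-z}\D z$ only in the $i$-th coordinate, so one splits the jump integral at $z=q_iu$ (resp.\ $q_iv$): the part landing back in $\mathbb{R}_+^2$ produces the factor $\lambda_i/(1+s_i/q_i)$ inherent to $\psi$, while the part crossing zero lies in the red region (for an $X_1$-jump) or the blue region (for an $X_2$-jump), since the other coordinate is unchanged and non-negative. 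Applying the reflection from~\eqref{eq:linear} and changing variables to the reflected image as the new integration variable, the residual Laplace integrals collapse to $F_1(q_2/r_2)$ and $F_2(q_1/r_1)$; this is the origin of $\psi_5$ and $\psi_6$.

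The genuinely delicate step is the common shock. The trajectory $\{(u-z/\oq_1,v-z/\oq_2):z\ges 0\}$ is a half-line of slope $\oq_1/\oq_2$ descending from $(u,v)$ into the third quadrant, and the three cases in the proposition correspond exactly to the position of this slope relative to the wedge cone $[1/r_2,r_1]$: in Case I ($r_1/\oq_1>1/\oq_2$ and $r_2/\oq_2>1/\oq_1$) the slope lies strictly inside the cone, so the line traverses all three parts (red, wedge, blue) of the non-positive region; in Case II the slope lies below the cone so the red part is skipped; Case III is symmetric. Within each $z$-sub-interval one applies the appropriate branch of~\eqref{eq:linear}, changes variables to the reflected image, and evaluates the resulting Laplace integral. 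The wedge sub-integral, where the linear complementarity collapses to $(0,0)$, produces the term $\psi_0\,p_1(0,0)$; the blue and red sub-integrals produce the transforms $F_1$ and $F_2$ evaluated at the shifted arguments $(r_1+s_1(r_1/\oq_1-1/\oq_2))/((r_1r_2-1)/\oq_2)$ and $(r_2+s_2(r_2/\oq_2-1/\oq_1))/((r_1r_2-1)/\oq_1)$, which arise naturally from the affine substitution that linearises the reflection map over the relevant sub-interval.

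The main obstacle is the careful bookkeeping of these splits and the algebraic simplification needed to combine every contribution—three jump types, up to three sub-intervals each, boundary terms from integrations by parts, and the various Jacobian factors—into the compact form of~\eqref{Poissonian_kernel_equation}. Because of this algebraic volume I would relegate the step-by-step calculation to Appendix~\ref{sec:derivation}, as the authors do, and in the main text only verify that the constants $c_i$ and $\mu_i$ cancel as they should, so that the final equation has the announced structure with precisely the $\psi_j$ coefficients stated. Finally, the well-definedness of $p_1(0,0)$ as a point value, rather than only as a limit, is guaranteed outside the excluded configuration~\eqref{eq:toexclude} by Lemma~\ref{lem:degenerate} together with the continuity statement of Theorem~\ref{thm:dom_prob_conv}.
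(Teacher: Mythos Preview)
Your strategy is the paper's strategy: exploit the Markov property to obtain an integro-differential relation for $p_1$ on the interior, take double Laplace transforms, and split the jump integrals according to which sector of Figure~\ref{fig:cases} the post-jump position lands in. Your geometric reading of the three cases via the slope $\oq_1/\oq_2$ of the common-shock line relative to the wedge cone $[1/r_2,r_1]$ is also correct and matches exactly how the integrals $I_2$--$I_7$ in Appendix~\ref{sec:derivation} are organised (with the minor caveat that from a \emph{fixed} starting point the shock line visits at most two of the three sectors, not all three; both orderings appear only after integrating over $(u,v)$).

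There is one technical point where your write-up is looser than the paper's. You pass directly to the pointwise generator equation $c_1\partial_u p_1+c_2\partial_v p_1+\cdots=0$ and then integrate by parts. This presupposes that $p_1$ is $C^1$ along the drift direction, which has not been established anywhere; the results you cite (Lemma~\ref{lem:degenerate} and Theorem~\ref{thm:dom_prob_conv}) give only continuity. The paper sidesteps this by never writing the derivatives: it expands $p_1(u,v)$ over a finite time window $[0,h]$, so the drift enters as the shift $p_1(u+c_1h,v+c_2h)$, and after Laplace transforming one changes variables $u\mapsto u-c_1h$, $v\mapsto v-c_2h$ to produce the boundary terms $c_2F_1(s_1)+c_1F_2(s_2)$ and the factor $e^{(s_1c_1+s_2c_2)h}=1+(s_1c_1+s_2c_2)h+o(h)$. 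Dividing by $h$ and letting $h\downarrow 0$ then requires only continuity of $p_1$ to control the $o(h)$ remainder. If you want your version to be rigorous you should either imitate this finite-$h$ argument or first prove differentiability (e.g.\ by a renewal argument along characteristics), and state explicitly which route you take.
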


The derivation is tedious and thus is postponed to Appendix~\ref{sec:derivation}. It is based on the analysis of all the non-negligible scenarios on the infinitesimal time interval $[0,h]$ and the strong Markov property. Then we take transforms and the limit as $h\downarrow 0$, which are followed by lengthy algebraic manipulations. It is important here that the probability $p_1$ is continuous as mentioned above.

Note that the kernel equation~\eqref{kernel_equation_compound_Poisson_model} follows immediately from~\eqref{Poissonian_kernel_equation} by taking $\lambda=0$, where every case can be used, since $\oq_i$ are arbitrary.

\subsection{Derivation of the Brownian kernel by approximation}
\label{subsec:derivationBrowniankernel}
The proof of~\eqref{Brownian_kernel_equation} is based on the approximation in~\S\ref{sec:example}.

\begin{proof}[Proof of Proposition~\ref{prop:Brownian_kernel}]
Let us choose the approximating models as specified in Lemma~\ref{lem:approximation}, and consider the sequence of kernel equations in~\eqref{Poissonian_kernel_equation}. Importantly, we can always avoid the degenerate case in~\eqref{eq:toexclude} for each~$n$; in addition, considering here, for the sake of brevity, only the case when $r_1\sigma_1>\sigma_2$ and $r_2\sigma_2>\sigma_1$, we can also choose the approximating parameters such that $r_1/\oq_1^{(n)}>1/\oq_2^{(n)}$ and $r_2/\oq_2^{(n)}>1/\oq_1^{(n)}$.

Now we recall that $\psi^{(n)}(s_1,s_2)\to\psi(s_1,s_2)$, and by Theorem~\ref{thm:dom_prob_conv} and the dominated convergence theorem, we have $F^{(n)}(s_1,s_2)\to F(s_1,s_2)$ and $F^{(n)}_i(s_i)\to F_i(s_i)$ for $i=1,2$. Also, it is easy to check that
\begin{gather*}
\psi_0^{(n)}(s_1,s_2)\to -\rho\sigma_1\sigma_2+ \dfrac{\rho}{2}\left(\dfrac{\sigma_1^2}{r_2}+\dfrac{\sigma_2^2}{r_1}\right),\\
\psi_i^{(n)}(s_1,s_2)\to \mu_i+\dfrac{1}{2}\sigma_i^2(s_i-r_is_j)+ \rho\sigma_1\sigma_2s_j,\qquad  (i,j)=(1,2)\; \text{ or }\; (2,1).
\end{gather*}
Furthermore,
\begin{gather*}
\psi_3^{(n)}(s_1,s_2)F_1^{(n)}\left(\dfrac{r_1\oq_2^{(n)}+s_1(r_1\oq_2^{(n)}/\oq_1^{(n)}-1)} {r_1r_2-1}\right)\to\dfrac{\rho\sigma_2^2}{2}\cdot\dfrac{r_1r_2-1}{r_1}\cdot p_1(0,0),\\
\psi_4^{(n)}(s_1,s_2)F_2^{(n)}\left(\dfrac{r_2\oq_1^{(n)}+s_2(r_2\oq_1^{(n)}/\oq_2^{(n)}-1)} {r_1r_2-1}\right)\to\dfrac{\rho\sigma_1^2}{2}\cdot\dfrac{r_1r_2-1}{r_2}\cdot p_1(0,0),
\end{gather*}
and
\begin{gather*}
\psi_5^{(n)}(s_1,s_2)F_1\left(q_2^{(n)}/r_2\right)\to \dfrac{1}{2}(1-\rho)\sigma_2^2r_2p_1(0,0),\\
\psi_6^{(n)}(s_1,s_2)F_2\left(q_1^{(n)}/r_1\right)\to \dfrac{1}{2}(1-\rho)\sigma_1^2r_1p_1(0,0).
\end{gather*}

Combining the obtained values we arrive at the stated result. All other cases can be considered in a similar way and lead to the same kernel equation.
\end{proof}

\section{Explicit solution for the Poissonian model}
\label{sec:poi}
In this section we solve the kernel equation~\eqref{kernel_equation_compound_Poisson_model} by establishing an explicit integral expression for the Laplace transform $F_1(s_1)$, see Theorem~\ref{thm:BVPsolution} below, with $F_2(s_2)$ being analogous. Additionally, in Corollary~\ref{cor:totaldomination} we determine $p_1(0,0)$, the probability of total domination starting from the origin, and in Lemma~\ref{lem:F0} we find a simple formula for $F(q_2/r_2,q_1/r_1)$. It would be interesting to understand if this formula can be explained by a direct probabilistic reasoning. We also obtain the asymptotics of $p_1(u,0)$ and $p_1(0,v)$ as $u,v\to\infty$, see Proposition~\ref{prop:asympt}. We adapt the analytic method from~\cite{fayolle_random_2017} which relies on the following steps: study of the kernel $\psi$, analytic continuation of $F_1$ and study of its singularities, formulation of a boundary value problem and its solution.

Without stating it explicitly we assume in the following that our parameters satisfy the conditions of Proposition~\ref{prop:Poisson_kernel}.

\subsection{Study of the kernel}
Consider the kernel $\psi(s_1,s_2)$ given in~\eqref{eq:psi}. The basic idea is to consider its zeros, and so we  define the bi-valued functions $S_1$ and $S_2$ such that
\[
\psi(S_1(s_2),s_2)=0\quad \text{and}\quad \psi(s_1,S_2(s_1))=0.
\]
To do so, we remark that $\psi(s_1,s_2)=0$ is equivalent to
\[
a(s_1)s_2^2+b(s_1)s_2+c(s_1)=0
\]
where
\begin{align*}
a(s_1)&\coloneqq s_1c_2+c_2q_1,\qquad
b(s_1) \coloneqq s_1^2c_1+s_1(c_1q_1+c_2q_2-\lambda_1-\lambda_2)-\lambda_2q_1+c_2q_2q_1,\\
c(s_1) &\coloneqq s_1^2c_1q_2+s_1(-\lambda_1q_2+c_1q_1q_2).
\end{align*}
We also note
\[
d(s_1)\coloneqq b^2(s_1)-4a(s_1)c(s_1)
\]
which is a fourth degree polynomial with roots denoted by $x_1,x_2,x_3,x_4$. Similarly we define $\widetilde{a}$, $\widetilde{b}$, $\widetilde{c}$, $\widetilde{d}$, and let $y_i$ be the four roots of $\widetilde{d}$. Then we have
\[
S_2(s_1)\coloneqq\dfrac{-b(s_1)\pm\sqrt{d(s_1)}}{2a(s_1)}
\quad \text{and}\quad
S_1(s_2)\coloneqq\dfrac{-\widetilde{b}(s_2)\pm\sqrt{\widetilde{d}(s_2)}}{2\widetilde{a}(s_2)}.
\]
The branch points of $S_2$ are the points $x_i$ and the branch points of $S_1$ are the points $y_i$.

\begin{lem}[Branch points]
\label{lem:branchpoint}
The polynomial $d(s_1)$ has four real roots $x_i$ which satisfy
\[
-q_1<x_1<x_2<0<-q_1+\sqrt{\lambda_1q_1/c_1}<x_3<x_4.
\]
The polynomial $d$ is then negative on $[x_1,x_2]\cup [x_3,x_4]$ and positive on $\mbR\setminus ([x_1,x_2]\cup [x_3,x_4])$. The same result hold for the roots $y_i$ of $\widetilde{d}$.
\end{lem}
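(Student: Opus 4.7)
The plan is to locate the real roots of the quartic $d(s_1) = b(s_1)^2 - 4a(s_1)c(s_1)$ by a sign analysis. First I will rewrite the coefficients so that their zeros become transparent:
\[
a(s_1) = c_2(s_1+q_1), \qquad c(s_1) = c_1 q_2\, s_1(s_1 - x_0),
\]
with $x_0 = \lambda_1/c_1 - q_1$, which is positive because $\mu_1 = c_1 - \lambda_1/q_1 < 0$. The leading coefficient of $d$ is $c_1^2 > 0$, so $d(s_1)\to +\infty$ as $\abs{s_1}\to\infty$.

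Next I will evaluate $d$ at the three points where $ac$ vanishes. A short computation gives $b(-q_1) = \lambda_1 q_1$, $b(0) = -q_1 c_2 y_0$ and $b(x_0) = -(\lambda_1/c_1)c_2 y_0$, with $y_0 := \lambda_2/c_2 - q_2 > 0$; since all three are non-zero, $d(-q_1), d(0), d(x_0) > 0$. Moreover on $(-\infty,-q_1)$ the factors satisfy $a<0<c$ and on $(0,x_0)$ they satisfy $a>0>c$; in both cases $-4ac>0$, so $d>0$ throughout. Hence every real root of $d$ must lie in $(-q_1,0) \cup (x_0,+\infty)$.

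To exhibit sign changes inside these two intervals I will exploit the quadratic $b$: its roots have product $-q_1 c_2 y_0/c_1 < 0$, so $b$ has exactly one negative root $s_1^*$ and one positive root $s_1^{**}$, located by $b(-q_1)>0>b(0)$ and $b(x_0)<0<b(+\infty)$ in $(-q_1,0)$ and $(x_0,+\infty)$ respectively. At these two points $d = -4ac$, and a sign check of the three factors $s+q_1$, $s$ and $s-x_0$ immediately gives $d(s_1^*) < 0$ and $d(s_1^{**}) < 0$.

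Combining these sign changes with $d(-q_1), d(0), d(x_0) > 0$ and $d(+\infty)=+\infty$, the intermediate value theorem produces a root of $d$ in each of $(-q_1, s_1^*)$, $(s_1^*, 0)$, $(x_0, s_1^{**})$ and $(s_1^{**}, +\infty)$. Since $d$ is a quartic, these four are all of its roots and are simple; this forces the ordering $-q_1 < x_1 < x_2 < 0 < x_0 < x_3 < x_4$ and the alternating sign pattern claimed in the lemma. The refined bound $-q_1 + \sqrt{\lambda_1 q_1/c_1} < x_3$ follows from $x_3 > x_0$ together with $\lambda_1/c_1 > \sqrt{\lambda_1 q_1/c_1}$, which is equivalent to $\lambda_1/c_1 > q_1$, i.e.\ $\mu_1 < 0$. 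Interchanging indices $1$ and $2$ yields the analogous statement for $\widetilde{d}$. The only non-trivial element of the argument is the disciplined bookkeeping of signs of the relevant factors on each subinterval.
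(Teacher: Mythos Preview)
Your proof is correct and follows essentially the same route as the paper's: both arguments show $d>0$ on $(-\infty,-q_1]\cup[0,x_0]$ because $-4ac\ges 0$ there, locate the two roots of $b$ in $(-q_1,0)$ and $(x_0,\infty)$, observe $d=-4ac<0$ at those roots, and apply the intermediate value theorem. Your version is slightly more explicit (computing $b(-q_1)$, $b(0)$, $b(x_0)$ directly), and your derivation of the refined bound via $x_3>x_0>-q_1+\sqrt{\lambda_1q_1/c_1}$ is in fact cleaner than the paper's hint.
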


\begin{proof}
First, remark that for all $s_1\in (-\infty,-q_1]\cup [0,\lambda_1/c_1-q_1]$ we have $-4a(s_1)c(s_1)\ges 0$ and then $d(s_1)>0$ (since the roots of $b$ are different from $-q_1$, $0$, $\lambda_1/c_1-q_1$). For $s_1\in (-q_1,0)\cup (\lambda_1/c_1-q_1,\infty)$ we have $-4a(s_1)c(s_1)<0$. We denote by $x^\pm$ the two roots of $b$ and remark that $-q_1<x^-<0<\lambda_1/c_1-q_1<x^+$, so that $d(x^\pm)=-4a(x^\pm)c(x^\pm)<0$. Additionally, we have $d(s_1)\to +\infty$ as $s_1\to +\infty$. Now we conclude by the intermediate value theorem and noticing that $-q_1+\sqrt{\lambda_1q_1/c_1}<x^+$.
\end{proof}

By Lemma~\ref{lem:branchpoint}, $d(s_1)$ is positive for $s_1\in [x_2,x_3]$ and we can take on this interval the usual square root $d$ without sign ambiguity. We define $\sqrt{d}$ as the analytic function on the cut plane $\mbC\setminus ([x_1,x_2]\cup [x_3,x_4])$ which coincides with the usual square root of $d$ on $[x_2,x_3]$. We denote by $S_2^+$ the branch of the bi-valued function $S_2$ which is equal to $(-b+\sqrt{d})/(2a)$ and which is analytic on $\mbC\setminus ([x_1,x_2]\cup [x_3,x_4])$. We denote by $S_2^-$ the other branch. See Figure~\ref{fig:kernel} and~\ref{fig:zoom} to visualize these functions on $\mbR$. In the same way, we denote by $S_1^+$ and $S_1^-$ the two branches of $S_1$ which are analytic on $\mbC\setminus ([y_1,y_2]\cup [y_3,y_4])$.

\begin{figure}[ht!]
\begin{center}
\includegraphics[scale=0.8]{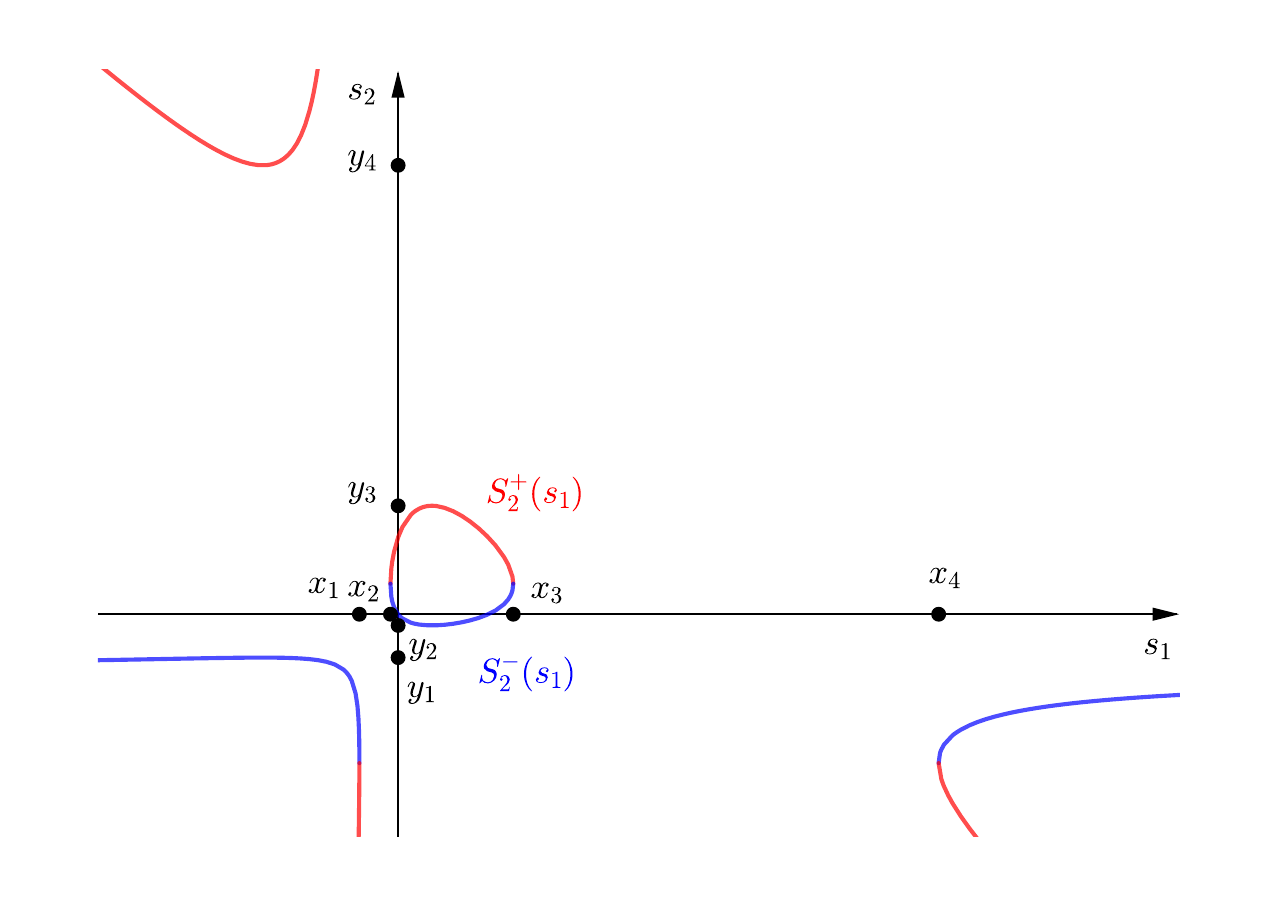}
\vspace{-0.8cm}
\caption{General shape of the curve $\{(s_1,s_2)\in\mbR^2\colon \psi(s_1,s_2)=0\}$ divided in two parts: the function $S_2^-$ (blue) and the function $S_2^+$ (red).}
\label{fig:kernel}
\end{center}
\end{figure}

\begin{figure}[ht!]
\begin{center}
\includegraphics[scale=0.3]{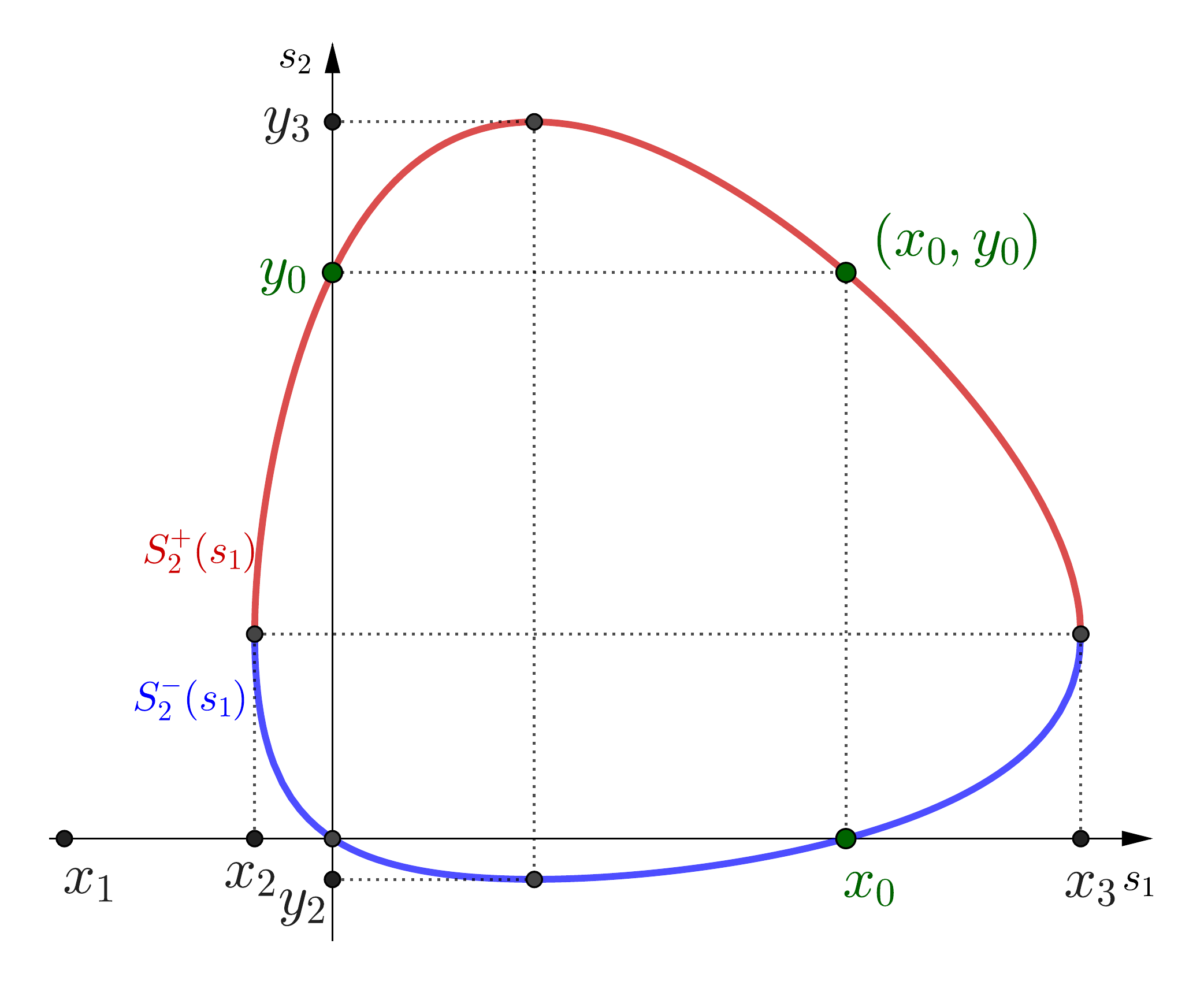}
\vspace{-0.4cm}
\caption{Zoom of Figure~\ref{fig:kernel}: the branch points $x_i$ and $y_i$ are in black, the points $x_0$ and $y_0$ are in green.}
\label{fig:zoom}
\end{center}
\end{figure}

\begin{figure}[ht!]
\begin{center}
\includegraphics[scale=1]{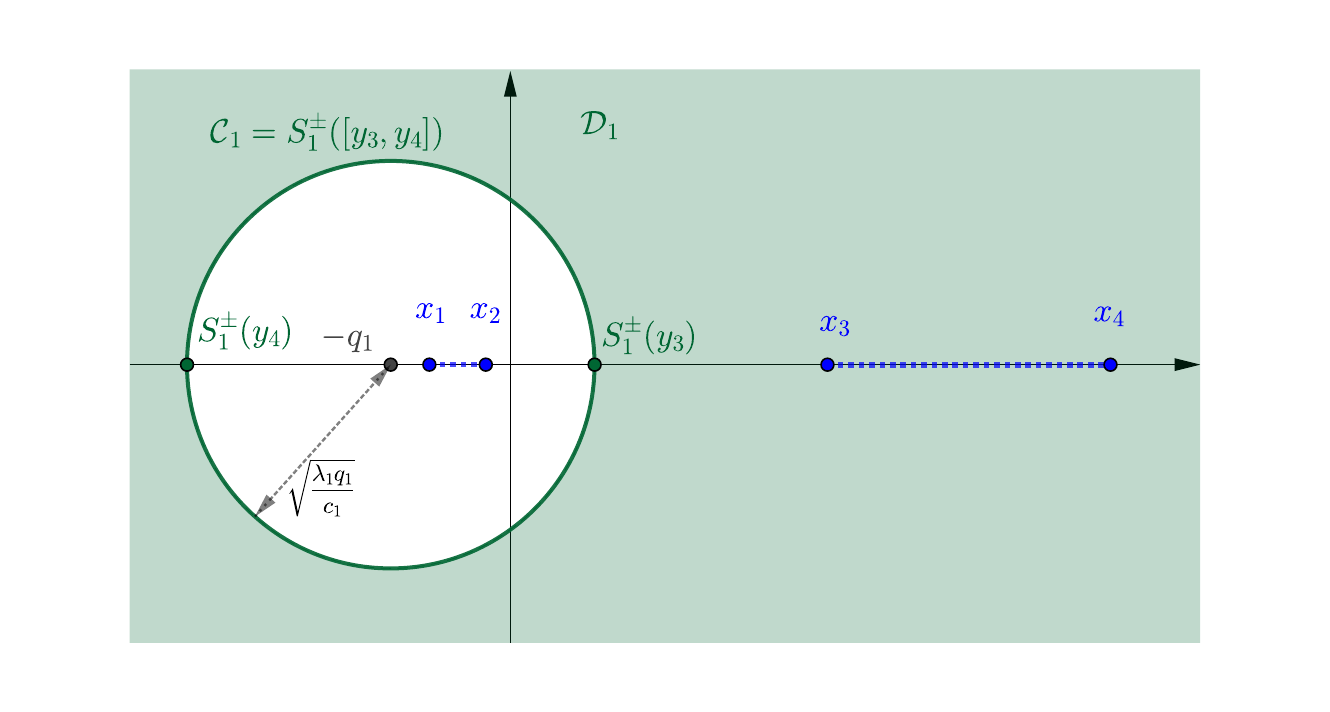}
\caption{Complex plane of the $s_1$ variable: in blue the branch points $x_i$ and the cuts on the complex plane, in green the circle $\mcC_1$ and the domain $\mcD_1$.}
\label{fig:courbes}
\end{center}
\end{figure}

For further use, we define the curve
\[
\mcC_1\coloneqq S_1^\pm([y_3,y_4])=\left\lbrace\dfrac{-\widetilde{b}(y)\pm i\sqrt{-\widetilde{d}(y)}}{2\widetilde{a}(y)}\colon y\in [y_3,y_4]\right\rbrace.
\]
This curve will be the boundary in the boundary value problem established in Section~\ref{sec:BVP}.

\begin{lem}[Circle $\mcC_1$]
The curve $\mcC_1$ is a circle with centre at $-q_1$ and radius $\sqrt{\frac{\lambda_1q_1}{c_1}}$.
\end{lem}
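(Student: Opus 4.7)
The plan is to clear denominators in $\psi = 0$ and exploit the resulting polynomial structure. I would set
\[
P(s_1, s_2) := \psi(s_1, s_2)(q_1+s_1)(q_2+s_2) = (c_1 s_1 + c_2 s_2 - \lambda_1 - \lambda_2)(q_1+s_1)(q_2+s_2) + \lambda_1 q_1(q_2+s_2) + \lambda_2 q_2(q_1+s_1),
\]
a polynomial in $(s_1, s_2)$ whose zero set coincides with that of $\psi$. Writing $P$ as a quadratic in $s_1$ gives $P(s_1,s_2) = \widetilde{a}(s_2) s_1^2 + \widetilde{b}(s_2) s_1 + \widetilde{c}(s_2)$, where by symmetry with the formula for $a(s_1)$ we read off $\widetilde{a}(s_2) = c_1(q_2 + s_2)$.

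For $y \in (y_3, y_4)$ the discriminant $\widetilde{d}(y)$ is negative, so $S_1^+(y)$ and $S_1^-(y)$ are complex conjugates. Using Vieta's formulas and the identity $|S_1^\pm(y) + q_1|^2 = (S_1^+(y)+q_1)(\overline{S_1^+(y)}+q_1) = (S_1^+(y)+q_1)(S_1^-(y)+q_1)$, I would compute
\[
|S_1^\pm(y) + q_1|^2 = q_1^2 - q_1\,\frac{\widetilde{b}(y)}{\widetilde{a}(y)} + \frac{\widetilde{c}(y)}{\widetilde{a}(y)} = \frac{P(-q_1, y)}{\widetilde{a}(y)}.
\]
Substituting $s_1 = -q_1$ in the explicit formula for $P$ kills the first and third terms (both contain the factor $q_1+s_1$), leaving $P(-q_1, y) = \lambda_1 q_1 (q_2 + y)$. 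Dividing by $\widetilde{a}(y) = c_1(q_2+y)$, the $y$-dependence cancels and I obtain $|S_1^\pm(y) + q_1|^2 = \lambda_1 q_1 / c_1$, showing that $\mathcal{C}_1$ lies on the stated circle.

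To upgrade this inclusion to equality, I would note that at the endpoints $y_3$ and $y_4$ the discriminant vanishes, so both branches collapse to a real double root which, by the circle identity, must equal one of the two antipodal real points $-q_1 \pm \sqrt{\lambda_1 q_1/c_1}$. A short sign/monotonicity check on $\alpha(y) = -\widetilde{b}(y)/(2\widetilde{a}(y))$ at $y_3$ versus $y_4$ (using that $\widetilde{a}(y) > 0$ on $[y_3,y_4]$ since $y_3 > 0 > -q_2$) shows that these endpoints give different antipodal values. Then continuity of $S_1^\pm$ on $[y_3,y_4]$, together with the conjugation symmetry $S_1^-(y) = \overline{S_1^+(y)}$ in the interior, forces $S_1^+$ to trace the upper semicircle and $S_1^-$ the lower one, so that together they sweep out the full circle.

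The algebraic step giving the radius is essentially a one-line consequence of evaluating $P$ at $s_1 = -q_1$; the main obstacle is the endpoint-antipodality/continuity argument needed to conclude that $\mathcal{C}_1$ is the complete circle rather than merely a subset of it.
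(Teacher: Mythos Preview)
Your proposal is correct, and the inclusion step takes a genuinely different route from the paper. The paper exploits the additive separability of $\psi$: since both $s_1$ and $\overline{s_1}$ satisfy $\psi(\cdot,s_2)=0$ for the same real $s_2$, subtracting gives
\[
c_1 s_1 + \dfrac{\lambda_1 q_1}{s_1+q_1} = c_1 \overline{s_1} + \dfrac{\lambda_1 q_1}{\overline{s_1}+q_1},
\]
from which $|s_1+q_1|^2=\lambda_1 q_1/c_1$ drops out in one line. Your Vieta argument on the cleared polynomial $P$ is a bit more computational but more robust: it does not rely on $\psi$ splitting as a sum of one-variable pieces, so it would transfer to kernels without that structure. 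Both approaches land on the same circle identity.

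For the full-circle step, your treatment is actually more careful than the paper's. The paper only says that $S_1^+(y_i)=S_1^-(y_i)$ makes $\mcC_1$ a closed curve and stops there; strictly speaking, a closed curve contained in a circle need not be the whole circle, so the antipodality of the two real endpoint values is the missing ingredient, and you correctly isolate it. Your proposed monotonicity check does work: differentiating $\alpha(y)=-\widetilde b(y)/(2\widetilde a(y))$ and simplifying gives
\[
\alpha'(y)=\dfrac{c_1\bigl[\lambda_2 q_2 - c_2(y+q_2)^2\bigr]}{2\widetilde a(y)^2},
\]
which is strictly negative on $[y_3,y_4]$ because the branch-point lemma gives $y_3>-q_2+\sqrt{\lambda_2 q_2/c_2}$. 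Hence $\alpha(y_3)\neq\alpha(y_4)$, the two real endpoints are the antipodal points $-q_1\pm\sqrt{\lambda_1 q_1/c_1}$, and your continuity/conjugation argument then yields the full circle.
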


\begin{proof}
By definition, if $s_1\in\mcC_1$ then there exists $s_2\in [y_3,y_4]$ such that $\psi(s_1,s_2)=0$ and we also have $\overline{s_1}\in\mcC_1$ and $\psi(\overline{s_1},s_2)=0$. It implies that $\psi(s_1,s_2)=\psi(\overline{s_1},s_2)$, that is
\[
c_1s_1+\dfrac{\lambda_1q_1}{s_1+q_1}=c_1\overline{s_1}+\dfrac{\lambda_1q_1} {\overline{s_1}+q_1}.
\]
Then we find that
\[
\abs*{s_1+q_1}^2=\dfrac{\lambda_1q_1}{c_1}.
\]
We deduce that $\mcC_1$ is included in the circle of centre $-q_1$ and radius $\sqrt{\frac{\lambda_1q_1}{c_1}}$. Furthermore, as $S_1^+(y_i)=S_1^-(y_i)$ it implies that $\mcC_1$ is a closed curve, which concludes the proof.
\end{proof}

In fact, we may choose the interval $[y_1,y_2]$ instead of $[y_3,y_4]$, since $\mcC_1=S_1^\pm([y_1,y_2])$. Finally, we define the domain
\[
\mcD_1\coloneqq\left\lbrace s_1\in\mbC\colon \abs*{s_1+q_1}^2>\dfrac{\lambda_1q_1}{c_1}\right\rbrace,
\]
which is the complementary of the disc defined by the circle $\mcC_1$, see Figure~\ref{fig:courbes}. We deduce from Lemma~\ref{lem:branchpoint} that $x_3,x_4$ are in $\mcD_1$ and that $x_1,x_2$ are not.

\subsection{Analytic continuation and asymptotics}
\label{sec:continuation}
The goal of this section is to continue analytically $F_1$ to the domain $\mcD_1$ and to study its singularities in order to compute the asymptotics of $p_1(u,0)$ and $p_1(0,v)$, see Proposition~\ref{prop:asympt}.

\begin{lem}[Analytic continuation]
\label{lem:continuation}
The function $F_1(s_1)$ can be meromorphically extended to the set
\[
\{s_1\in\mbC\colon \Re s_1>0 \text{ or } \Re S_2^+(s_1)>0\}
\]
thanks to the formula
\begin{equation}
\label{eq:continuation}
F_1(s_1)=F_1(q_2/r_2)+\dfrac{\psi_2(s_1,S_2^+(s_1)) \left[F_2(q_1/r_1)-F_2(S_2^+(s_1))\right]-F_0}{\psi_1(s_1,S_2^+(s_1))}.
\end{equation}
The analogous result holds for $F_2$.
\end{lem}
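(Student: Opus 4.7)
The strategy relies on the fact that, when $\psi(s_1,s_2)=0$, the right-hand side of the kernel equation \eqref{kernel_equation_compound_Poisson_model} must vanish as well, yielding an algebraic relation that determines $F_1(s_1)$ in terms of $F_2$ evaluated at the corresponding zero $s_2=S_2^+(s_1)$. Solving this relation for $F_1(s_1)$ produces precisely \eqref{eq:continuation}. The right-hand side of \eqref{eq:continuation} is built from rational expressions in $s_1$ and $S_2^+(s_1)$ together with $F_2(S_2^+(s_1))$; it is therefore meromorphic on the set $\{s_1 : \Re S_2^+(s_1)>0\}$, where $S_2^+$ is an analytic branch (away from the cuts $[x_1,x_2]\cup[x_3,x_4]\subset\mbR$) and $F_2$ is defined by its convergent Laplace integral.

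Next I would establish the identity on a non-trivial overlap. Since $p_1\in[0,1]$, the integrals defining $F_1$, $F_2$ and $F$ converge absolutely and are analytic on the corresponding right half-planes, so the kernel equation \eqref{kernel_equation_compound_Poisson_model} holds as an identity of analytic functions on $\{\Re s_1>0,\ \Re s_2>0\}$. A short computation of $S_2^+$ at the origin (using $\psi(0,\cdot)=0$ whose positive root is $y_0$) gives $S_2^+(0)=y_0>0$, so by continuity the set $\Omega=\{s_1:\Re s_1>0\}\cap\{s_1:\Re S_2^+(s_1)>0\}$ is an open non-empty region containing a neighbourhood of $0$ in the right half-plane. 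For $s_1\in\Omega$ one has $\psi(s_1,S_2^+(s_1))=0$ by the definition of the branch, and both $F_1(s_1)$ and $F_2(S_2^+(s_1))$ are well-defined by their Laplace integrals; substituting $s_2=S_2^+(s_1)$ into \eqref{kernel_equation_compound_Poisson_model} and solving for $F_1(s_1)$ yields \eqref{eq:continuation} on $\Omega$, provided $\psi_1(s_1,S_2^+(s_1))\not\equiv 0$.

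To finish, I would invoke the identity principle: $F_1$ is analytic on $\{\Re s_1>0\}$, the right-hand side of \eqref{eq:continuation} is meromorphic on $\{\Re S_2^+(s_1)>0\}$, and the two agree on the non-empty connected open overlap $\Omega$. They therefore glue into a single meromorphic function on the union $\{\Re s_1>0\}\cup\{\Re S_2^+(s_1)>0\}$, which is the claimed analytic continuation. The main technical point I expect to be delicate is controlling the branch structure of $S_2^+$ (the chosen branch must remain analytic across the portion of the boundary of $\Omega$ that one wishes to cross) and verifying that $\psi_1(s_1,S_2^+(s_1))$ does not vanish identically, so that \eqref{eq:continuation} is genuinely a meromorphic formula rather than an indeterminate one; isolated zeros of that denominator give the isolated poles that meromorphy permits. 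The corresponding statement for $F_2$ is obtained by the symmetric argument, interchanging the roles of the two coordinates.
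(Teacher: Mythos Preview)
Your proposal is correct and follows essentially the same route as the paper: both arguments observe that the kernel equation with $s_2=S_2^+(s_1)$ yields~\eqref{eq:continuation} on the overlap $\{\Re s_1>0\}\cap\{\Re S_2^+(s_1)>0\}$, verify this overlap is non-empty via $S_2^+(0^+)=y_0>0$, and then invoke the principle of analytic continuation to extend $F_1$ meromorphically to the union. Your additional remarks about the branch structure of $S_2^+$ and the non-identical vanishing of $\psi_1(s_1,S_2^+(s_1))$ are the right technical caveats, though the paper leaves them implicit.
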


\begin{proof}
We are going to use the principle of analytic continuation. The Laplace transforms $F_i(s)$ are analytic on $\{s\in\mbC\colon \Re s >0\}$. According to the kernel equation~\eqref{kernel_equation_compound_Poisson_model}, for $s_1$ and $s_2$ with positive real parts and such that $\psi(s_1,s_2)=0$ we have
\[
0=\psi_1(s_1,s_2)(F_1(s_1)-F_1(q_2/r_2))+\psi_2(s_1,s_1)(F_2(s_2)-F_2(q_1/r_1))+F_0.
\]
When $s_1 \to 0$ for $s_1>0$ we have $S_2^+(s_1)\to\frac{\lambda_2}{c_2}-q_2=y_0>0$. Thus the open connected set
\[
D\colon\{s_1\in\mbC\colon \Re S_2^+(s_1)>0\}
\]
intersects the open set $\{s_1\in\mbC\colon \Re s_1>0\}$. For $s_1$ in this intersection the equation~\eqref{eq:continuation} is satisfied. Then, defining $F(s_1)$ as in~\eqref{eq:continuation} we extend meromorphically $F_1$ to the whole $D$ thanks to the principle of analytic continuation. See Figure~\ref{fig:domainD} representing the domain $D$.
\end{proof}

\begin{lem}[Domain $\mcD_1$]
\label{lem:domain}
The set $\mcD_1$ is included in $\{s_1\in\mbC\colon \Re s_1>0\text{ or } \Re S_2^+(s_1)>0\}$ and $F_1$ is then meromorphic on $\mcD_1$.
\end{lem}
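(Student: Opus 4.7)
The plan is to show the inclusion $\mcD_1\subseteq\{s_1:\Re s_1>0\}\cup\{s_1:\Re S_2^+(s_1)>0\}$; once this is established, Lemma~\ref{lem:continuation} immediately yields that $F_1$ extends meromorphically to $\mcD_1$. Since $F_1$ is already analytic on $\{\Re s_1>0\}$, the remaining task is to prove $\Re S_2^+(s_1)>0$ for every $s_1\in\mcD_1$ with $\Re s_1\les 0$.

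I would begin by verifying that $S_2^+$ is analytic on $\Omega:=\mcD_1\cap\{\Re s_1\les 0\}$. By Lemma~\ref{lem:branchpoint} together with the assumption $\mu_1=c_1-\lambda_1/q_1<0$ (equivalent to $\lambda_1q_1/c_1>q_1^2$), the cut $[x_1,x_2]\subset(-q_1,0)$ lies strictly inside the closed disc bounded by $\mcC_1$ and is therefore disjoint from $\mcD_1$; the other cut $[x_3,x_4]$ lies in the open right half-plane, hence disjoint from $\{\Re s_1\les 0\}$. Consequently $\Re S_2^+$ is harmonic on the open interior of $\Omega$, and the claim reduces to a minimum principle: it suffices to show $\Re S_2^+>0$ on the finite boundary and to control the behaviour at infinity. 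The finite boundary of the open interior of $\Omega$ consists of the arc $\mcC_1\cap\{\Re s_1\les 0\}$ and the two imaginary rays $\{iv:|v|\ges v_0\}$ with $v_0:=\sqrt{\lambda_1q_1/c_1-q_1^2}$.

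On the arc, every point is of the form $s_1=S_1^\pm(y)$ for some real $y\in[y_3,y_4]$ (equivalently $[y_1,y_2]$), so at least one of $S_2^\pm(s_1)$ is real; a continuity argument linking back to the reference value $S_2^+(0)=y_0>0$ identifies this real value as $S_2^+(s_1)\in[y_3,y_4]$, which is positive. On the imaginary axis I would take the real part of the kernel identity $\psi(s_1,S_2^+(s_1))=0$ to obtain
\[
\Re s_1\bigl[c_1-\tfrac{\lambda_1q_1}{|q_1+s_1|^2}\bigr]+\Re S_2^+\bigl[c_2-\tfrac{\lambda_2q_2}{|q_2+S_2^+|^2}\bigr]=\lambda_1\tfrac{|s_1|^2}{|q_1+s_1|^2}+\lambda_2\tfrac{|S_2^+|^2}{|q_2+S_2^+|^2},
\]
and use $\Re s_1=0$ together with the defining inequality of $\mcD_1$, namely $c_1-\lambda_1q_1/|q_1+s_1|^2>0$, to conclude $\Re S_2^+(iv)>0$. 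At infinity, the uniform normalisation $\sqrt{d(s_1)}/s_1^2\to-c_1$ (single-valued on the cut plane) gives $S_2^+(s_1)\sim-c_1s_1/c_2$, so $\Re S_2^+(s_1)\to+\infty$ as $|s_1|\to\infty$ with $\Re s_1\to-\infty$. A Phragm\'en--Lindel\"of-type minimum principle on the simply connected unbounded domain $\Omega$ then yields $\Re S_2^+>0$ throughout.

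The delicate step is the boundary analysis on the arc $\mcC_1\cap\{\Re s_1\les 0\}$: since $\widetilde d$ has both a negative root interval $[y_1,y_2]\subset(-q_2,0)$ and a positive one $[y_3,y_4]$, one must rule out the possibility that $S_2^+$ lands in $[y_1,y_2]$ along part of the arc. The cleanest route I see is to track $S_2^+$ continuously from a reference point where the picture is explicit, for instance $s_1$ real large negative, where $S_2^+(s_1)\to+\infty$ follows directly from the branch convention, and then to exploit connectedness of the arc to transport the conclusion. A similar care is needed on the imaginary axis in order to exclude the pathological case in which $S_2^+(iv)$ lies inside the companion disc $|q_2+s_2|^2\les\lambda_2q_2/c_2$ and the bracket multiplying $\Re S_2^+$ in the displayed identity changes sign, making the one-line positivity argument inconclusive.
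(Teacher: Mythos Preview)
Your approach is essentially the paper's: reduce to showing $\Re S_2^+>0$ on $\mcD_1\cap\{\Re s_1\les 0\}$, verify positivity on the circle $\mcC_1$ (where $S_2^+$ lands in $[y_3,y_4]$) and at infinity via $S_2^+(s_1)\sim -c_1s_1/c_2$, then invoke the minimum principle for the harmonic function $\Re S_2^+$. The paper's argument is terser and does not separately treat the imaginary-axis portion of the boundary, whereas you do; the delicate points you flag on the arc and on the imaginary rays can indeed be closed by the continuity arguments you outline (track $S_2^+$ from the real reference point $-q_1+\sqrt{\lambda_1q_1/c_1}\in\mcC_1$, where the branch is unambiguously in $[y_3,y_4]$, and note that $S_2^+(iv)$ cannot cross the companion circle $\mcC_2$ since on $\mcC_2$ the bracket vanishes while the right-hand side of your identity stays strictly positive).
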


\begin{proof}
It is enough to show that $\mcD_1\cap \{s_1\in\mbC\colon \Re s_1<0\} $ is included in the domain $D$. See Figures~\ref{fig:courbes} and~\ref{fig:domainD} to visualize these sets. By definition if $s_1\in\mcC_1$, we have $S_2^+(s_1)\in [y_3,y_4]$ and then $\Re S_2^+(s_1)>0$. We deduce that the circle $\mcC_1$ is included in $D$. Furthermore, remark that
\[
S_2^+(s_1)\underset{\abs*{s_1}\to\infty}{\sim} -\dfrac{c_1}{c_2}s_1,
\]
which implies that when $s_1$ is large and such that $\Re s_1<0$ we have  $\Re S_2^+(s_1)>0$. The maximum principle applied to the function $S_2^+(s_1)$ implies that $\Re S_2^+(s_1)$ is positive on the set $\mcD_1\cap \{s_1\in\mbC\colon \Re s_1<0\} $. We conclude with Lemma~\ref{lem:continuation}.
\end{proof}

\begin{figure}[ht!]
\begin{center}
\includegraphics[scale=1]{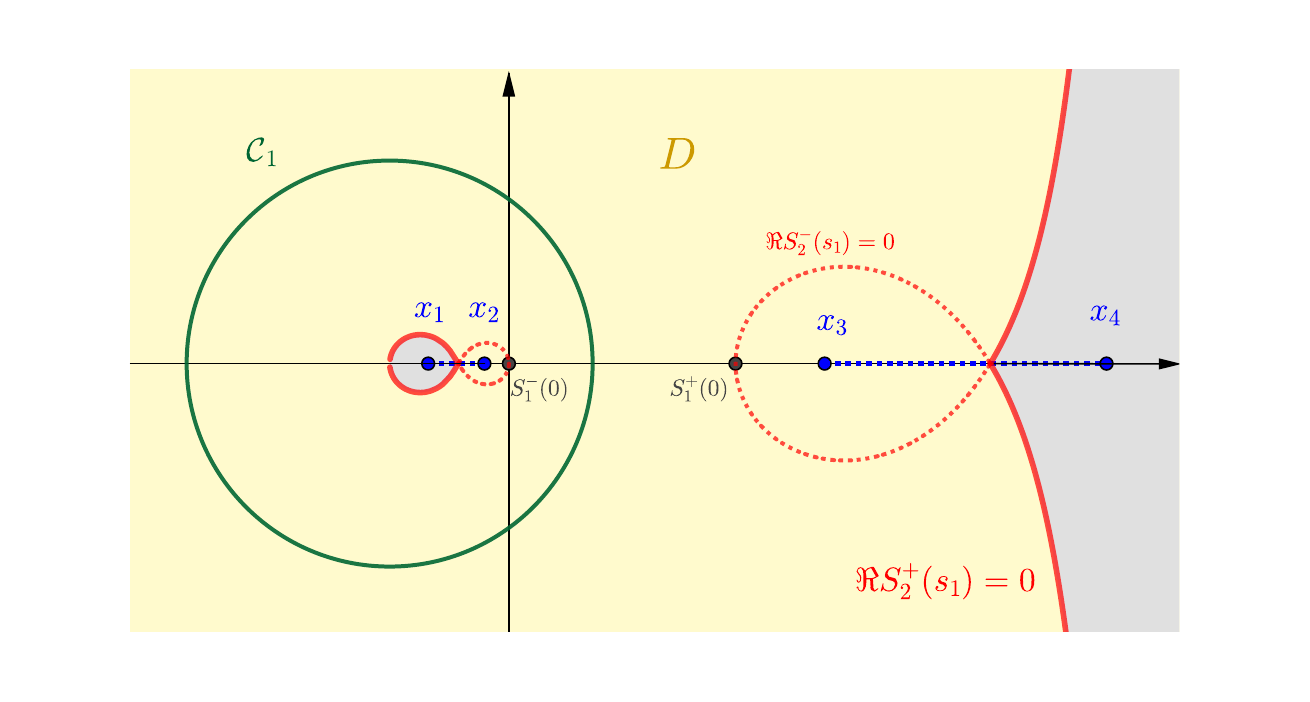}
\vspace{-0.6cm}
\caption{Representation of the $s_1$-complex plane: the domain $D\coloneqq\{s_1\in\mbC\colon \Re S_2^+(s_1)>0\}$ is in yellow, the red curve is the set $\{s_1\in\mbC\colon \Re S_2^+(s_1)=0\}$. The red dotted curve is the set $\{s_1\in\mbC\colon \Re S_2^-(s_1)=0\}$ (note that we do not use this curve).}
\label{fig:domainD}
\end{center}
\end{figure}

Let us recall that $x_2$ and $y_2$ are the roots defined in Lemma~\ref{lem:branchpoint}.

\begin{lem}[Poles of $F_1$ and $F_2$]
\label{lem:poleD}
The polynomial
\begin{equation}
\label{eq:polynomial}
P(s_1)\coloneqq (s_1-q_2/r_2)(s_1+q_1)(r_2c_2-c_1)+\lambda_2(s_1+q_1)+\lambda_1(s_1-q_2/r_2)
\end{equation}
has two real roots $s_1^p\in (-q_1,0)$ and $\widetilde{s}_1^p$ when $r_2c_2-c_1\neq 0$ and one real root $s_1^p\in (-q_1,0)$ when $r_2c_2-c_1=0$.

The meromorphic function $F_1(s_1)$ has at most two poles in $\{s_1\in\mbC\colon \Re s_1>0 \text{ or } \Re S_2^+(s_1)>0\}$ which are $0$ and $s_1^p$:
\begin{itemize}
\item
$0$ is always a simple pole of $F_1$,

\item
$s_1^p$ is a (simple) pole of $F_1$ if and only if $\psi_1(x_2,S_2^\pm(x_2))<0$.
\end{itemize}
Furthermore, $F_1$ has no poles in $\mcD_1$ and is analytic on this set.

In the same way we define $s_2^p\in (-q_2,0)$ which is a (the only) pole of $F_2$ if and only if $\psi_2(S_1^\pm(y_2),y_2)<0$.
\end{lem}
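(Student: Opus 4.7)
The plan is to analyze the meromorphic extension~\eqref{eq:continuation} of $F_1$: poles of $F_1$ in the extended domain arise only where the denominator $g(s_1) := \psi_1(s_1, S_2^+(s_1))$ vanishes without cancellation in the numerator. I first locate the zeros of $g$. Setting $\psi_1(s_1,s_2)=0$ gives $\lambda_2/(1+s_2/q_2) = c_2(q_2 - r_2 s_1)$; substituting this into $\psi(s_1,s_2)=0$, grouping the $\lambda_1$- and $\lambda_2$-terms, and factoring out $s_1$ yields an expression that, after clearing denominators and dividing by $r_2$, is precisely $P(s_1)$. Thus the zeros of $g$ are $s_1=0$ together with the roots of $P$. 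A direct computation gives $P(-q_1) = -\lambda_1(q_1 + q_2/r_2) < 0$ and, using~\eqref{A2}, $P(0) = (q_1 q_2/r_2)(r_2|\mu_2|-|\mu_1|) > 0$, so the intermediate value theorem produces a root $s_1^p \in (-q_1, 0)$; when $r_2 c_2 \neq c_1$, the sign of the leading coefficient $r_2 c_2 - c_1$ places the second root $\widetilde{s}_1^p$ either in $(-\infty,-q_1)$ or in $(0,+\infty)$.

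Next I determine which zeros become poles. At $s_1 = 0$: since Theorem~\ref{thm:domination} gives $p_1(u,0)\to 1$ as $u\to\infty$, an Abelian theorem yields $s_1 F_1(s_1)\to 1$ as $s_1\downarrow 0$, hence a simple pole with residue $1$. At $s_1^p$: the map $g$ is continuous on $[x_2, 0]$ with $g(0) = \psi_1(0, y_0) = 0$, so the root of $P$ lies in $(x_2, 0)$ if and only if $g(x_2) = \psi_1(x_2, S_2^\pm(x_2)) < 0$; on this interval $S_2^+$ is real and positive, placing it inside the extended domain, and $g$ has a simple zero at $s_1^p$. Combined with nonvanishing of the numerator of~\eqref{eq:continuation} there, this produces a simple pole. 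If $g(x_2) \geq 0$, the root of $P$ lies outside $(x_2, 0)$; a case analysis based on $\Re S_2^+$, together with the boundedness of $F_1$ on the original half-plane $\{\Re s_1 > 0\}$, rules it out of the extended domain.

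It remains to rule out other poles. Candidate singularities of the numerator of~\eqref{eq:continuation} come from (i) poles of $\psi_2(s_1, S_2^+(s_1))$, and (ii) poles of $F_2(S_2^+(s_1))$. For (i), the pole at $s_1 = -q_1$ is verified to lie outside the extended domain, while the pole at $S_2^+(s_1) = q_1/r_1$ is cancelled by the vanishing factor $F_2(q_1/r_1) - F_2(S_2^+(s_1))$. For (ii), the poles occur at $S_2^+(s_1) \in \{0, s_2^p\}$; the potential extra pole at $s_1 = x_0 > 0$ (where $S_2^+(x_0) = 0$) is removable thanks to $\hat{F}(x_0, 0+) = 0$ from Theorem~\ref{thm:domination}, and the $s_2^p$-counterpart is ruled out by the symmetric analysis. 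Finally, both $0$ and $s_1^p$ lie in $(-q_1, 0)$, which is contained in the open disc $\{s_1 : |s_1 + q_1| < \sqrt{\lambda_1 q_1/c_1}\}$ since $\mu_1 < 0$ implies $\sqrt{\lambda_1 q_1/c_1} > q_1$; hence $F_1$ has no poles in $\mathcal{D}_1$. The symmetric argument handles the assertions about $F_2$.

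The hard part will be the cancellation analysis in the preceding paragraph: several apparent poles of~\eqref{eq:continuation} are only removable through probabilistic boundary conditions from Theorem~\ref{thm:domination} (notably $\hat{F}(x_0, 0+) = 0$), and the book-keeping between the algebraic zeros/poles of the kernel components and these removability arguments must be carried out with care to ensure nothing is missed.
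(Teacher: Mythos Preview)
Your overall strategy coincides with the paper's: reduce the system $\psi=\psi_1=0$ to $s_1 P(s_1)=0$, locate $s_1^p\in(-q_1,0)$ via the signs $P(-q_1)<0$, $P(0)>0$, and then decide whether $s_1^p$ produces a pole of the continuation~\eqref{eq:continuation}. Your treatment of potential numerator singularities (poles of $\psi_2$ and of $F_2\circ S_2^+$, removability via $\hat F(x_0,0+)=0$) is a welcome addition that the paper only implicitly handles.

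There is, however, a genuine gap in your pole/non-pole dichotomy for $s_1^p$. You assert that ``the root of $P$ lies in $(x_2,0)$ if and only if $g(x_2)<0$'' and then that if $g(x_2)\ge 0$ the root is ``ruled out of the extended domain''. Neither implication is established. The location of $s_1^p$ (a root of the polynomial $P$) is not a priori governed by the sign of $g(x_2)=\psi_1(x_2,S_2^{\pm}(x_2))$; and even when $s_1^p\in(x_2,0)$ you still need to show that the intersection of $\psi_1=0$ with the kernel curve occurs on the $S_2^+$ branch rather than $S_2^-$, since both branches are real there. Conversely, when $g(x_2)\ge 0$, the point $s_1^p$ may well remain inside the extended domain (e.g.\ on the cut $(x^-,x_2)$ one has $\Re S_2^+>0$), so ``outside the domain'' is not the operative reason. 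The correct mechanism, which the paper invokes geometrically, is branch selection: one checks that the relevant intersection $(s_1^p,\cdot)$ of the hyperbola $\psi_1=0$ with the oval $\psi=0$ lies on the $S_2^+$ arc precisely when the branch point $(x_2,S_2^{\pm}(x_2))$ satisfies $\psi_1<0$, and on the $S_2^-$ arc otherwise; in the latter case $\psi_1(s_1^p,S_2^+(s_1^p))\neq 0$ and no pole arises. The same branch argument disposes of $\widetilde s_1^p<-q_1$. Once you replace the ``outside the domain'' reasoning by this branch analysis, your proof is complete and matches the paper's.

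A small cosmetic point: $0$ is not in the open interval $(-q_1,0)$; to conclude $0\notin\mathcal D_1$ just use $|0+q_1|=q_1<\sqrt{\lambda_1 q_1/c_1}$, which is exactly $\mu_1<0$.
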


\begin{proof}
The function $F_1$ is initially defined as a Laplace transform which converges on $\{s_1\in\mbC\colon \Re s_1>0\}$. Thus, $F_1$ has no poles on this set. The limits in Theorem~\ref{thm:domination} imply that $\hat{F}_1(0+)=1$ (and $\hat{F}_2(0+)=0$) and we deduce that $0$ is a simple pole of $F_1$ (and that $0$ is not a pole of $F_2$). The analytic continuation of $F_1$ is obtained thanks to formula~\eqref{eq:continuation}. Therefore, the only poles of $F_1$ comes from the $s_1$ of real part negative such that
\[
\psi_1(s_1,S_2^+(s_1))=0.
\]
First, we show that the following system has three solutions: $0$, $s_1^p$ and $\widetilde{s}_1^p$. We have
\begin{align*}
\begin{cases}
\psi(s_1,s_2)=0\\
\psi_1(s_1,s_2)=0
\end{cases}
\Leftrightarrow
\begin{cases}
s_1(c_1-\dfrac{\lambda_1}{q_1+s_1})+s_2(c_2-\dfrac{\lambda_2}{q_2+s_2})=0\\
-\dfrac{\lambda_2}{(q_2+s_2)}=\dfrac{c_2(s_1r_2-q_2)}{q_2}
\end{cases}
\\
\Leftrightarrow
\begin{cases}
s_1\left((c_1-\dfrac{\lambda_1}{q_1+s_1})+s_2\dfrac{c_2 r_2}{q_2}\right)=0\\
s_2=\dfrac{\lambda_2q_2}{c_2(q_2-s_1r_2)}-q_2
\end{cases}
\Leftrightarrow
\begin{cases}
s_1P(s_1)=0\\
s_2=\dfrac{\lambda_2q_2}{c_2(q_2-s_1r_2)}-q_2
\end{cases}
\end{align*}
where $P(s_1)$ is a second degree polynomial defined by~\eqref{eq:polynomial}. Notice that
\[
P(0)=q_1q_2\left(c_1-\dfrac{\lambda_1}{q_1}- r_2\left(c_2-\dfrac{\lambda_2}{q_2}\right)\right)>0
\]
which is positive thanks to assumption~\eqref{A2} (where $\mu_i=c_i-\lambda_i/q_i$) and that
\[
P(-q_1)=-\lambda_1(q_1+q_2/r_2)<0.
\]
We deduce that the two roots of $P$ are real and that one of them that we denote by ${s_1}^p$ satisfy $-q_1<s_1^p<0$ and then $s_1^p\notin\mcD_1$. We have $s_1^p$ is a (simple) pole of $F_1$ if and only if $\psi_1(s_1^p,S_2^+(s_1^p))=0$, i.e. $\psi_1(x_2,S_2^\pm(x_2))<0$, see Figure~\ref{fig:intersec} for a geometric representation. We now show that the second root of $P$ denoted by $\widetilde{s}_1^p$ is not a pole of $F_1$. Firstly, this is clearly the case when $\widetilde{s}_1^p>0$. Secondly, $\widetilde{s}_1^p<-q_1<0$ is not a pole of $F_1$, because we have $\psi_1(\widetilde{s}_1^p,S_2^-(\widetilde{s}_1^p))=0$, but $\psi_1(\widetilde{s}_1^p,S_2^+(\widetilde{s}_1^p))\neq 0$, see Figure~\ref{fig:intersec} for a geometric representation.
\end{proof}

\begin{figure}[ht!]
\begin{center}
\includegraphics[scale=0.24]{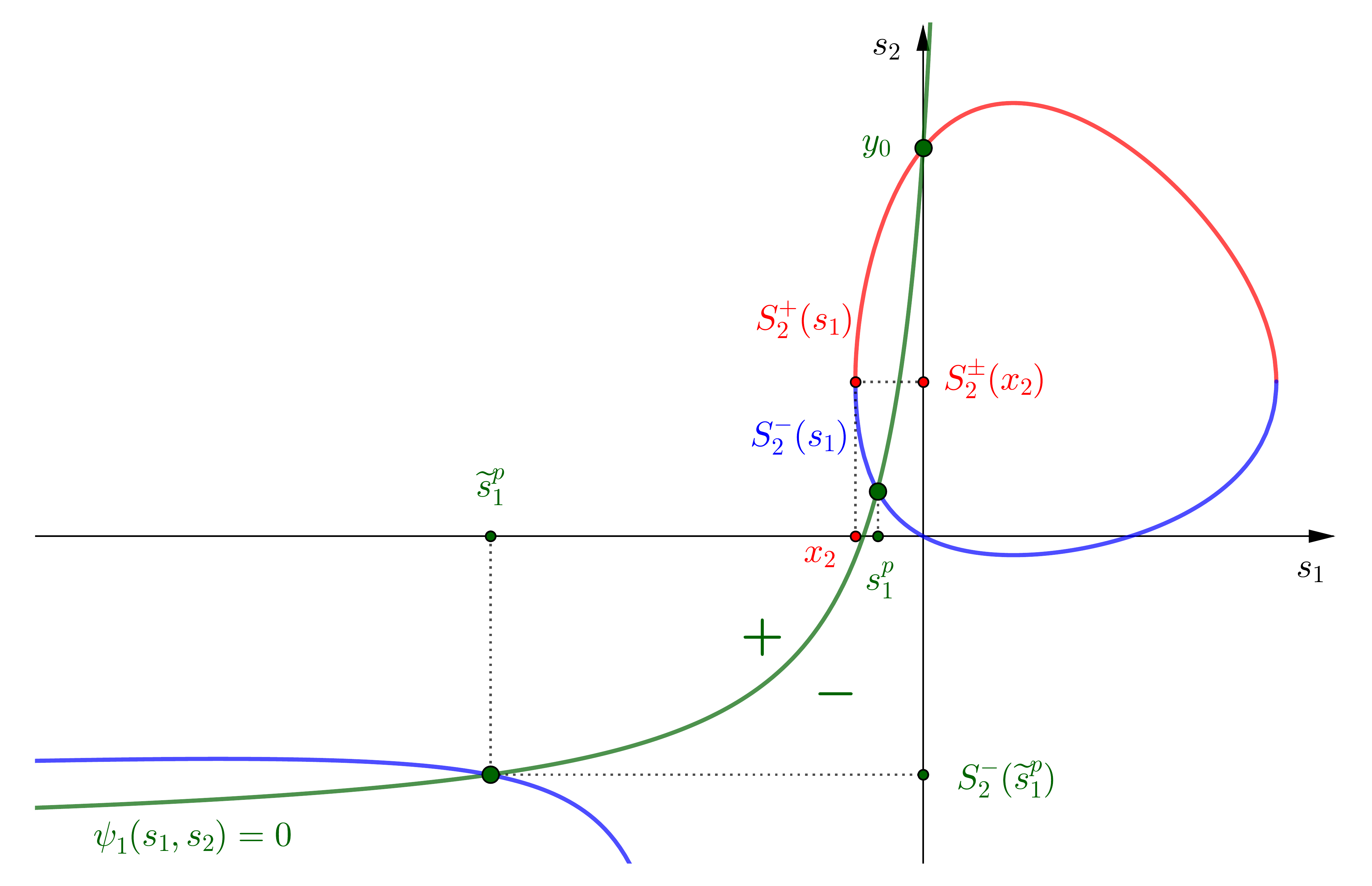}
\vspace{-0.4cm}
\caption{In green the curve $\psi_1(s_1,s_2)=0$ and its intersections with the curve $\psi(s_1,s_2)=0$. In this case $\psi_1(x_2,S_2^\pm(x_2))>0$ and then $s_1^p$ is not a pole of $F_1$.}
\label{fig:intersec}
\end{center}
\end{figure}

Our next result establish the rate of decay of $p_2(u,0)=1-p_1(u,0)$. It is noted that the analogous result holds true for $p_1(0,v)$ as $v\to\infty$.

\begin{prop}[Asymptotics of domination]
\label{prop:asympt}
The asymptotic behaviour of $p_1(u,0)$ as $u\to\infty$ is given by
\[
1-p_1(u,0)\sim C
\begin{cases}
e^{us_1^p} & \text{ if } \psi_1(x_2,S_2^\pm(x_2))<0,\\
u^{-\frac{3}{2}}e^{u x_2} & \text{ if } \psi_1(x_2,S_2^\pm(x_2))>0,\\
u^{-\frac{1}{2}}e^{u x_2} & \text{ if } \psi_1(x_2,S_2^\pm(x_2))=0,
\end{cases}
\]
for some constant $C$ which depends on the case, where ${s}_1^p$ is defined in Lemma~\ref{lem:poleD}.
\end{prop}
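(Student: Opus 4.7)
The plan is to analyse $g(u) := 1 - p_1(u,0)$ via its Laplace transform $\widehat g(s_1) = 1/s_1 - F_1(s_1)$. Since $\hat F_1(0+)=1$ by Theorem~\ref{thm:domination}, $F_1$ has a simple pole at $s_1=0$ with residue $1$ that exactly cancels $1/s_1$; hence $\widehat g$ is analytic at the origin, and the decay of $g$ is governed by the singularities of $\widehat g$ closest to $0$ on the negative real axis.

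By Lemmas~\ref{lem:continuation}, \ref{lem:domain} and~\ref{lem:poleD}, the only candidates for singularities of $F_1$ in $(x_2,0]$ are the (possible) simple pole $s_1^p$ and the branch point $x_2$ of $S_2^+$ appearing through the continuation formula~\eqref{eq:continuation}. If $\psi_1(x_2,S_2^\pm(x_2))<0$, Lemma~\ref{lem:poleD} places a simple pole at $s_1^p\in(x_2,0)$, which is then the dominant singularity. If $\psi_1(x_2,S_2^\pm(x_2))>0$, there is no pole in $(x_2,0)$, the denominator $\psi_1(s_1,S_2^+(s_1))$ is nonzero at $x_2$, while $S_2^+(s_1) - S_2^+(x_2)$ is of order $\sqrt{s_1-x_2}$, so~\eqref{eq:continuation} endows $F_1$ itself with a $\sqrt{s_1-x_2}$ branch. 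If $\psi_1(x_2,S_2^\pm(x_2))=0$, then the denominator of~\eqref{eq:continuation} also vanishes like $\sqrt{s_1-x_2}$, producing after cancellation a singularity of type $(s_1-x_2)^{-1/2}$.

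The three asymptotic regimes then follow by a standard Laplace-transform singularity analysis. A simple pole at $s_1^p$ gives $g(u)\sim C e^{us_1^p}$ by a residue/Heaviside computation. A $(s_1-x_2)^{1/2}$ singularity in $\widehat g$ translates into $C u^{-3/2} e^{ux_2}$, and a $(s_1-x_2)^{-1/2}$ singularity into $C u^{-1/2} e^{ux_2}$, via the Laplace analogue of the Flajolet--Odlyzko transfer theorem, or equivalently by deforming the Bromwich inversion contour into a Hankel-type path around $x_2$. In each case the constant $C$ is obtained from an explicit Puiseux expansion of the right-hand side of~\eqref{eq:continuation} at the dominant singularity, using the simple rational structure of $\psi_1,\psi_2$ and the leading coefficient of $\sqrt{d(s_1)}$ at $x_2$.

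The main technical obstacle is the justification of the contour deformation: one must construct an indented contour passing strictly to the left of $s_1^p$ (resp.\ $x_2$) along which $F_1$ extends analytically apart from the dominant singularity itself, and along which $\widehat g$ decays sufficiently at infinity to close the contour. Analyticity in the requisite region is provided by Lemma~\ref{lem:domain} in $\mcD_1$ together with the explicit form of~\eqref{eq:continuation}; the decay on vertical lines follows from boundedness of $F_1(s_1)$ along such lines (a consequence of $p_1\le 1$) combined with the rational decay of the other factors in~\eqref{eq:continuation}, which is enough to subtract the principal part and bound the remainder by a negligible exponential correction.
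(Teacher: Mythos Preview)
Your proposal is correct and follows essentially the same approach as the paper: both identify the dominant singularity of the Laplace transform $1/s_1-F_1(s_1)$ using the continuation formula~\eqref{eq:continuation} and Lemma~\ref{lem:poleD}, classify its local type in the three cases, and invoke a Tauberian/transfer theorem (the paper cites Doetsch, you invoke a Hankel-contour argument). Your write-up is somewhat more explicit about why the $\sqrt{s_1-x_2}$ and $(s_1-x_2)^{-1/2}$ behaviours arise from the branch of $S_2^+$ and the vanishing of the denominator, and about the contour-deformation justification, but the underlying argument is the same.
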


\begin{proof}
The asymptotics of a function derives from the largest singularity of its Laplace transform, see for example~\cite[Theorem~37.1]{doetsch_introduction_1974}. Assume that $f(u)$ is a function, $L(s)$ is its Laplace transform, and $a$ is the largest singularity of order $k$ (i.e. in the neighbourhood of $a$ the Laplace transform $F$ behaves as $(s-a)^{-k}$ up to additive and multiplicative constants). Then apply the theorems stating that $f(u)$ is equivalent to $u^{k-1}e^{au}$ up to a constant as $u\to\infty$.

The Laplace transform of interest is $1/s_1-F_1(s_1)$. By Lemma~\ref{lem:poleD} the point $0$ is not a singularity, whereas ${s}_1^p$ is a simple pole and the largest singularity of $F_1$ if $\psi_1(x_2,S_2^\pm(x_2))<0$. In that case the asymptotics is then given by $Ce^{us_1^p}$ for some constant $C$. When $\psi_1(x_2,S_2^\pm(x_2))\ges 0$, the largest singularity is the branch point $x_2$. Thanks to the definition of $S_2^+$ and the analytic continuation formula~\eqref{eq:continuation} we obtain for some constants $C_i$ that
\[
F_1(s_1)\underset{s_1\to x_2}{=}
\begin{cases}
C_1+C_2\sqrt{s_1-x_2}+\Oh(s_1-x_2) & \text{ if } \psi_1(x_2,S_2^\pm(x_2))>0,\\
\dfrac{C_3}{\sqrt{s_1-x_2}}+\Oh(1) & \text{ if } \psi_1(x_2,S_2^\pm(x_2))=0.
\end{cases}
\]
The result now follows.
\end{proof}

\subsection{Boundary value problem and its solution}
\label{sec:BVP}
We are now ready to establish a boundary value problem (BVP) satisfied by $f_1$ defined in~\eqref{eq:F0tilde}. It is a Carleman homogeneous BVP which relies on the domain $\mcD_1$ and the boundary $\mcC_1$.

\begin{prop}[BVP]
\label{prop:BVP}
The function $f_1$ satisfies the following Carleman boundary value problem:
\begin{enumerate}[label=(\roman*)]
\item
\label{item:1}
$f_1(s_1)$ is analytic on $\mcD_1$;

\item
\label{item:2}
$\lim_{s_1\to\infty} f_1(s_1)=\dfrac{F_0}{\widetilde{F}_0}\dfrac{r_2}{q_2}-F_1(q_2/r_2)$;

\item
\label{item:3}
$f_1$ satisfies the boundary condition
\[
f_1(\overline{s_1})=G(s_1)f_1(s_1),\quad \forall\; s_1\in\mcC_1,
\]
where
\begin{equation}
\label{eq:Gdef}
G(s_1)\coloneqq\dfrac{\psi_1}{\psi_2}(s_1,S_2^+(s_1)) \dfrac{\psi_2}{\psi_1}(\overline{s_1},S_2^+(s_1)).
\end{equation}
\end{enumerate}
\end{prop}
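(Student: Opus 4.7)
The plan is to verify (i)--(iii) in turn, drawing on the analytic continuation results of Lemmas~\ref{lem:continuation}--\ref{lem:poleD} together with the homogeneous form~\eqref{eq:kerneleqsimple} of the kernel equation.

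Part (i) is essentially bookkeeping. Lemmas~\ref{lem:continuation} and~\ref{lem:domain} place $F_1$ meromorphic on $\mcD_1$, while Lemma~\ref{lem:poleD} asserts that the only candidate poles, $0$ and $s_1^p$, are excluded from $\mcD_1$. The exclusion of $s_1^p$ is in Lemma~\ref{lem:poleD}, and the exclusion of $0$ follows from~\eqref{A1}: the condition $\mu_1<0$ reads $\lambda_1>c_1q_1$, so $q_1^2<\lambda_1q_1/c_1$ places $0$ strictly inside the disc bounded by $\mcC_1$. The remaining rational term $(F_0/\widetilde{F}_0)(1/s_1-r_2/q_2)$ in the definition of $f_1$ has its sole singularity at $0\notin\mcD_1$, hence $f_1$ is analytic throughout $\mcD_1$.

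For part~(ii), I would read off the limit of $f_1$ at infinity from the continuation formula~\eqref{eq:continuation}. Since $S_2^+(s_1)\sim -(c_1/c_2)s_1$ as $\abs{s_1}\to\infty$, one has $\psi_1(s_1,S_2^+(s_1))\to c_2$ and $\psi_2(s_1,S_2^+(s_1))\to c_1$, while $F_2(S_2^+(s_1))\to 0$ because $S_2^+(s_1)$ eventually lies in the half-plane of absolute convergence of the Laplace integral $F_2$ (along other directions one invokes the symmetric extension formula). Substituting into~\eqref{eq:continuation} and using $F_0=c_1F_2(q_1/r_1)+c_2F_1(q_2/r_2)$ gives $F_1(s_1)\to 0$, so $f_1(s_1)\to -F_1(q_2/r_2)+(F_0/\widetilde{F}_0)(r_2/q_2)$, as required.

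Part~(iii) is the analytic heart. Evaluating~\eqref{eq:kerneleqsimple} at any pair $(s_1,s_2)$ with $\psi(s_1,s_2)=0$ yields
\[
\psi_1(s_1,s_2)f_1(s_1)+\psi_2(s_1,s_2)f_2(s_2)=0,
\]
and this identity persists under analytic continuation. Now fix $s_1\in\mcC_1$ and set $s_2=S_2^+(s_1)\in[y_3,y_4]\subset(0,\infty)$; by the very definition of $\mcC_1$ as $S_1^\pm([y_3,y_4])$, its conjugate $\overline{s_1}$ also lies on $\mcC_1$ and satisfies $\psi(\overline{s_1},s_2)=0$. Because $s_2$ lies in the region where $F_2$ is an absolutely convergent Laplace integral, the value $f_2(s_2)$ is unambiguously defined, and we obtain the two identities
\[
\psi_1(s_1,s_2)f_1(s_1)=-\psi_2(s_1,s_2)f_2(s_2),\qquad \psi_1(\overline{s_1},s_2)f_1(\overline{s_1})=-\psi_2(\overline{s_1},s_2)f_2(s_2).
\]
Eliminating $f_2(s_2)$ between them yields $f_1(\overline{s_1})=G(s_1)f_1(s_1)$ with $G$ exactly as in~\eqref{eq:Gdef}.

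The step I expect to be most delicate is the legitimacy of this last elimination pointwise on $\mcC_1$: one must verify that $\psi_2(s_1,S_2^+(s_1))\neq 0$ and $\psi_1(\overline{s_1},S_2^+(s_1))\neq 0$ on the boundary circle, and handle the endpoints $y_3,y_4$ where the two branches of $S_2$ coincide. A direct algebraic check based on the explicit formulas for $\psi_1,\psi_2$ in Proposition~\ref{prop:Poisson_kernel} together with~\eqref{A1}--\eqref{A2} should rule out degeneracies on an open dense subset of $\mcC_1$, whereupon the boundary identity extends to all of $\mcC_1$ by continuity of $f_1$ on $\overline{\mcD_1}$.
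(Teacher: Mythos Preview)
Your proposal is correct and follows essentially the same route as the paper: (i) via Lemmas~\ref{lem:domain} and~\ref{lem:poleD}, (iii) by evaluating the homogeneous kernel equation~\eqref{eq:kerneleqsimple} at the conjugate pair $(s_1,S_2^+(s_1))$ and $(\overline{s_1},S_2^+(s_1))$ and eliminating $f_2$. For (ii) the paper is more direct, simply invoking that the Laplace transform $F_1$ vanishes at infinity and reading off the constant from the definition of $f_1$; your argument via the continuation formula~\eqref{eq:continuation} reaches the same conclusion with more work, and your closing remark about non-vanishing of $\psi_1,\psi_2$ on $\mcC_1$ is a legitimate point that the paper leaves implicit.
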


\begin{proof}
Item~\ref{item:1} directly derives from Lemma~\ref{lem:domain} and Lemma~\ref{lem:poleD}. Item~\ref{item:2} comes from the fact that the Laplace transform $F_1$ converges to $0$ at infinity. Item~\ref{item:3} comes from the kernel equation~\eqref{eq:kerneleqsimple}. For $s_1\in\mcC_1$, we have $\overline{s_1}\in\mcC_1$ and $S_2^+(s_1)=S_2^+(\overline{s_1})$. We evaluate~\eqref{eq:kerneleqsimple} at $(s_1,S_2^+(s_1))$ and $(\overline{s_1},S_2^+(s_1))$ and we obtain the two equations
\[
\begin{cases}
0=\psi_1(s_1,S_2^+(s_1))f_1(s_1)+\psi_2(s_1,S_2^+(s_1))f_2(S_2^+(s_1)),\\
0=\psi_1(\overline{s_1},S_2^+(s_1))f_1(\overline{s_1})+ \psi_2(\overline{s_1},S_2^+(s_1))f_2(S_2^+(s_1)).
\end{cases}
\]
Eliminate $f_2(S_2^+(s_1))$ from these equations gives the boundary condition~\ref{item:3}.
\end{proof}

To solve the boundary value problem on $\mcD_1$ we need to introduce a conformal function which glues together the upper part and the lower part of the circle $\mcC_1$. This gluing function is a simple rational function and derives from the kernel. See Figure~\ref{fig:w} to visualize the gluing function.

\begin{lem}[Conformal gluing function $w$]
\label{lem:gluingw}
The function
\begin{equation}
\label{eq:wdef}
w(s_1)\coloneqq\dfrac{1}{2}\left(\dfrac{s_1+q_1}{\sqrt{\lambda_1q_1/c_1}}+ \dfrac{\sqrt{\lambda_1q_1/c_1}}{s_1+q_1}\right)
\end{equation}
satisfies the following properties
\begin{enumerate}[label=(\roman*)]
\item
$w$ is holomorphic in $\mcD_1$ and continuous on $\overline{\mcD_1}$;

\item
$w$ is one-to-one from $\mcD_1$ to $\mbC\setminus [-1,1]$;

\item
$w$ satisfies the boundary property
\[
w(s_1)=w(\overline{s_1}),\quad \forall\, s_1\in\mcC_1.
\]
\end{enumerate}
\end{lem}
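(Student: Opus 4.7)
The plan is to reduce everything to the classical Joukowski transform $J(z)=\tfrac12\bigl(z+1/z\bigr)$ via the affine substitution
\[
z=\frac{s_1+q_1}{R},\qquad R\coloneqq\sqrt{\lambda_1q_1/c_1}.
\]
Under this substitution, the disc bounded by $\mcC_1$ corresponds to $\{|z|\les 1\}$, its exterior $\mcD_1$ corresponds to $\{|z|>1\}$, and the definition \eqref{eq:wdef} gives $w(s_1)=J(z)$. The lemma therefore reduces to the well-known fact that $J$ is a conformal bijection from the exterior of the unit disc onto $\mbC\setminus[-1,1]$, together with one symmetry observation.

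For (i), the only singularity of the rational function $w$ is the simple pole at $s_1=-q_1$, which is the centre of the disc bounded by $\mcC_1$ and hence lies outside $\mcD_1$. Elsewhere $w$ is holomorphic, and it extends continuously (in fact holomorphically) across $\mcC_1$ to all of $\overline{\mcD_1}$.

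For (ii), I would argue at the level of $J$. Injectivity: if $J(z_1)=J(z_2)$ with $|z_1|,|z_2|>1$, then $(z_1-z_2)\bigl(1-1/(z_1z_2)\bigr)=0$, and since $|z_1z_2|>1$ the second factor is non-zero, forcing $z_1=z_2$. Image: parametrising $z=e^{i\theta}$ one sees $J(e^{i\theta})=\cos\theta\in[-1,1]$, so boundary values are excluded from the image of $\{|z|>1\}$; conversely, for any $w_0\in\mbC\setminus[-1,1]$ the equation $z^2-2w_0z+1=0$ has two roots with product $1$, neither on the unit circle (otherwise $w_0\in[-1,1]$), so exactly one root has modulus strictly greater than $1$, yielding surjectivity onto $\mbC\setminus[-1,1]$. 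Composition with the affine bijection $s_1\mapsto(s_1+q_1)/R$ transfers both properties to $w$ on $\mcD_1$.

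For (iii), since $w$ has real coefficients and $q_1,R>0$, the Schwarz reflection identity $w(\overline{s_1})=\overline{w(s_1)}$ holds everywhere in the domain of definition. On $\mcC_1$ we may write $s_1+q_1=Re^{i\theta}$, whence $w(s_1)=\tfrac12(e^{i\theta}+e^{-i\theta})=\cos\theta\in[-1,1]\subset\mbR$; being real, $\overline{w(s_1)}=w(s_1)$, which gives $w(\overline{s_1})=w(s_1)$. None of these steps is difficult; the only thing to verify carefully is the standard Joukowski bijection in Step~(ii), and even that is routine, so I expect no real obstacle.
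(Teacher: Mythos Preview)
Your proof is correct and is exactly the natural way to verify these three items; the paper itself dispatches the lemma in a single sentence (``the three items are derived by means of straightforward calculus''), and what you have written is precisely that calculus spelled out via the Joukowski substitution $z=(s_1+q_1)/R$. There is nothing to add.
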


\begin{proof}
Recall that $s_1\in\mcC_1$ if and only if $\abs*{s_1+q_1}^2=\frac{\lambda_1q_1}{c_1}$. The three items are derived by means of straightforward calculus.
\end{proof}

We write $\mcC_1^-$ (resp. $\mcC_1^+$) for the half circle defined by the intersection of $\mcC_1$ and the half plane of negative (resp. positive) imaginary part, see Figure~\ref{fig:w}. The circle $\mcC_1$ and the half circles $\mcC_1^\pm$ are counterclockwise oriented.

\begin{figure}[ht!]
\begin{center}
\includegraphics[scale=0.8]{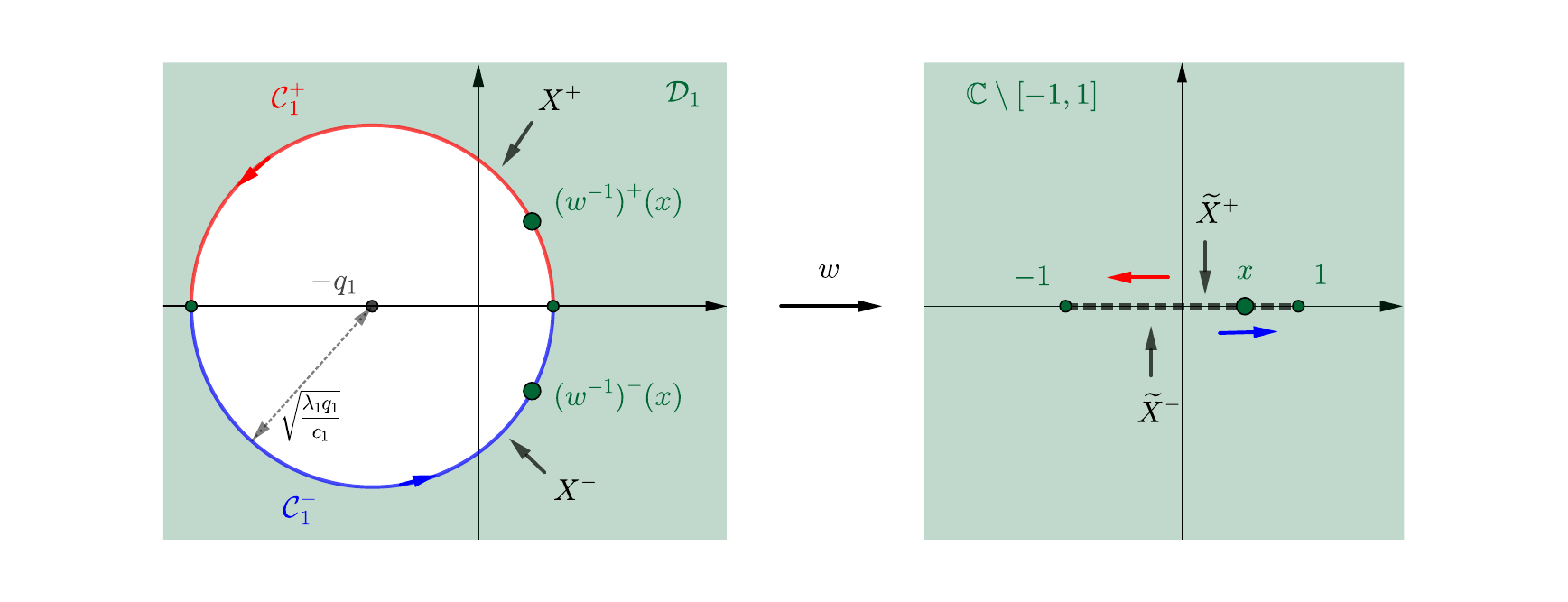}
\caption{Conformal gluing function $w$ is one to one from $\mcD_1$ to $\mcC\setminus [-1,1]$.}
\label{fig:w}
\end{center}
\end{figure}

To solve the BVP we need to compute the index which is defined by
\[
\chi\coloneqq\dfrac{1}{2\pi}[\arg G(s_1)]_{\mcC_1^-}=\dfrac{1}{2\pi} \left[\arg\dfrac{\psi_1}{\psi_2}(s_1,S_2^+(s_1))\right]_{\mcC_1}.
\]
The index represents the variation of the argument of $G(s_1)$ when $s_1$ lies on the half circle $\mcC_1^-$, that is the difference between initial and final value when the argument varies continuously along the half circle. The second equality comes from the definition of $G$ in~\eqref{eq:Gdef}. Thus, equivalently, it is also the variation of the argument of $\psi_1/\psi_2$ around the circle $\mcC_1$.

\begin{lem}[Index]
\label{lem:chipole}
The index $\chi$ given by
\begin{equation}
\label{eq:chidef}
\chi=
\begin{cases}
0\text{ if } q_2/r_2\les -q_1+\sqrt{\lambda_1q_1/c_1}\Leftrightarrow\ f_1\text{ has no zeros in $\mcD_1$,}\\
1\text{ if } q_2/r_2>-q_1+\sqrt{\lambda_1q_1/c_1}\Leftrightarrow\ f_1 \text{ has one zero ($q_2/r_2$) in $\mcD_1$.}
\end{cases}
\end{equation}
\end{lem}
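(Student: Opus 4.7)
The plan is to evaluate $\chi$ via the argument principle applied to a single auxiliary meromorphic function, and then to deduce the statement about the zeros of $f_1$ from the general theory of Carleman boundary value problems.

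First, I would rewrite $\chi$ as a winding number of the single function $h(s_1):=(\psi_1/\psi_2)(s_1,S_2^+(s_1))$. On $\mcC_1$ we have $S_2^+(s_1)\in\mbR$ (Lemma~\ref{lem:branchpoint}) and $S_2^+(\overline{s_1})=S_2^+(s_1)$, while the real coefficients of $\psi_1,\psi_2$ give $\psi_i(\overline{s_1},y)=\overline{\psi_i(s_1,y)}$ for real $y$. Substituting in~\eqref{eq:Gdef} yields $G(s_1)=h(s_1)/\overline{h(s_1)}$ on $\mcC_1$; in particular $|G|=1$ and $\arg G=2\arg h$. Combining this with the Schwarz reflection identity $h(\overline{s_1})=\overline{h(s_1)}$, a short computation with a continuous branch of $\arg h$ gives $[\arg h]_{\mcC_1^+,\mathrm{ccw}}=[\arg h]_{\mcC_1^-,\mathrm{ccw}}$, and hence
\[
\chi \;=\; \frac{1}{\pi}[\arg h]_{\mcC_1^-}\;=\;\frac{1}{2\pi}[\arg h]_{\mcC_1,\mathrm{ccw}}.
\]

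Second, I would apply the argument principle to $h$ on the exterior domain $\mcD_1$. Since $S_2^+(s_1)\sim -(c_1/c_2)s_1$ at infinity, a direct calculation gives $h(s_1)\to c_2/c_1\ne 0,\infty$, so there is no contribution from infinity. The branch cut $[x_3,x_4]\subset\mcD_1$ contributes nothing to the winding around $0$, as the two boundary values of $h$ there are complex conjugates. Consequently $\chi=P_h^{\mcD_1}-Z_h^{\mcD_1}$. I would then enumerate poles and zeros. The only pole of $\psi_1(s_1,s_2)$ in $s_1$ is $s_1=q_2/r_2$ (the other candidate $S_2^+(s_1)=-q_2$ is impossible since $\psi(\cdot,-q_2)\equiv\infty$), and this point lies in $\mcD_1$ iff $q_2/r_2>-q_1+\sqrt{\lambda_1q_1/c_1}$. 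The only pole of $\psi_2$ in $s_1$ is $s_1=-q_1$, the centre of $\mcC_1$, which lies inside the disk. The zeros of $\psi_1(s_1,S_2^+(s_1))$ correspond to the simultaneous system $\psi=\psi_1=0$, whose solutions $\{0,s_1^p,\widetilde s_1^p\}$ were identified in Lemma~\ref{lem:poleD}: the inequality $\mu_1<0$ gives $q_1<\sqrt{\lambda_1 q_1/c_1}$, so both $0$ and $s_1^p\in(-q_1,0)$ lie inside the disk, while $\widetilde s_1^p$ corresponds to the $S_2^-$-branch (cf.\ Figure~\ref{fig:intersec} and the proof of Lemma~\ref{lem:poleD}). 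A symmetric analysis of $\psi=\psi_2=0$ rules out any further poles of $h$ in $\mcD_1$. This leaves $Z_h^{\mcD_1}=0$ and $P_h^{\mcD_1}=\indf\{q_2/r_2>-q_1+\sqrt{\lambda_1q_1/c_1}\}$, yielding the claimed value of $\chi$.

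Third, for the equivalence with the zeros of $f_1$ in $\mcD_1$, observe that by the very definition~\eqref{eq:F0tilde} one has $f_1(q_2/r_2)=0$, so if $q_2/r_2\in\mcD_1$ then this is a zero of $f_1$ in $\mcD_1$. Appealing to the standard theory of Carleman boundary value problems (see~\cite[Ch.~5]{fayolle_random_2017}), the solution of the homogeneous BVP of Proposition~\ref{prop:BVP} with the boundedness at infinity prescribed by~\ref{item:2} has exactly $\chi$ zeros in $\mcD_1$, counted with multiplicity. Thus $\chi=1$ forces $q_2/r_2$ to be the unique zero, and $\chi=0$ forces $f_1$ to be zero-free in $\mcD_1$.

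The main obstacle I anticipate is the bookkeeping in Step 2, specifically deciding which of the candidate real roots of the systems $\psi=\psi_1=0$ and $\psi=\psi_2=0$ actually correspond to the $S_2^+$-branch rather than $S_2^-$; this is a geometric question about the position of these roots relative to the interval $[x_2,x_3]$ on which both branches are real and distinct, and hence relative to the regions where $S_2^+$ is selected by its analytic definition through $S_2^+(0)=y_0$. Justifying that the branch cut $[x_3,x_4]$ contributes nothing to the winding also warrants a separate short argument (e.g.\ by exploiting the complex-conjugate symmetry of the two boundary values of $h$ across the cut).
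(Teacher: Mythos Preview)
Your Step~1 and the general plan are sound, and the identity $\chi=\tfrac{1}{2\pi}[\arg h]_{\mcC_1}$ with $h=\tfrac{\psi_1}{\psi_2}(\cdot,S_2^+(\cdot))$ is exactly what the paper uses. The gap is in Step~2. The function $h$ is \emph{not} meromorphic on $\mcD_1$: the branch cut $[x_3,x_4]$ lies there, so the argument principle does not apply directly, and your claim that the cut ``contributes nothing'' because the two boundary values are complex conjugates is incorrect. If $h^+(t)=\overline{h^-(t)}$ on the cut, then $d\arg h^-=-d\arg h^+$, and traversing the slit (top from $x_3$ to $x_4$, then bottom from $x_4$ to $x_3$) gives a contribution $2\int_{x_3}^{x_4} d\arg h^+$, i.e.\ the winding of the real-axis-symmetric closed curve $h^\pm([x_3,x_4])$ around~$0$; there is no a~priori reason for this to vanish. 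Likewise, the ``symmetric analysis of $\psi=\psi_2=0$'' is not literally symmetric (you are locating points in the $s_1$-plane relative to the circle $\mcC_1$, which is built from the parameters of $X_1$, not $X_2$), and deciding which candidate roots sit on the $S_2^+$-branch in $\mcD_1$ is precisely the delicate issue.

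The paper avoids both problems by a small but decisive change of target: it applies the argument principle to $f_1$ rather than to $h$. From the boundary condition $f_1(\overline{s_1})=G(s_1)f_1(s_1)$ one gets
\[
\chi=\dfrac{1}{2\pi}[\arg G]_{\mcC_1^-}=\dfrac{1}{2\pi}\Big[\arg\dfrac{f_1(\overline{s_1})}{f_1(s_1)}\Big]_{\mcC_1^-}=-\dfrac{1}{2\pi}[\arg f_1]_{\mcC_1}=Z_{\mcD_1}(f_1)-P_{\mcD_1}(f_1).
\]
Since $f_1$ is genuinely meromorphic on $\mcD_1$ with no poles there (Lemmas~\ref{lem:domain} and~\ref{lem:poleD}), this yields $\chi=Z_{\mcD_1}(f_1)\ges 0$ with no branch-cut bookkeeping at all; together with $f_1(q_2/r_2)=0$ this already gives $\chi\ges 1$ whenever $q_2/r_2\in\mcD_1$. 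The matching upper bound $\chi\les 1$ (and $\chi=0$ in the other case) the paper obtains by a direct geometric analysis of the image curve $h(\mcC_1)$: it identifies the three real points $A,B,C$ of that curve and shows that the curve winds once around~$0$ precisely when $B<0$, equivalently $q_2/r_2>-q_1+\sqrt{\lambda_1q_1/c_1}$. Your Step~3 is essentially the first half of this argument; if you reroute Step~2 through $f_1$ instead of $h$, you recover the paper's approach and the branch-cut obstacle disappears.
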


\begin{proof}
Consider the curve $\frac{\psi_1}{\psi_2}(s_1,S_2^+(s_1))$ when $s_1$ lies on $\mcC_1$. This curve is numerically plotted in Figure~\ref{fig:index} in both cases of interest. Let us denote $A=\frac{\psi_1}{\psi_2}(-q_1-\sqrt{\lambda_1q_1/c_1},y_4)$ and $B=\frac{\psi_1}{\psi_2}(-q_1+\sqrt{\lambda_1q_1/c_1},y_3)$ the image by $\frac{\psi_1}{\psi_2}$ of the two real points of $\mcC_1$. Analysis of the equation defining this curve shows also that there is another double real point that we denote by~$C$.

We can show that $A$ and $C$ are always positive. On the other hand $B<0$ if and only if $q_2/r_2>-q_1+\sqrt{\lambda_1q_1/c_1}$. The last property comes from the fact that the line $s_1=q_2/r_2$ is the asymptote of the hyperbola $\psi_1(s_1,s_2)=0$ and the position of the point $(-q_1+\sqrt{\lambda_1q_1/c_1},y_3)$ w.r.t. this asymptote determines the sign of $B$. Now we see that when $q_2/r_2>-q_1+\sqrt{\lambda_1q_1/c_1}$ the curve of interest make a positive turn around the origin, i.e. $\chi=1$. In the other case, $B>0$ and the curve makes no turns around the origin, i.e. $\chi=0$.

\begin{figure}[ht!]
\begin{center}
\includegraphics[scale=0.95]{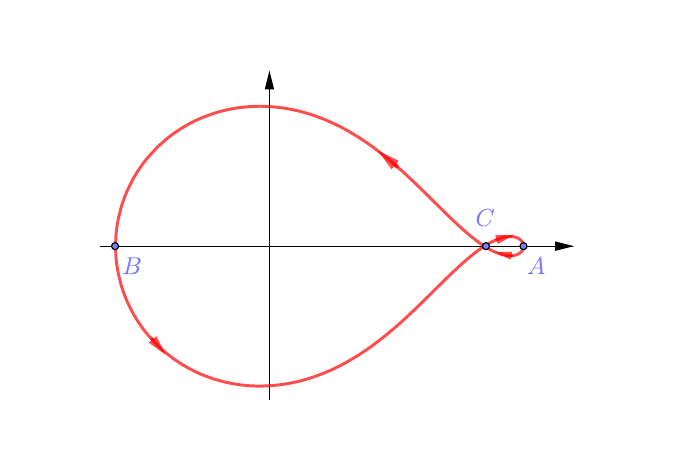}
\includegraphics[scale=0.95]{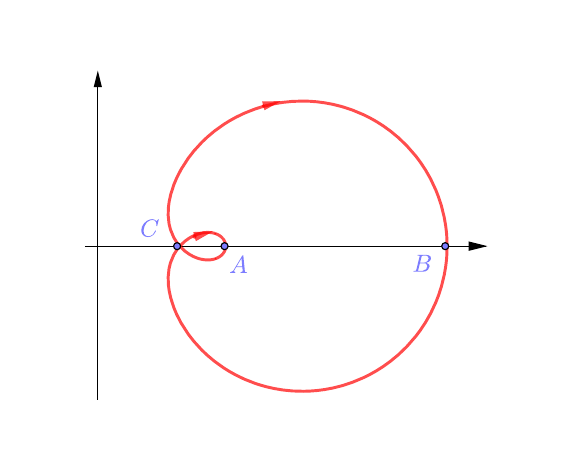}
\vspace{-0.8cm}
\caption{Plot of $\frac{\psi_1}{\psi_2}(s_1,S_2^+(s_1))$ when $s_1$ lies on $\mcC_1$. Left: $q_2/r_2\in\mcD_1$ and $\chi=1$; right: $q_2/r_2\notin\mcD_1$ and $\chi=0$.}
\label{fig:index}
\end{center}
\end{figure}

Alternatively, one may start by noticing that by the boundary condition of Proposition~\ref{prop:BVP}
\begin{align*}
\chi=\dfrac{1}{2\pi}\left[\arg\dfrac{f_1(\overline{s_1})}{f_1(s_1)}\right]_{\mcC_1^-}
=\dfrac{-1}{2\pi}\left[\arg{f_1(s_1)}\right]_{\mcC_1}=Z_{\mcD_1}(f_1)-P_{\mcD_1}(f_1),
\end{align*}
where $Z_{\mcD_1}(f_1)$ is the number of zeros (counted with multiplicity) of the meromorphic function $f_1$ in $\mcD_1\cup \{\infty\}$ and $P_{\mcD_1}(f_1)$ is the number of poles (counted with multiplicity) of $f_1$ in $\mcD_1\cup \{\infty\}$. By Lemma~\ref{lem:poleD} function $f_1$ has no poles in $\mcD_1\cup \{\infty\}$, so that $\chi\ges 0$ and it is left to analyse the zeros of~$f_1$ remembering that $f_1(q_2/r_2)=0$.
\end{proof}

We are now ready to present an explicit integral expression for $F_1$. The analogous result holds for $F_2$ and thus we obtain an explicit expression for $F$ via the kernel equation. Recall that $G$ is defined in equation~\eqref{eq:Gdef}, $w$ in~\eqref{eq:wdef}, $F_0$ in~\eqref{eq:C0}, $\widetilde{F}_0$ in~\eqref{eq:F0tilde} and $\chi$ is given in~\eqref{eq:chidef}.

\begin{thm}[Explicit expression for $F_1$]
\label{thm:BVPsolution}
The Laplace transform $F_1$ is given by
\begin{equation}
\label{eq:explicitexpression}
F_1(s_1)=\dfrac{F_0}{\widetilde{F}_0}\dfrac{1}{s_1}+\left(\dfrac{F_0}{\widetilde{F}_0} \dfrac{r_2}{q_2}-F_1(q_2/r_2)\right)\left(X(s_1)-1\right),\quad \forall\, s_1\in\mcD_1,
\end{equation}
where
\begin{equation}
\label{eq:X}
X(s_1)\coloneqq\left(\dfrac{w(s_1)-w(q_2/r_2)}{w(s_1)-1}\right)^\chi \exp\left(\dfrac{1}{2i\pi}\int_{\mcC_1^-} \log(G(t))\dfrac{w'(t)}{w(t)-w(s_1)}\D t\right)
\end{equation}
and
\[
F_1(q_2/r_2)=\dfrac{F_0}{\widetilde{F}_0}\dfrac{r_2}{q_2}+\dfrac{F_0}{X(x_0)} \left(\dfrac{1}{\widetilde{F}_0}\left(\dfrac{1}{x_0}-\dfrac{r_2}{q_2}\right)+ \dfrac{1}{\psi_1(x_0,0)}\right).
\]
\end{thm}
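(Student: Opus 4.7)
The plan is to solve the Carleman BVP for $f_1$ established in Proposition~\ref{prop:BVP}, recover $F_1$ through
\[
F_1(s_1)=F_1(q_2/r_2)+\frac{F_0}{\widetilde{F}_0}\left(\frac{1}{s_1}-\frac{r_2}{q_2}\right)+f_1(s_1),
\]
and finally fix the free constant $F_1(q_2/r_2)$ using the identities already known at the special point~$x_0$. By Lemmas~\ref{lem:poleD} and~\ref{lem:chipole}, $f_1$ has no poles in $\mcD_1\cup\{\infty\}$ and exactly $\chi$ zeros there (a single zero at $q_2/r_2$ when $\chi=1$). The first step is to reduce to an equivalent BVP of index zero so that taking logarithms becomes legitimate.

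To that end I would set
\[
\widetilde{f}_1(s_1)\coloneqq f_1(s_1)\left(\frac{w(s_1)-1}{w(s_1)-w(q_2/r_2)}\right)^{\!\chi}.
\]
Since $w$ maps $\mcD_1$ into $\mbC\setminus[-1,1]$, the rational prefactor is holomorphic and non-vanishing on $\mcD_1$, it tends to $1$ at infinity, and, because $w(s_1)=w(\overline{s_1})$ on $\mcC_1$, it is invariant under complex conjugation on the boundary. Consequently $\widetilde{f}_1$ is analytic and nowhere zero on $\mcD_1\cup\{\infty\}$, it inherits the boundary condition $\widetilde{f}_1(\overline{s_1})=G(s_1)\widetilde{f}_1(s_1)$, and satisfies $\widetilde{f}_1(\infty)=f_1(\infty)=C$, where $C\coloneqq\frac{F_0}{\widetilde{F}_0}\frac{r_2}{q_2}-F_1(q_2/r_2)$ is the limit supplied by item~\ref{item:2} of Proposition~\ref{prop:BVP}. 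A single-valued branch of $\log\widetilde{f}_1$ thus exists on $\mcD_1$, and the boundary condition becomes the additive Riemann--Hilbert jump $\log\widetilde{f}_1(\overline{s_1})-\log\widetilde{f}_1(s_1)=\log G(s_1)$.

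Transferring this problem to $\mbC\setminus[-1,1]$ via the conformal gluing $w$ of Lemma~\ref{lem:gluingw} and applying the Plemelj--Sokhotski formulas in the form used in~\cite{fayolle_random_2017} yields
\[
\log\widetilde{f}_1(s_1)=\log C+\frac{1}{2i\pi}\int_{\mcC_1^-}\log G(t)\,\frac{w'(t)}{w(t)-w(s_1)}\,\D t,
\]
the additive constant being pinned down by $\widetilde{f}_1(\infty)=C$ together with the fact that the Cauchy-type integral vanishes at infinity. Exponentiating and restoring the rational prefactor gives $f_1(s_1)=C\,X(s_1)$ with $X$ as in~\eqref{eq:X}, and substitution into the relation displayed above yields precisely the claimed expression~\eqref{eq:explicitexpression}.

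It remains to identify $F_1(q_2/r_2)$. I would evaluate~\eqref{eq:explicitexpression} at $s_1=x_0$, which lies in $\mcD_1$ by Lemma~\ref{lem:branchpoint} combined with $\mu_1<0$, and compare the result with the first identity in~\eqref{eq:valueC}, namely $\psi_1(x_0,0)\bigl[F_1(x_0)-F_1(q_2/r_2)\bigr]+F_0=0$. This produces a single linear equation for $F_1(q_2/r_2)$ whose unique solution rearranges to the stated formula. The main obstacle throughout is the careful bookkeeping around the index $\chi$: one has to verify that the rational prefactor truly absorbs the zero of $f_1$ at $q_2/r_2$ without introducing new singularities on $\mcD_1\cup\{\infty\}$, that continuous single-valued branches of $\log G$ along $\mcC_1^-$ and of $\log\widetilde{f}_1$ on $\mcD_1$ are available, and that the normalization at infinity is correctly tracked through the exponentiation.
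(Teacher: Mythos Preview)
Your proposal is correct and follows essentially the same route as the paper: reduce the Carleman BVP for $f_1$ to a standard Riemann problem via the gluing map $w$, absorb the index with a rational factor built from $w(s_1)-1$ and $w(s_1)-w(q_2/r_2)$, solve by a Cauchy-type integral and the Sokhotski--Plemelj formulas, and then pin down $F_1(q_2/r_2)$ through~\eqref{eq:valueC} at $s_1=x_0$. The only cosmetic difference is that the paper first transports the problem to the $x$-plane $\mbC\setminus[-1,1]$ and argues via Liouville's theorem on the ratio $\widetilde{f}_1/\widetilde{X}$, whereas you stay in the $s_1$-plane and take a logarithm of $\widetilde{f}_1$ directly; the Liouville route has the minor advantage of not requiring $C\neq 0$ a priori.
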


Let us provide some comments. Firstly, the given expression is valid for real $s_1$ larger than $\sqrt{\lambda_1q_1/c_1}-q_1>0$. Secondly, we may replace the integral on the half circle of $\log G$ by the integral on the whole circle of $\log\frac{\psi_1}{\psi_2}$, since
\[
\int_{\mcC_1^-} \log(G(t))\dfrac{w'(t)}{w(t)-w(s_1)}\D t=\int_{\mcC_1} \log\left(\dfrac{\psi_1}{\psi_2}(t,S_2^+(t))\right)\dfrac{w'(t)}{w(t)-w(s_1)}\D t.
\]

This theorem establishes the existence of the unique solution of the kernel equation under the limit conditions found in Theorem~\ref{thm:domination}. The uniqueness derives from the solution of the boundary value problem and the value of the index. The same remark can be made about Theorem~\ref{thm:explicitF1Brown}.

\begin{proof}[Proof of Theorem~\ref{thm:BVPsolution}]
To solve the Carleman BVP of Proposition~\ref{prop:BVP} we are going to transform it into a Riemann BVP using the conformal gluing function $w$. See, for example,~\cite[\S5.2]{fayolle_random_2017} which present briefly the main results of BVP theory. We consider the function
\[
\widetilde{f}_1(x)\coloneqq (x-w(q_2/r_2))^{-\chi} f_1\circ w^{-1}(x).
\]
According to Proposition~\ref{prop:BVP}, Lemma~\ref{lem:gluingw}, and the fact that $f_1(q_2/r_2)=0$ we have
\begin{enumerate}[label=(\roman*)]
\item
\label{item:11}
$\widetilde{f}_1$ is analytic on $\mbC\setminus [-1,1]$;

\item
\label{item:22}
$\widetilde{f}_1(x)\underset{\infty}{\sim} x^{-\chi}\left(\frac{F_0}{\widetilde{F}_0} \frac{r_2}{q_2}-F_1(q_2/r_2)\right)$;

\item
\label{item:33}
$\widetilde{f}_1$ has left limits $\widetilde{f}_1^+$ and right limits $\widetilde{f}_1^-$ on $[-1,1]$ which satisfy the boundary condition
\[
\widetilde{f}_1^+(x)=\widetilde{G}(x)\widetilde{f}_1^-(x)
\]
with $\widetilde{G}(x)\coloneqq G((w^{-1})^-(x))$ where we denote by $(w^{-1})^-$ the right limit on $[-1,1]$ of $w^{-1}$, see Figure~\ref{fig:w}.
\end{enumerate}
The function
\[
\widetilde{X}(x)\coloneqq\left({x-1}\right)^{-\chi}\exp\left(\dfrac{1}{2i\pi}\int_{-1}^1 \dfrac{\log\widetilde{G}(u)}{u-x}\D u\right),\quad \forall\, x\notin\mbC\setminus [0,1],
\]
satisfies the homogeneous problem
\[
\widetilde{X}^+(x)=\widetilde{G}(x)\widetilde{X}^-(x),\quad \forall\, x\in [0,1],
\]
where we write $\widetilde{X}^+$ (resp. $\widetilde{X}^-$) for the right (resp. left) limit of $\widetilde{X}$ on $[-1,1]$. This is a classical result of BVP theory stemming from the Sokhotsky--Plemelj formulas, see~\cite[(5.2.24) and Theorem~5.2.3]{fayolle_random_2017}. We deduce from~\ref{item:33} that
\[
\dfrac{\widetilde{f}_1^+}{\widetilde{X}^+}(x)= \dfrac{\widetilde{f}_1^-}{\widetilde{X}^-}(x),\quad \forall\, x\in [0,1].
\]
From~\ref{item:11} it follows that $\frac{\widetilde{f}_1}{\widetilde{X}} $ is analytic in the whole $\mbC$. Thanks to item~\ref{item:22} and to the fact that $\widetilde{X}(x)\sim_\infty x^{-\chi}$ (by Lemma~\ref{lem:chipole} and since the integral in the exponential goes to $0$ when $x$ goes to infinity) we  find that the analytic function $\frac{\widetilde{f}_1}{\widetilde{X}}$ converges to $\frac{F_0}{\widetilde{F}_0} \frac{r_2}{q_2}-F_1(q_2/r_2)$ at infinity. Thus it coincides with this constant, and so
\begin{gather*}
f_1(s_1)=\left(\dfrac{F_0}{\widetilde{F}_0}\dfrac{r_2}{q_2}-F_1(q_2/r_2)\right) (w(s_1)-w(q_2/r_2))^\chi\widetilde{X}(w(s_1))=\\
=\left(\dfrac{F_0}{\widetilde{F}_0}\dfrac{r_2}{q_2}-F_1(q_2/r_2)\right)X(s_1),
\end{gather*}
where the last equality follows by change of variable $u=w(t)$. Now \eqref{eq:explicitexpression} follows from the definition of $f_1$ in~\eqref{eq:F0tilde}.

We now compute the constant $F_1(q_2/r_2)$. Equation~\eqref{eq:valueC} gives
\[
F_1(x_0)-F_1(q_2/r_2)=-\dfrac{F_0}{\psi_1(x_0,0)},
\]
whereas \eqref{eq:explicitexpression} implies that
\[
F_1(x_0)-F_1(q_2/r_2)=\dfrac{F_0}{\widetilde{F}_0} \left(\dfrac{1}{x_0}-\dfrac{r_2}{q_2}\right)+ \left(\dfrac{F_0}{\widetilde{F}_0}\dfrac{r_2}{q_2}-F_1(q_2/r_2)\right)X(x_0),
\]
which readily yield the stated expression for $F_1(q_2/r_2)$.
\end{proof}

We conclude by providing an expression for the probability of total domination when starting from the origin.

\begin{cor}
\label{cor:totaldomination}
The probability of total domination when stating from the origin is given by
\begin{equation}
\label{eq:p1Poisson}
p_1(0,0)=\dfrac{F_0}{\widetilde{F}_0}-\left(\dfrac{F_0}{\widetilde{F}_0}\dfrac{r_2}{q_2}- F_1(q_2/r_2)\right) \dfrac{\sqrt{\lambda_1q_1/c_1}}{i\pi}\int_{\mcC_1^-} \log(G(t))w'(t)\D t.
\end{equation}
\end{cor}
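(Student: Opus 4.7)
The strategy is to let $s_1\to\infty$ in the explicit expression for $F_1$ given by Theorem~\ref{thm:BVPsolution} and invoke the initial-value (Abelian) theorem for Laplace transforms. The present compound Poisson model is not of the degenerate form~\eqref{eq:degenerate} (there are no common shocks, so that condition is vacuous), hence $p_1(\cdot,0)$ is continuous on $[0,\infty)$ by Theorem~\ref{thm:dom_prob_conv} and bounded by $1$; the Abelian theorem then yields $p_1(0,0)=\lim_{s_1\to\infty} s_1 F_1(s_1)$. Multiplying~\eqref{eq:explicitexpression} by $s_1$ gives
\[
s_1 F_1(s_1)=\dfrac{F_0}{\widetilde F_0}+\left(\dfrac{F_0}{\widetilde F_0}\dfrac{r_2}{q_2}-F_1(q_2/r_2)\right)s_1(X(s_1)-1),
\]
so the claim reduces to evaluating $\lim_{s_1\to\infty} s_1(X(s_1)-1)$ from the contour representation~\eqref{eq:X}.

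For this limit, I would first observe that from~\eqref{eq:wdef} one has $w(s_1)=(s_1+q_1)/(2\sqrt{\lambda_1 q_1/c_1})+\Oh(1/s_1)$ as $s_1\to\infty$, whence $s_1/w(s_1)\to 2\sqrt{\lambda_1 q_1/c_1}$. In~\eqref{eq:X} the rational prefactor tends to $1$. Moreover, $\mcC_1^-$ is compact and $w$, $w'$ and $\log G$ are bounded on it, so dominated convergence yields
\[
\dfrac{1}{2i\pi}\int_{\mcC_1^-}\log G(t)\dfrac{w'(t)}{w(t)-w(s_1)}\D t=-\dfrac{1}{2i\pi\,w(s_1)}\int_{\mcC_1^-}\log G(t)\,w'(t)\D t+\oh(1/w(s_1)).
\]
Expanding $\exp(\epsilon)=1+\epsilon+\Oh(\epsilon^2)$ and multiplying by $s_1$ then produces
\[
\lim_{s_1\to\infty} s_1(X(s_1)-1)=-\dfrac{\sqrt{\lambda_1 q_1/c_1}}{i\pi}\int_{\mcC_1^-}\log G(t)\,w'(t)\D t,
\]
which, substituted into the displayed formula for $s_1 F_1(s_1)$, produces~\eqref{eq:p1Poisson}.

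The main technical point is the treatment of the case $\chi=1$: there the rational prefactor in~\eqref{eq:X} contributes an additional term $(1-w(q_2/r_2))/w(s_1)+\Oh(1/w(s_1)^2)$ to the expansion of $X(s_1)-1$. For the stated formula to hold uniformly in $\chi$, the branch of $\log G$ along $\mcC_1^-$ implicit in Theorem~\ref{thm:BVPsolution} must be chosen so as to absorb this correction, which reflects the nontrivial winding of $G$ around zero when $\chi=1$. Once this bookkeeping is settled, the proof reduces to the Laurent expansion of the contour integral carried out above.
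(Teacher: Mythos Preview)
Your approach is essentially identical to the paper's: multiply~\eqref{eq:explicitexpression} by $s_1$, use $p_1(0,0)=\lim_{s_1\to\infty}s_1F_1(s_1)$, and expand $X(s_1)$ to first order in $1/w(s_1)$. The paper's proof is terser and does not separately address the $\chi=1$ issue you flag; your observation that the rational prefactor then contributes an extra $(1-w(q_2/r_2))/w(s_1)$ term is correct, and the paper's stated formula tacitly rests on the same unstated convention for the branch of $\log G$ that you identify.
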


\begin{proof}
We deduce from Theorem~\ref{thm:BVPsolution} that
\[
p_1(0,0)=\lim_{s_1\to\infty} s_1F_1(s_1)=\dfrac{F_0}{\widetilde{F}_0}+ \left(\dfrac{F_0}{\widetilde{F}_0}\dfrac{r_2}{q_2}-F_1(q_2/r_2)\right)\lim_{s_1\to\infty} s_1(X(s_1)-1).
\]
Let us notice that when $s_1\to\infty$ the integral in the exponential of equation~\eqref{eq:X} is equivalent to $C/s_1$ where
\[
C\coloneqq -\dfrac{\sqrt{\lambda_1q_1/c_1}}{i\pi}\int_{\mcC_1^-} \log(G(t))w'(t)\D t.
\]
By Taylor's expansion of $X$ we obtain $X(s_1)=1+C/s_1+\oh(1/s_1)$ and the result follows.
\end{proof}

\section{Explicit solution for the Brownian model}
\label{sec:BM}
In this section we solve the kernel equation~\eqref{Brownian_kernel_equation} for the correlated Brownian model. We obtain an explicit integral expression for $F_1$ and the probability $p_1(0,0)$ in Theorem~\ref{thm:explicitF1Brown}. The asymptotics of $p_1(u,0),u\to\infty$ is given in Proposition~\ref{prop:asymptBrown}. We follow the same steps as in the Poissonian model studied in~\S\ref{sec:poi} and, consequently, some details will be omitted. Importantly, the kernel $\psi$ is similar to the one studied in~\cite{franceschi_2019} and~\cite{baccelli_analysis_1987}, and so its various properties can be taken from there.

Without stating it explicitly we assume in the following that our parameters satisfy the conditions of Proposition~\ref{prop:Brownian_kernel}. In particular, correlation is non-negative $\rho\in [0,1)$. We stress, however, that the main parts of the following analysis can be carried out also for $\rho<0$, and so the remaining hurdle is showing that the same kernel equation holds in this case as well.

\subsection{Study of the kernel}
Reconsider the kernel in~\eqref{Brownian_kernel_equation}, and  define the bi-valued functions $S_1$ and $S_2$ such that
\[
\psi(S_1(s_2),s_2)=0\quad \text{and}\quad \psi(s_1,S_2(s_1))=0.
\]
A direct calculus yields the branches
\[
\begin{cases}
S_1^\pm(s_2)=\dfrac{-(\rho\sigma_1\sigma_2s_2+\mu_1)\pm \sqrt{s_2^2\sigma_1^2\sigma_2^2(\rho^2-1)+ 2s_2\sigma_1(\mu_1\rho\sigma_2-\mu_2\sigma_1)+\mu_1^2}}{\sigma_1^2},\\
S_2^\pm(s_1)=\dfrac{-(\rho\sigma_1\sigma_2s_1+\mu_2)\pm \sqrt{s_1^2\sigma_1^2\sigma_2^2(\rho^2-1)+ 2s_1\sigma_2(\mu_2\rho\sigma_1-\mu_1\sigma_2)+\mu_2^2}}{\sigma_2^2}.
\end{cases}
\]
The respective branch points of $S_1$ and $S_2$ are
\[
\begin{cases}
y^\pm=\dfrac{\mu_1\rho\sigma_1\sigma_2-\mu_2\sigma_1^2\pm \sqrt{(\mu_1\rho\sigma_1\sigma_2-\mu_2\sigma_1^2)^2+\mu_1^2\sigma_1^2\sigma_2^2(1-\rho^2)}} {\sigma_1^2\sigma_2^2(1-\rho^2)},\\
x^\pm=\dfrac{\mu_2\rho\sigma_1\sigma_2-\mu_1\sigma_2^2\pm \sqrt{(\mu_2\rho\sigma_1\sigma_2-\mu_1\sigma_2^2)^2+\mu_2^2\sigma_1^2\sigma_2^2(1-\rho^2)}} {\sigma_1^2\sigma_2^2(1-\rho^2)} .
\end{cases}
\]
The functions $S_1^\pm$ (resp. $S_2^\pm$) are analytic on the cut plane $\mbC\setminus ((-\infty,y^-]\cup [y^+,\infty))$ (resp. $\mbC\setminus ((-\infty,x^-]\cup [x^+,\infty))$). See Figure~\ref{fig:kernelBrownian} to visualize $S_2^\pm$ on $[x^-,x^+]$.

Recall the definition of $x_0,y_0$ in~\eqref{eq:x0y0Brownian}. Furthermore, we define $s_1^p$, the first coordinate of the other intersection between the ellipse $\psi=0$ and the line $\psi_1=0$. Symmetrically we define $s_2^p$. We have
\begin{equation}
\label{eq:s1pBrown}
s_1^p\coloneqq -\dfrac{2(r_2\abs*{\mu_2}-\abs*{\mu_1})} {\sigma_1^2+\sigma_2^2r_2^2-2\rho\sigma_1\sigma_2r_2}<0
\quad \text{and}\quad
s_2^p\coloneqq -\dfrac{2(r_1\abs*{\mu_1}-\abs*{\mu_2})} {\sigma_2^2+\sigma_1^2r_1^2-2\rho\sigma_1\sigma_2r_1}<0.
\end{equation}
See Figure~\ref{fig:kernelBrownian} for a geometric interpretation of $x_0$, $y_0$ and $s_1^p$.

\begin{figure}[ht!]
\begin{center}
\includegraphics[scale=1]{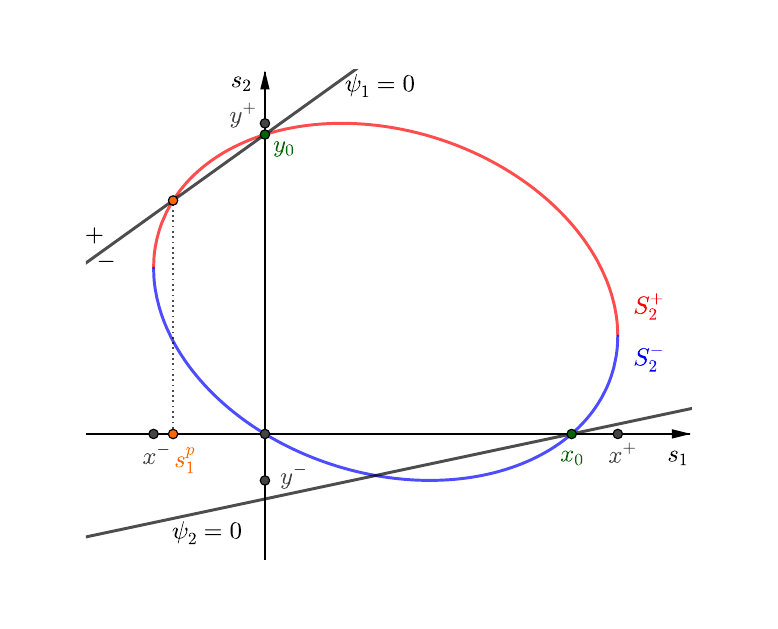}
\vspace{-0.8cm}
\caption{The set $\{(s_1,s_2)\in\mbR^2\colon \psi(s_1,s_2)=0\}$ is an ellipse divided in two parts: in blue the function $S_2^-$ and in red the function $S_2^+$. The two lines are the sets defined by $\psi_1=0$ and $\psi_2=0$. The branch points $x^\pm$ and $y^\pm$ are in black, the points $x_0$ and $y_0$ in green and the pole $s_1^p$ in orange.}
\label{fig:kernelBrownian}
\end{center}
\end{figure}

\begin{figure}[ht!]
\begin{center}
\begin{subfigure}{0.32\textwidth}
\includegraphics[scale=1]{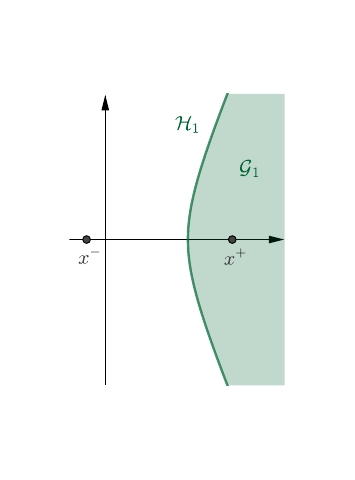}
\caption{$\rho<0$}
\label{fig:gull2}
\end{subfigure}
\begin{subfigure}{0.32\textwidth}
\includegraphics[scale=1]{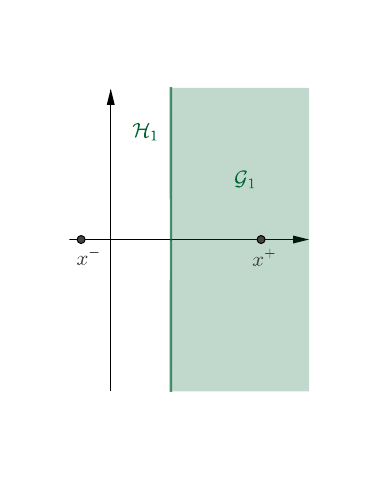}
\caption{$\rho=0$}
\label{fig:tiger2}
\end{subfigure}
\begin{subfigure}{0.32\textwidth}
\includegraphics[scale=1]{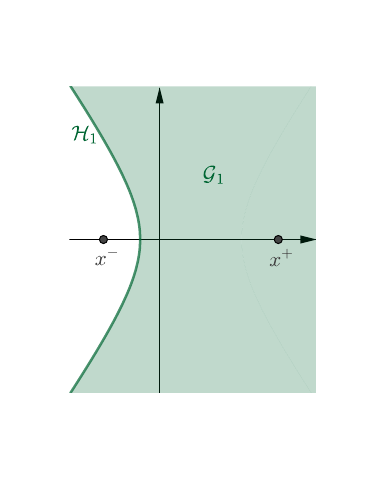}
\caption{$\rho>0$}
\label{fig:mouse2}
\end{subfigure}
\caption{Complex plane of the $s_1$ variable: in green the hyperbola $\mcH_1$ and the domain $\mcG_1$.}
\label{fig:courbesBrownian}
\end{center}
\end{figure}

We now define the curve
\[
\mcH_1\coloneqq S_1^\pm([y^+,\infty))=\{s_1\in\mbC\colon \psi(s_1,s_2)=0 \text{ and } s_2\in [y^+,\infty)\}.
\]
This curve is the boundary of the boundary value problem established in Section~\ref{sec:BVPBrownian}.

\begin{lem}[Hyperbola $\mcH_1$]
\label{lem:hyperbole}
The curve $\mcH_1$ is a branch of hyperbola symmetrical w.r.t. the horizontal axis, whose equation is
\begin{equation*}
\label{eq:hyperbole}
\sigma_1^2\sigma_2^2(\rho^2-1)x^2+\rho^2\sigma_1^2\sigma_2^2y^2- 2(\sigma_2^2\mu_1-\rho\sigma_1\sigma_2\mu_2)x= \mu_1(\sigma_2^2\mu_1-2\rho\sigma_1\sigma_2\mu_2)/\sigma_1^2.
\end{equation*}
The curve $\mcH_1$ is the right branch of the hyperbola if $\rho<0$, the left branch if $\rho>0$, and a straight line when $\rho=0$, see Figure~\ref{fig:courbesBrownian}.
\end{lem}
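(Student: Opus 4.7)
The plan is to parametrize the curve by $s_2$ ranging over $[y^+,\infty)$, turn the equation $\psi(s_1,s_2)=0$ into two real equations by splitting into real and imaginary parts of $s_1=x+iy$, and then algebraically eliminate $s_2$ to get the stated Cartesian equation. Because $\rho^2-1<0$, the discriminant of the quadratic $\psi(\cdot,s_2)$ in the variable $s_1$,
\[
\Delta_1(s_2)=(\rho^2-1)\sigma_1^2\sigma_2^2s_2^2+2(\rho\sigma_1\sigma_2\mu_1-\sigma_1^2\mu_2)s_2+\mu_1^2,
\]
opens downward in $s_2$, so on $[y^+,\infty)$ we have $\Delta_1(s_2)\les 0$ and $S_1^\pm(s_2)=\overline{S_1^\mp(s_2)}$. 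This yields at once that $\mcH_1$ is symmetric w.r.t.\ the real axis and consists of genuinely complex points off the branch point $s_2=y^+$.

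For the Cartesian equation, write $s_1=x+iy$ with $y\neq 0$ and expand $\psi(x+iy,s_2)=0$. The imaginary part reads
\[
y\bigl(\sigma_1^2 x+\rho\sigma_1\sigma_2s_2+\mu_1\bigr)=0,
\]
so, assuming $\rho\neq 0$, one can solve $s_2=-(\sigma_1^2 x+\mu_1)/(\rho\sigma_1\sigma_2)$. Substituting into the real part
\[
\tfrac12\sigma_1^2(x^2-y^2)+\rho\sigma_1\sigma_2 xs_2+\tfrac12\sigma_2^2 s_2^2+\mu_1 x+\mu_2 s_2=0
\]
and simplifying (after multiplying through by $-\sigma_2^2/\sigma_1^2$ once the dust settles) produces exactly the displayed hyperbola equation. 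Since the coefficient of $x^2$ is $\sigma_1^2\sigma_2^2(\rho^2-1)<0$ and that of $y^2$ is $\rho^2\sigma_1^2\sigma_2^2>0$, with no $xy$ cross-term and $y$-symmetry, the curve is indeed a hyperbola with horizontal transverse axis. When $\rho=0$, the imaginary part forces directly $x=-\mu_1/\sigma_1^2$, so $\mcH_1$ is a vertical line, which is precisely the degeneration of the hyperbola equation in this case.

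It remains to identify the correct branch. For this I would look at the asymptotic behaviour as $s_2\to\infty$: from the explicit formula for $S_1^\pm$,
\[
\Re S_1^\pm(s_2)=-\dfrac{\rho\sigma_1\sigma_2s_2+\mu_1}{\sigma_1^2}\sim -\dfrac{\rho\sigma_2}{\sigma_1}s_2.
\]
Hence for $\rho>0$ we have $\Re S_1^\pm(s_2)\to -\infty$, so $\mcH_1$ is the left branch; for $\rho<0$ we get $\Re S_1^\pm(s_2)\to +\infty$, so it is the right branch; for $\rho=0$ it is the vertical line already identified. Continuity plus the vertex at $S_1^\pm(y^+)\in\mbR$ rules out the other branch.

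The only mildly delicate step is the algebraic elimination in going from the real part to the conic equation; this is purely computational but easy to mis-sign, so I would keep track of $\mu_i<0$ versus $|\mu_i|$ carefully. The case distinction by sign of $\rho$ is transparent from the asymptote computation, so no real obstacle arises there.
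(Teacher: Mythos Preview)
Your argument is correct and self-contained. The paper itself does not give a proof here: it simply refers the reader to \cite[Lemma~4]{franceschi_2019} and \cite[Lemma~9]{baccelli_analysis_1987}, where the same kernel is analysed. Your approach --- splitting $\psi(x+iy,s_2)=0$ into real and imaginary parts, eliminating $s_2$ via the imaginary part, and substituting into the real part --- is exactly the computation underlying those references, and you carry it through cleanly (the factor $-\sigma_2^2/\sigma_1^2$ is indeed the right normalisation to recover the displayed equation). Your branch identification via the asymptotics of $\Re S_1^\pm(s_2)$ as $s_2\to\infty$ is also correct and arguably more transparent than tracking the vertex location. In short, you have supplied the argument the paper outsourced; nothing is missing.
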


\begin{proof}
See~\cite[Lemma~4]{franceschi_2019} or~\cite[Lemma~9]{baccelli_analysis_1987} which study a similar kernel and derive the equation of the hyperbola.
\end{proof}

We denote by $\mcH_1^-$ the part of $\mcH_1$ of imaginary part negative. Finally we define the domain $\mcG_1$ which is bounded by $\mcH_1$ and contain $x^+$ (and not $x^-$), see Figure~\ref{fig:courbesBrownian}.

\subsection{Asymptotics results}
Similarly to Section~\ref{sec:continuation} we continue meromorphically $f_1$ and we study its poles in order to compute the asymptotics of $p_1(u,0)$ and $p_1(0,v)$ when $u$ and $v\to\infty$.

\begin{lem}[Analytic continuation]
\label{lem:continuationBrown}
The function $F_1(s_1)$ can be meromorphically extended to the set
\begin{equation}
\label{eq:setBrownian}
\{s_1\in\mbC\colon\Re s_1>0 \text{ or } \Re S_2^+(s_1)>0\}
\end{equation}
thanks to the formula
\begin{equation}
\label{eq:continuationBrown}
F_1(s_1)=\dfrac{-\psi_2(s_1,S_2^+(s_1))F_2(S_2^+(s_1))-cp_1(0,0)}{\psi_1(s_1,S_2^+(s_1))}.
\end{equation}
The domain $\mcG_1$ is included in the set defined in~\eqref{eq:setBrownian} and $F_1$ is then meromorphic on $\mcG_1$.
\end{lem}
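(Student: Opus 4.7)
The plan is to adapt the argument of Lemma~\ref{lem:continuation} together with Lemma~\ref{lem:domain} to the Brownian setting, replacing the disc domain $\mcD_1$ with the hyperbola domain $\mcG_1$. The proof has two parts: (i) justify the meromorphic extension formula~\eqref{eq:continuationBrown} via analytic continuation from a neighborhood of the positive real axis; (ii) verify the inclusion $\mcG_1 \subset \{s_1\in\mbC\colon \Re s_1>0\}\cup D$, where $D\coloneqq \{s_1\in\mbC\colon \Re S_2^+(s_1)>0\}$.

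For (i), I would start from the Brownian kernel equation~\eqref{Brownian_kernel_equation}, valid for $\Re s_1,\Re s_2>0$. Substituting $s_2=S_2^+(s_1)$ on the zero set $\psi(s_1,s_2)=0$, the left-hand side vanishes and solving for $F_1(s_1)$ yields~\eqref{eq:continuationBrown}. The key enabling fact is that $S_2^+(0)=-2\mu_2/\sigma_2^2=y_0>0$ (cf.~\eqref{eq:x0y0Brownian}), so by continuity $\Re S_2^+>0$ in a neighborhood of the origin in the right half-plane; in particular, the open sets $D$ and $\{\Re s_1>0\}$ intersect on a connected open set. The right-hand side of~\eqref{eq:continuationBrown} is meromorphic on $D$ as a composition of meromorphic functions, and it agrees with the Laplace transform $F_1$ on the overlap, so the principle of analytic continuation yields a meromorphic extension to $\{\Re s_1>0\}\cup D$.

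For (ii), note first that the point $x^+$ belongs to $\mcG_1$ and is positive (readily seen from its explicit form), so $\mcG_1\cap\{\Re s_1>0\}\neq\emptyset$. It then suffices to show $\mcG_1\cap\{\Re s_1\les 0\}\subset D$. On the boundary $\mcH_1$, by construction $S_2^+$ takes values in $[y^+,\infty)$, so $\Re S_2^+>0$ there. For the behavior at infinity, from the explicit expression for $S_2^+$ one obtains
\[
S_2^+(s_1)\sim\dfrac{\sigma_1(-\rho+i\sqrt{1-\rho^2})}{\sigma_2}\,s_1,\qquad \abs{s_1}\to\infty,
\]
with the sign of the radical determined by the branch cut convention. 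A direct check shows $\Re S_2^+(s_1)\to +\infty$ along any ray in $\mcG_1\cap\{\Re s_1\les 0\}$. Since $\Re S_2^+$ is harmonic on the domain of analyticity of $S_2^+$, the minimum principle applied on $\mcG_1\cap\{\Re s_1\les 0\}$ (with boundary comprising the relevant portion of $\mcH_1$, the imaginary axis segment inside $\overline{\mcG_1}$, and the point at infinity) yields $\Re S_2^+>0$ throughout, giving the desired inclusion. Meromorphicity of $F_1$ on $\mcG_1$ then follows from (i).

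I expect the main technical obstacle to be the geometric case analysis depending on the sign of $\rho$. When $\rho=0$ the hyperbola $\mcH_1$ degenerates into a vertical line and the argument is transparent. When $\rho>0$ the cuts $(-\infty,x^-]$ and $[x^+,\infty)$ of $S_2^+$ must be verified to lie outside $\mcG_1$, and the choice of branch must be consistent with the claimed asymptotic direction; care is also required to handle the possible part of the boundary lying on the imaginary axis when applying the minimum principle. The kernel here is structurally identical to the one treated in~\cite{franceschi_2019,baccelli_analysis_1987}, from which the necessary geometric facts about $\mcH_1$, $\mcG_1$, and the branches of $S_2^\pm$ can be imported.
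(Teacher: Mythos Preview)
Your proposal is correct and follows essentially the same approach as the paper: part~(i) reproduces the argument of Lemma~\ref{lem:continuation} (kernel equation on the zero set, $S_2^+(0)=y_0>0$, principle of analytic continuation), and part~(ii) reproduces that of Lemma~\ref{lem:domain} (boundary values of $\Re S_2^+$ on $\mcH_1$, asymptotics at infinity, maximum principle). The paper's own proof simply refers back to these two lemmas and cites~\cite[Lemma~5]{franceschi_2019} for the geometric inclusion, which is precisely the external input you identify for the $\rho$-dependent case analysis.
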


\begin{proof}
The proof follows the same steps as the proof of Lemma~\ref{lem:continuation} and Lemma~\ref{lem:domain}. See also \cite[Lemma~5]{franceschi_2019} to show the inclusion of $\mcG_1$ in the set defined in~\eqref{eq:setBrownian}.
\end{proof}

\begin{lem}[Poles of $F_1$]
\label{lem:poleBrownian}
$F_1$ has one or two poles in the set defined in~\eqref{eq:setBrownian}:
\begin{itemize}
\item
$0$ is always a simple pole of $F_1$,

\item
$s_1^p$ is a simple pole of $F_1$ if and only if $\psi_1(x^-,S_2^\pm(x^-))<0$, where $s_1^p$ is defined in~\eqref{eq:s1pBrown}.
\end{itemize}
$F_2$ has a unique simple pole which is $s_2^p$ if $\psi_2(S_1^\pm(y^-),y^-)<0$ and has no poles otherwise.
\end{lem}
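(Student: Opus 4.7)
The plan is to mirror the proof of Lemma~\ref{lem:poleD}, adapted to the simpler geometry in which $\psi=0$ is an ellipse and $\psi_1=0$ an affine line.

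First, as a Laplace transform of a bounded function, $F_1$ is analytic and pole-free on $\{\Re s_1>0\}$. By Theorem~\ref{thm:domination}, $p_1(u,v)\to 1$ as $u\to\infty$ and $p_1(u,v)\to 0$ as $v\to\infty$, which via~\eqref{eq:Laplace_transforms} gives $\hat F_1(0+)=s_1F_1(s_1)|_{s_1\to 0+}=1$ and $\hat F_2(0+)=0$. Hence $0$ is a simple pole of $F_1$ with residue $1$ but not a pole of $F_2$. Inside the extended domain~\eqref{eq:setBrownian}, $F_1$ is given by~\eqref{eq:continuationBrown}, so any further pole must be a zero of the denominator $\psi_1(s_1,S_2^+(s_1))$; possible contributions from poles of $F_2\circ S_2^+$ are handled by the parallel claim for $F_2$ (established in the same stroke).

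Next, the extra candidate poles solve the system
\[
\psi(s_1,s_2)=0,\qquad \psi_1(s_1,s_2)=0,\qquad s_2=S_2^+(s_1).
\]
The ellipse meets the line $\psi_1=0$ in exactly two points. One of them is $(0,y_0)$, which reproduces the already-identified pole at the origin. Substituting the linear expression for $s_2$ from $\psi_1=0$ into $\psi=0$ yields a quadratic in $s_1$ admitting $s_1=0$ as an obvious root; the residual linear equation gives the second $s_1$-coordinate, which computes to~\eqref{eq:s1pBrown}.

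Finally, $s_1^p$ is an actual pole exactly when the corresponding second coordinate equals $S_2^+(s_1^p)$, i.e., when the second intersection lies on the upper arc of the ellipse (the arc parametrised by $S_2^+$ that contains $(0,y_0)$). The real function $h(s_1)=\psi_1(s_1,S_2^+(s_1))$ on $[x^-,x^+]$ vanishes at $s_1=0$ and takes values $\psi_1(x^\pm,S_2^\pm(x^\pm))$ at the endpoints (where $S_2^+=S_2^-$). A sign analysis shows that $h$ has a second zero in $[x^-,x^+]$ precisely when $\psi_1(x^-,S_2^\pm(x^-))<0$, which is the stated criterion; when it is met, $S_2^+(s_1^p)$ is real and positive, so $s_1^p$ indeed lies in~\eqref{eq:setBrownian}. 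The claim for $F_2$ follows by a completely symmetric argument, noting that $0$ is already excluded as a pole of $F_2$ by Step~1. The main delicacy I anticipate is making this last sign argument rigorous, that is, verifying that the endpoint sign of $h$ controls which branch of $S_2$ catches the second intersection; this mirrors the corresponding (implicit) step in Lemma~\ref{lem:poleD} and can be handled by tracking the sign of the affine polynomial $\psi_1$ along the ellipse from $(0,y_0)$ to each branch point.
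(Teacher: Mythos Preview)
Your proposal is correct and follows essentially the same approach as the paper, which simply states that the proof mirrors Lemma~\ref{lem:poleD} and that poles arise from zeros of $\psi_1(s_1,S_2^+(s_1))$, i.e., from the two intersections of the line $\psi_1=0$ with the ellipse $\psi=0$. One minor point: your remark that ``possible contributions from poles of $F_2\circ S_2^+$ are handled by the parallel claim for $F_2$'' is unnecessary, since on the extension domain~\eqref{eq:setBrownian} one has $\Re S_2^+(s_1)>0$, so $F_2(S_2^+(s_1))$ is already analytic there and contributes no poles---this is the same (implicit) reasoning as in Lemma~\ref{lem:poleD}.
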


\begin{proof}
The proof follows the same steps (but simpler) as the proof of Lemma~\ref{lem:poleD}. The poles come from the zeros of the denominator of the continuation formula~\eqref{eq:continuationBrown}, that is the zeros of $\psi_1(s_1,S_2^+(s_1))$. It is the intersection between a line and an ellipse, see Figure~\ref{fig:kernelBrownian}.
\end{proof}

\begin{prop}[Asymptotics of domination]
\label{prop:asymptBrown}
The asymptotic behaviour of $1-p_1(u,0)$ as $u\to\infty$ is given by
\[
1-p_1(u,0)\sim C
\begin{cases}
e^{us_1^p} & \text{ if } \psi_1(x^-,S_2^\pm(x^-))<0,\\
u^{-\frac{3}{2}}e^{ux^-} & \text{ if } \psi_1(x^-,S_2^\pm(x^-))>0,\\
u^{-\frac{1}{2}}e^{ux^-} & \text{ if } \psi_1(x^-,S_2^\pm(x^-))=0,
\end{cases}
\]
for some constant $C$ which depends on the case, where $s_1^p$ is defined in~\eqref{eq:s1pBrown}.
\end{prop}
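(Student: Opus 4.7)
The plan is to follow the same scheme as in the proof of Proposition~\ref{prop:asympt} for the Poissonian model, exploiting that everything on the analytic side has a direct Brownian analog: the continuation formula \eqref{eq:continuationBrown} plays the role of \eqref{eq:continuation}, Lemma~\ref{lem:poleBrownian} plays the role of Lemma~\ref{lem:poleD}, and the Brownian branch point $x^-$ plays the role of $x_2$.

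First I would recast the problem in terms of the Laplace transform of $1-p_1(u,0)$, namely $1/s_1 - F_1(s_1)$. The probabilistic normalization $\hat F_1(0+) = 1$ (a consequence of Theorem~\ref{thm:domination}) makes the pole of $F_1$ at $s_1 = 0$ removable in this difference, so $0$ does not contribute to the asymptotics. Then I would identify the largest remaining singularity of $F_1$ on the real line to the left of $0$: by Lemma~\ref{lem:poleBrownian}, $s_1^p$ defined in \eqref{eq:s1pBrown} is a (simple) pole of $F_1$ precisely when $\psi_1(x^-, S_2^\pm(x^-)) < 0$, and in that case one checks that $x^- < s_1^p < 0$, so the dominant singularity is the pole $s_1^p$. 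Applying the standard Tauberian transfer (as cited via \cite{doetsch_introduction_1974} in the Poissonian case) that matches a simple pole at $a$ in the Laplace transform with the exponential asymptotics $C e^{a u}$ yields the first case.

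When $\psi_1(x^-, S_2^\pm(x^-)) \geq 0$ there is no pole to the left of $0$ in the continuation domain, so the largest singularity is the branch point $x^-$ of $S_2^+$. Here I would plug the local expansion of $S_2^+$ near $x^-$ (a constant plus a non-degenerate square-root term in $s_1 - x^-$, coming from the discriminant in the explicit formula for $S_2^\pm$) into the continuation formula \eqref{eq:continuationBrown}. If $\psi_1(x^-, S_2^\pm(x^-)) > 0$, the denominator in \eqref{eq:continuationBrown} stays bounded away from zero, giving
\[
F_1(s_1) \underset{s_1\to x^-}{=} C_1 + C_2\sqrt{s_1 - x^-} + \Oh(s_1 - x^-);
\]
if $\psi_1(x^-, S_2^\pm(x^-)) = 0$, both numerator and denominator vanish, and a careful Taylor expansion (the square-root term enters linearly in the denominator but not as a factor of the numerator) gives
\[
F_1(s_1) \underset{s_1\to x^-}{=} \dfrac{C_3}{\sqrt{s_1 - x^-}} + \Oh(1).
\]
Invoking the Tauberian theorem for singularities of the form $(s-a)^{-k}$ (here $k = -1/2$, respectively $k = 1/2$), one obtains the stated $u^{-3/2} e^{x^- u}$ and $u^{-1/2} e^{x^- u}$ asymptotics.

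The main obstacle is the $\psi_1(x^-, S_2^\pm(x^-)) = 0$ critical case: one has to verify that the square-root factor in the numerator really does not cancel the one in the denominator, so that a genuine $(s_1 - x^-)^{-1/2}$ singularity appears rather than a removable one. This is a local expansion check that proceeds just as in the Poissonian proof, using that the curves $\psi = 0$ and $\psi_1 = 0$ meet transversally generically, but meeting at the branch point $x^-$ is precisely the degeneracy we are measuring. Everything else is a direct translation of the Poissonian argument, the only structural change being that the branch cuts are half-lines $(-\infty, x^-] \cup [x^+, \infty)$ instead of the two compact intervals in Lemma~\ref{lem:branchpoint}, which does not affect the local analysis near $x^-$.
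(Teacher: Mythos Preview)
Your proposal is correct and follows essentially the same approach as the paper. In fact the paper's own proof here is a two-line pointer back to the Poissonian argument (Proposition~\ref{prop:asympt}), invoking Lemma~\ref{lem:poleBrownian} and the continuation formula~\eqref{eq:continuationBrown} and then the same transfer theorems; your write-up simply spells out those steps in more detail than the paper does.
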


\begin{proof}
The singularities (poles and branch points) of $F_1$ are known from Lemma~\ref{lem:poleBrownian} and equation~\eqref{eq:continuationBrown}. The asymptotics derives from standard transfer theorems as in the proof of Lemma~\ref{prop:asympt}.
\end{proof}

\subsection{Boundary value problem and its solution}
\label{sec:BVPBrownian}
We state an homogeneous Carleman BVP satisfied by the function $f_1$ defined in~\eqref{eq:f1BM}.

\begin{prop}[BVP]
\label{prop:BVPBrownian}
The function $f_1$ satisfies the following Carleman boundary value problem:
\begin{enumerate}[label=(\roman*)]
\item
\label{item:1a}
$f_1(s_1)$ is analytic on $\mcG_1$;

\item
\label{item:2a}
$\lim_{s_1\to\infty} f_1(s_1)=0$;

\item
\label{item:3a}
$f_1$ satisfies the boundary condition on the hyperbola
\[
f_1(\overline{s_1})=G(s_1)f_1(s_1),\quad \forall\, s_1\in\mcH_1,
\]
where
\begin{equation}
\label{eq:Gdefbrownien}
G(s_1)\coloneqq\dfrac{\psi_1}{\psi_2}(s_1,S_2^+(s_1)) \dfrac{\psi_2}{\psi_1}(\overline{s_1},S_2^+(s_1)).
\end{equation}
\end{enumerate}
\end{prop}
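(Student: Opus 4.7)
The plan is to imitate the proof of Proposition~\ref{prop:BVP} from the Poissonian case step by step, with the exterior-of-a-disc $\mcD_1$ replaced by the unbounded hyperbolic region $\mcG_1$ and the circle $\mcC_1$ replaced by the hyperbola $\mcH_1$. The heavy lifting is already done in Lemmas~\ref{lem:continuationBrown} and~\ref{lem:poleBrownian}, which provide the meromorphic extension of $F_1$ to $\mcG_1$ and identify its candidate poles as $\{0,s_1^p\}$, so only three verifications remain, matching items~(i)--(iii).

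For~(i), I would first check that neither $0$ nor $s_1^p$ lies in $\mcG_1$. This is the Brownian analogue of the Poissonian fact that $\mu_1<0$ forces $0\notin\mcD_1$, and can be verified by substituting $s_1=0$ and $s_1=s_1^p$ into the explicit hyperbola equation of Lemma~\ref{lem:hyperbole} and using assumptions~\eqref{A1} and~\eqref{A2}. Granted this, $f_1=F_1(s_1)-p_1(0,0)/s_1$ is analytic on $\mcG_1$, since $1/s_1$ is analytic there as well. For~(ii), I would appeal to the Abelian theorem for Laplace transforms: since $p_1(\cdot,0)$ is bounded and continuous at $0$ with value $p_1(0,0)$, one has $s_1F_1(s_1)\to p_1(0,0)$ as $s_1\to\infty$ along the positive real axis, whence $f_1(s_1)\to 0$. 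The extension to complex directions inside $\mcG_1$ follows from the continuation formula~\eqref{eq:continuationBrown} combined with the fact that $S_2^+(s_1)\to\infty$ through a region where $F_2$ decays, so that $F_1(s_1)=\Oh(1/s_1)$ with leading coefficient $p_1(0,0)$.

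For~(iii), the argument is purely algebraic and mirrors the Poissonian case verbatim. Fix $s_1\in\mcH_1$. By definition of $\mcH_1=S_1^\pm([y^+,\infty))$ there is a real $s_2\in [y^+,\infty)$ with $\psi(s_1,s_2)=0$; the real-coefficient symmetry of $\psi$ forces $\overline{s_1}\in\mcH_1$ with the same $s_2$, so that $s_2=S_2^+(s_1)=S_2^+(\overline{s_1})$. Substituting both $(s_1,s_2)$ and $(\overline{s_1},s_2)$ into the homogeneous kernel equation~\eqref{eq:kerneleqsimpleBrownian} gives
\begin{align*}
0 &= \psi_1(s_1,s_2)f_1(s_1)+\psi_2(s_1,s_2)f_2(s_2),\\
0 &= \psi_1(\overline{s_1},s_2)f_1(\overline{s_1})+\psi_2(\overline{s_1},s_2)f_2(s_2);
\end{align*}
eliminating $f_2(s_2)$ between the two yields $f_1(\overline{s_1})=G(s_1)f_1(s_1)$ with $G$ exactly as in~\eqref{eq:Gdefbrownien}.

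I expect the main technical hurdle to be the geometric step inside~(i), namely placing both candidate poles outside the hyperbolic region $\mcG_1$, since the hyperbolic geometry is less symmetric than the circular one. Should the direct substitution into Lemma~\ref{lem:hyperbole} prove cumbersome, a cleaner alternative is to reuse the maximum-principle argument underlying Lemma~\ref{lem:continuationBrown}: one shows $\mcG_1\subset\{s_1\colon \Re S_2^+(s_1)>0\}$, inside which the poles of the continuation~\eqref{eq:continuationBrown} can only come from zeros of $\psi_1(s_1,S_2^+(s_1))$, and one then verifies directly that both $0$ and $s_1^p$ fall in the complementary region under~\eqref{A1} and~\eqref{A2}.
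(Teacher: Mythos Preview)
Your treatments of items~(ii) and~(iii) are correct and match the paper's argument (which simply defers to the Poissonian Proposition~\ref{prop:BVP}): the limit in~(ii) follows from $F_1(s_1)\to 0$ together with $p_1(0,0)/s_1\to 0$, and the boundary relation~(iii) is obtained by eliminating $f_2(S_2^+(s_1))$ from the two evaluations of the homogeneous kernel equation~\eqref{eq:kerneleqsimpleBrownian}, exactly as you wrote.

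Item~(i), however, contains a genuine gap. Your plan is to verify that neither $0$ nor $s_1^p$ lies in $\mcG_1$, but in the Brownian model this is \emph{false in general}. Indeed, the vertex of $\mcH_1$ on the real axis is $S_1^\pm(y^+)=(|\mu_1|-\rho\sigma_1\sigma_2 y^+)/\sigma_1^2$, and for $\rho$ close to~$1$ this is negative (e.g.\ $\sigma_1=\sigma_2=1$, $\mu_1=\mu_2=-1$, $\rho=0.9$ gives $S_1^\pm(y^+)\approx -1.59$), so that $0\in\mcG_1$. The paper acknowledges this explicitly: the constants $\kappa_1,\kappa_2$ introduced just before Theorem~\ref{thm:explicitF1Brown} are defined precisely so that $\kappa_i=1$ when the corresponding pole lies in $\mcG_1$, and the factor $(W(s_1)-W(0))^{-\kappa_1}(W(s_1)-W(s_1^p))^{-\kappa_2}$ in $X(s_1)$ is there to reproduce those poles. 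Note also that the subtraction $f_1=F_1-p_1(0,0)/s_1$ does \emph{not} remove the pole of $F_1$ at $0$: since $s_1F_1(s_1)\to 1$ as $s_1\to 0^+$, the residue of $f_1$ at $0$ is $1-p_1(0,0)\neq 0$.

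In other words, the statement of item~(i) as written is an imprecision in the paper: what is actually used downstream is that $f_1$ is meromorphic on $\mcG_1$ with at most simple poles at $0$ and $s_1^p$ (this is the content of Lemmas~\ref{lem:continuationBrown} and~\ref{lem:poleBrownian}), and the BVP is then solved allowing for these known poles via the $\kappa_1,\kappa_2$ mechanism. Your alternative route via the maximum principle would establish meromorphy on $\mcG_1$, but it cannot place the poles outside $\mcG_1$ because they need not be outside.
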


\begin{proof}
The proof follows the same steps as that of Proposition~\ref{prop:BVP}.
\end{proof}

Following~\cite{franceschi_tuttes_2016,franceschi_2019} we are going to define the conformal gluing function which glues together the upper part of the hyperbola and its lower part. To that purpose we define for $a\ges 0$ the generalized Chebyshev polynomial for $x\in\mbC\setminus (-\infty,-1]$ by
\[
T_a(x)\coloneqq\cos(a\arccos(x))=\dfrac{1}{2}\left((x+\sqrt{x^2-1})^a+ (x-\sqrt{x^2-1})^a\right).
\]
Let also define the angle of the model
\[
\beta\coloneqq\arccos(-\rho).
\]

\begin{lem}[Conformal gluing function $W$]
\label{lem:gluingwBrownien}
The function
\begin{equation}
\label{eq:wdefBrown}
W(s_1)\coloneqq T_{\frac{\pi}{\beta}}\left(\dfrac{2s_1-(x^++x^-)}{x^+-x^-}\right)
\end{equation}
satisfies the following properties
\begin{enumerate}[label=(\roman*)]
\item
$W$ is holomorphic in $\mcG_1$ and continuous on $\overline{\mcG_1}$;

\item
$W$ is injective in $\mcG_1$;

\item
$W$ satisfies the boundary property
\[
W(s_1)=W(\overline{s_1}),\quad \forall\, s_1\in\mcH_1.
\]
\end{enumerate}
\end{lem}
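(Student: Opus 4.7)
The plan is to write $W$ as the composition $W = T_{\pi/\beta}\circ \phi$, where $\phi(s_1) := (2s_1 - (x^++x^-))/(x^+-x^-)$ is the affine bijection of $\mbC$ sending $x^\pm$ to $\pm 1$ and $[x^-, x^+]$ to $[-1, 1]$. Each of the three claims about $W$ on $\mcG_1$ then translates to the corresponding statement about the generalized Chebyshev function $T_{\pi/\beta}$ on the image domain $\phi(\mcG_1)$, which is a standard object one can analyze via the parameterization $z = \cos\theta$.

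The key preliminary step, and the main technical obstacle, is the geometric identification of $\phi(\mcH_1)$. Substituting $z = \phi(s_1)$ into the hyperbola equation of Lemma~\ref{lem:hyperbole} and using the identity $\cos\beta = -\rho$, I expect (by a tedious but elementary computation mirroring \cite[Lemma~4]{franceschi_2019}) the equation to reduce to the standard form $x^2/\cos^2\beta - y^2/\sin^2\beta = 1$, yielding the explicit parametrization
\[
\phi(\mcH_1) = \{\cos(\beta + it) : t \in \mbR\},
\]
a single branch of this hyperbola with vertex at $\cos\beta = -\rho$. Correspondingly, $\phi(\mcG_1)$ becomes the connected component of the complement containing $1$, and the leftmost real point of $\phi(\overline{\mcG_1})$ is the vertex $-\rho > -1$. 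The care required here is tracking which branch of the hyperbola matches $\mcH_1$ as $\rho$ varies (it degenerates to a vertical line at $\rho = 0$ and flips sides as $\rho$ crosses $0$) and verifying the vertex location.

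Given this geometric picture, the three items follow from standard properties of $T_a$. For \textbf{(i)}, the function $T_a(z) = \cos(a\arccos z)$ is holomorphic on $\mbC\setminus(-\infty,-1]$ and continuous up to the cut; the apparent branching at $z = 1$ is removable because only even powers of $\arccos z$ appear in the Taylor expansion of $\cos$ at $0$. Since $\phi(\overline{\mcG_1})$ avoids $(-\infty,-1]$, holomorphy on $\mcG_1$ and continuity on $\overline{\mcG_1}$ follow. For \textbf{(ii)}, parameterize $\phi(\mcG_1)$ by $z = \cos\theta$ with $\theta$ in the strip $\{0 \leq \Re\theta < \beta\}$ (with the identification $\theta \sim -\theta$ on the imaginary axis corresponding to the segment $[1,\infty)\subset \phi(\mcG_1)$); then $T_{\pi/\beta}(z) = \cos(\pi\theta/\beta)$ has argument of real part in $[0,\pi)$, where $\cos$ is injective modulo the same identification, yielding injectivity of $W$ on $\mcG_1$.

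For \textbf{(iii)}, for $s_1\in\mcH_1$ write $\phi(s_1) = \cos(\beta + it)$ for some $t\in\mbR$; then $\phi(\overline{s_1}) = \overline{\phi(s_1)} = \cos(\beta - it)$, and applying the identity $T_a\circ\cos = \cos(a\,\cdot)$ on the strip gives
\[
W(s_1) = \cos(\pi + i\pi t/\beta) = -\cosh(\pi t/\beta) = \cos(\pi - i\pi t/\beta) = W(\overline{s_1}).
\]
Once the preliminary geometric step is in hand, all subsequent arguments are formal manipulations with Chebyshev-type functions and involve no further surprises.
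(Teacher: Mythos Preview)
The paper's own proof is simply a citation: it refers to \cite[Lemma~3.4]{franceschi_tuttes_2016} and \cite[Figure~3]{foschini_equilibria_82} and provides no argument of its own. Your proposal, by contrast, sketches a self-contained proof via the decomposition $W = T_{\pi/\beta}\circ\phi$ and the parametrization $\phi(\mcH_1) = \{\cos(\beta+it):t\in\mbR\}$, reducing everything to the standard mapping properties of $\theta\mapsto\cos\theta$ on vertical strips. This is precisely the argument one finds in the cited references, so in substance you are reconstructing the literature rather than taking a genuinely different route.

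Your outline is correct, with one caveat. The identification $\phi(\mcH_1) = \{\cos(\beta+it):t\in\mbR\}$ is the crux of the entire lemma, and you phrase it as ``I expect (by a tedious but elementary computation\ldots)''. This step is indeed elementary---one checks that the affine normalization sends the hyperbola of Lemma~\ref{lem:hyperbole} to $x^2/\cos^2\beta - y^2/\sin^2\beta = 1$ and that the correct branch (vertex at $\cos\beta=-\rho$) is selected for each sign of $\rho$---but it is the only place where the specific geometry of the Brownian kernel enters, so in a complete write-up it should be carried out rather than asserted. Once it is in hand, items (i)--(iii) follow exactly as you describe: (i) from the fact that $-\rho>-1$ keeps $\phi(\overline{\mcG_1})$ away from the branch cut $(-\infty,-1]$; (ii) from the bijection $\cos:\{0\le\Re\theta<\beta\}/(\theta\sim-\theta)\to\phi(\mcG_1)$ composed with the rescaling $\theta\mapsto(\pi/\beta)\theta$; and (iii) from the direct computation $\cos(\pi\pm i\pi t/\beta) = -\cosh(\pi t/\beta)$.
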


\begin{proof}
This function has already been studied in several paper. See, for example, \cite[Lemma~3.4]{franceschi_tuttes_2016} or also~\cite[Figure~3]{foschini_equilibria_82} in the case of symmetric conditions.
\end{proof}

To state the main theorem of this section we define
\begin{equation*}
\kappa_1\coloneqq
\begin{cases}
1 & \text{if } 0>S_1^\pm(y^+),\\
0 & \text{if } 0\les S_1^\pm(y^+),
\end{cases}
\quad
\text{and}
\quad
\kappa_2\coloneqq
\begin{cases}
1 & \text{if } \psi_1(x^-,S_2^\pm(x^-))<0 \text{ and } s_1^p>S_1^\pm(y^+),\\
0 & \text{otherwise.}
\end{cases}
\end{equation*}
Using Lemma~\ref{lem:poleBrownian} we note that $\kappa_1$ is defined so that $\kappa_1=1$ when the pole $0$ of $F_1$ is in $\mcG_1$, and $\kappa_1=0$ otherwise. In the same way $\kappa_2=1$ when $s_1^p$ is a pole and is in $\mcG_1$, and $\kappa_2=0$ otherwise.

Let us recall that $W$ is defined in~\eqref{eq:wdefBrown}, $G$ in~\eqref{eq:Gdefbrownien}, $\mcH_1^-$ in Lemma~\ref{lem:hyperbole} and $c$ in~\eqref{eq:defc}.

\begin{thm}[Explicit expression for $F_1$]
\label{thm:explicitF1Brown}
The Laplace transform $F_1$ is given by
\begin{equation}
\label{eq:explicitexpressionBrownian}
F_1(s_1)=p_1(0,0)\left(\dfrac{1}{s_1}+CX(s_1)\right),\quad s_1\in\mcG_1,
\end{equation}
where
\begin{multline}
X(s_1)\coloneqq\left(\dfrac{1}{W(s_1)-W(0)}\right)^{\kappa_1} \left(\dfrac{1}{W(s_1)-W(s_1^p)}\right)^{\kappa_2}\times\\
\times\exp\left(\dfrac{1}{2i\pi} \int_{\mcH_1^-} \log(G(t))\dfrac{W'(t)}{W(t)-W(s_1)}\D t\right),
\end{multline}
\begin{equation}
\label{eq:defConstantC}
C\coloneqq -\dfrac{1}{X(x_0)}\left(\dfrac{1}{x_0}+\dfrac{c}{\psi_1(x_0,0)}\right).
\end{equation}
Furthermore, $p_1(0,0)$ is given by Corollary~\ref{cor:BMp00} for $\rho=0$, whereas for $\rho\in\big(0,\frac{1}{2}\frac{\sigma_2\mu_1}{\sigma_1\mu_2}\big)$ we have
\begin{equation}
\label{eq:p1Brownian}
p_1(0,0)=\dfrac{\frac{1}{2}\sigma^2_2(r_2-\mu_1/\mu_2)\psi_2(S_1^+(y_0),y_0)} {c(\psi_2(S_1^+(y_0),y_0)-\psi_2(0,y_0))-\psi_2(0,y_0)\psi_1(S_1^+(y_0),y_0) \left(1/S_1^+(y_0)+CX(S_1^+(y_0))\right)}.
\end{equation}
and for $\rho\in\big[\frac{1}{2}\frac{\sigma_2\mu_1}{\sigma_1\mu_2},1\big)$ we have
\begin{equation}
\label{eq:p1Brownianbis}
p_1(0,0)=\dfrac{1}{1+C\lim_{s_1\to 0} s_1X(s_1)}.
\end{equation}
where
\begin{multline}
\label{eq:limsXs}
\lim_{s_1\to 0} s_1X(s_1)=\dfrac{1}{W'(0)}\left(\dfrac{1}{W(0)-W(s_1^p)}\right)^{\kappa_2}\times\\
\times\exp\left(\dfrac{1}{2i\pi}\int_{\mcH_1^-} \log(G(t))\dfrac{W'(t)}{W(t)-W(0)}\D t\right).
\end{multline}
\end{thm}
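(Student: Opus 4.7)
The plan is to mirror the strategy used for Theorem~\ref{thm:BVPsolution}: reduce the Carleman BVP of Proposition~\ref{prop:BVPBrownian} to a scalar homogeneous Riemann problem on a segment via the conformal gluing $W$ of Lemma~\ref{lem:gluingwBrownien}, solve it using the Sokhotsky--Plemelj formulas, and pin down the free constants by evaluating the kernel equation at distinguished points on $\{\psi=0\}$.

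The first step is to introduce an auxiliary function $\tilde{f}_1(x) := (x - W(0))^{\kappa_1}(x - W(s_1^p))^{\kappa_2} (f_1 \circ W^{-1})(x)$, where the factors absorb the poles of $F_1$ at $0$ and $s_1^p$ when they lie in $\mcG_1$ and simultaneously normalize the index of the associated Riemann problem. By Lemma~\ref{lem:gluingwBrownien} and Proposition~\ref{prop:BVPBrownian}, $\tilde{f}_1$ is analytic off the image of $\mcH_1$ and satisfies a scalar condition $\tilde{f}_1^+ = \tilde{G}\tilde{f}_1^-$ with $\tilde{G}$ the pushforward of $G$. The classical formula \cite[Theorem~5.2.3]{fayolle_random_2017} then yields a fundamental solution $\tilde{X}$ expressed as an exponential of a Cauchy-type integral of $\log\tilde{G}$; the quotient $\tilde{f}_1/\tilde{X}$ extends to an entire function, and the asymptotic constraint $f_1(s_1) \to 0$ at infinity (Proposition~\ref{prop:BVPBrownian}(ii)) together with a Liouville-type argument forces it to be a constant. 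A change of variables $u = W(t)$ moves the contour back to $\mcH_1^-$ and reproduces the formula for $X(s_1)$, giving $F_1(s_1) = p_1(0,0)(1/s_1 + C X(s_1))$.

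The constant $C$ is obtained exactly as in the Poisson case: the identity $\hat F(x_0, 0+) = 0$ from Theorem~\ref{thm:domination} yields $\psi_1(x_0, 0) F_1(x_0) + c\, p_1(0,0) = 0$, which upon substitution of the explicit formula~\eqref{eq:explicitexpressionBrownian} at $x_0$ produces~\eqref{eq:defConstantC}. For $p_1(0,0)$ the strategy splits by $\rho$. When $\rho \in (0, \tfrac{1}{2}\sigma_2\mu_1/(\sigma_1\mu_2))$, the point $S_1^+(y_0)$ still lies in the domain of analyticity of $F_1$ (see Lemma~\ref{lem:continuationBrown}), so evaluating the kernel equation at $(S_1^+(y_0), y_0)$ and combining with the limit at $(0+, y_0)$ using $\hat F_1(0+) = 1$ yields the linear system solved by~\eqref{eq:p1Brownian}. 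In the complementary range $\rho \in [\tfrac{1}{2}\sigma_2\mu_1/(\sigma_1\mu_2), 1)$, this point escapes the admissible region, so I would instead apply $p_1(0,0) = \lim_{s_1 \to 0} s_1 F_1(s_1)$ directly to~\eqref{eq:explicitexpressionBrownian}; after canceling $p_1(0,0)$ this gives~\eqref{eq:p1Brownianbis}, with the limit~\eqref{eq:limsXs} recovered by a Taylor expansion of $W$ at $0$ noting that $W'(0)\neq 0$.

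The main obstacle I foresee is the precise verification of the index of the Riemann problem and of the exponents $\kappa_1, \kappa_2$. An argument analogous to Lemma~\ref{lem:chipole} is needed to track the variation of $\arg G$ along the unbounded curve $\mcH_1$, and this is more delicate than in the Poissonian case because the hyperbola extends to infinity where $G$ must be analyzed asymptotically. One must show that the factors $(W - W(0))^{-\kappa_1}$ and $(W - W(s_1^p))^{-\kappa_2}$ in $X$ provide exactly the poles that $F_1$ inherits from $\mcG_1$ via Lemma~\ref{lem:poleBrownian}. A secondary point is the $\rho$-dependent dichotomy: one must verify that $\rho = \tfrac{1}{2}\sigma_2\mu_1/(\sigma_1\mu_2)$ is precisely the threshold at which $S_1^+(y_0)$ crosses out of the analyticity domain identified in Lemma~\ref{lem:continuationBrown}, and that the two formulas~\eqref{eq:p1Brownian} and~\eqref{eq:p1Brownianbis} are mutually consistent as $\rho$ approaches this boundary.
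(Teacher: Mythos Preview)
Your overall strategy matches the paper's: transport the Carleman BVP of Proposition~\ref{prop:BVPBrownian} through $W$, solve the resulting Riemann problem via Sokhotsky--Plemelj and Liouville, then pin down the constants $C$ and $p_1(0,0)$ by evaluating the kernel equation on $\{\psi=0\}$. The determination of $C$ via $\psi_1(x_0,0)F_1(x_0)+c\,p_1(0,0)=0$ is exactly what the paper does, and your treatment of the case $\rho\in(0,\tfrac12\sigma_2\mu_1/(\sigma_1\mu_2))$ is also correct.

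There are, however, two genuine slips in the complementary case $\rho\in[\tfrac12\sigma_2\mu_1/(\sigma_1\mu_2),1)$. First, the dichotomy is not that $S_1^+(y_0)$ ``escapes the admissible region'': the paper computes
\[
S_1^+(y_0)=\sigma_1^{-2}\Big(2\mu_2\rho\sigma_1/\sigma_2-\mu_1+\sqrt{(2\mu_2\rho\sigma_1/\sigma_2-\mu_1)^2}\Big),
\]
which stays in $\mcG_1\cap[0,\infty)$ for all $\rho\in[0,1)$ but \emph{collapses to $0$} precisely when $\rho\ges\tfrac12\sigma_2\mu_1/(\sigma_1\mu_2)$. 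So the obstruction is that $S_1^+(y_0)$ coincides with the pole of $F_1$, not that it leaves the domain. Second, and more importantly, the identity you invoke is wrong: $\lim_{s_1\to 0}s_1F_1(s_1)$ is \emph{not} $p_1(0,0)$ but rather $\hat F_1(0+)=1$, coming from $p_1(u,0)\to 1$ as $u\to\infty$ (Theorem~\ref{thm:domination}). With your identity the substitution into~\eqref{eq:explicitexpressionBrownian} yields the tautology $p_1(0,0)=p_1(0,0)(1+C\lim s_1X(s_1))$ and you cannot recover~\eqref{eq:p1Brownianbis}. Using the correct limit $1=\lim_{s_1\to 0}s_1F_1(s_1)=p_1(0,0)\big(1+C\lim_{s_1\to 0}s_1X(s_1)\big)$, together with $\kappa_1=1$ in this regime (since $0\in\mcG_1$), gives~\eqref{eq:p1Brownianbis} and~\eqref{eq:limsXs} immediately.
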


\begin{proof}
The proof follows the same steps as the one of Theorem~\ref{thm:BVPsolution} and also the one of~\cite[Theorem~1]{franceschi_2019}. Solving the BVP of Proposition~\ref{prop:BVPBrownian} in a standard way we find that there exists a constant $C'$ such that
\[
F_1(s_1)=\dfrac{p_1(0,0)}{s_1}+C'X(s_1).
\]
We now compute the value of $C'$. Taking the limit of the kernel equation in $(x_0,0)$ (as in the proof of Lemma~\ref{lem:F0}) we obtain that
\[
0=\psi_1(x_0,0)F_1(x_0)+cp_1(0,0).
\]
Combining this equation with the fact that $F_1(x_0)=\frac{p_1(0,0)}{x_0}+C'X(x_0)$, we deduce that $C'=Cp_1(0,0)$, where $C$ is defined in~\eqref{eq:defConstantC} and we obtain~\eqref{eq:explicitexpressionBrownian}.

It remains to find $p_1(0,0)$ in the case $\rho\in (0,1)$. First, it is important to note that $S_1^+(y_0)\in\mcG_1\cap [0,\infty)$. The positivity is easy to see because
\[
S_1^+(y_0)=\dfrac{2\mu_2\rho\sigma_1/\sigma_2-\mu_1+ \sqrt{(2\mu_2\rho\sigma_1/\sigma_2-\mu_1)^2}}{\sigma_1^2}\ges 0,
\]
and $S_1^+(y_0)\in\mcG_1$, because
\[
S_1^+(y_0)-S_1^+(y_+)=\dfrac{\rho\sigma_1\sigma_2(y^+-y_0)+ \sqrt{(\mu_1-2\mu_2\rho\sigma_1/\sigma_2)^2}}{\sigma_1^2}\ges 0
\]
as $y^+-y_0\ges 0$. We see that $S_1^+(y_0)=0$ if and only if $\rho\ges\frac{1}{2} \frac{\sigma_2\mu_1}{\sigma_1\mu_2}$.

First assume that $S_1^+(y_0)=0$. We obtain with~\eqref{eq:explicitexpressionBrownian}
\[
1=\lim_{s_1\to 0} s_1F_1(s_1)=p_1(0,0)\left(1+C\lim_{s_1\to 0} s_1X(s_1)\right),
\]
which gives~\eqref{eq:p1Brownianbis}. In this case $\kappa_1=1$ and we obtain~\eqref{eq:limsXs}.

Assume now that $S_1^+(y_0)>0$. As in the proof of Corollary~\ref{cor:BMp00} we evaluate the kernel equation at $(0+,y_0)$. We get the same~\eqref{eq:BM2}, even though initially there is the term $\rho\sigma_1\sigma_2$ on both sides. The second equation is obtained by using the point $(S_1^+(y_0),y_0)$:
\[
0=\psi_1(S_1^+(y_0),y_0)F_1(S_1^+(y_0))+\psi_2(S_1^+(y_0),y_0)F_2(y_0)+cp_1(0,0).
\]
The third equation we need is~\eqref{eq:explicitexpressionBrownian} with $s_1=S_1^+(y_0)$:
\[
F_1(S_1^+(y_0))=p_1(0,0)\left(\dfrac{1}{S_1^+(y_0)}+CX(S_1^+(y_0))\right).
\]

Solving these three linear equations with the three unknowns $p_1(0,0)$, $F_2(y_0)$ and $F_1(S_1^+(y_0))$ we obtain~\eqref{eq:p1Brownian}.
\end{proof}

\section{Numerical illustration}
\label{sec:num}
This section provides numerical illustrations of some of our basic formulas. That is, we consider $p_1(0,0)$, the probability of domination by the first component when starting at the origin, for both (i) the Poisson model, see~\eqref{eq:p1Poisson}, and (ii) the Brownian model, see~\eqref{eq:p1Brownian}. The computations were performed using Mathematica and the R~language. It must be mentioned that numerical evaluation of the involved contour integrals is not a straightforward task, and certain care should be taken with the branches of the complex logarithm and the square root.

\begin{figure}[ht!]
\begin{center}
\begin{subfigure}{0.45\textwidth}
\begin{center}
\includegraphics[height=0.18\textheight,width=0.25\textheight]{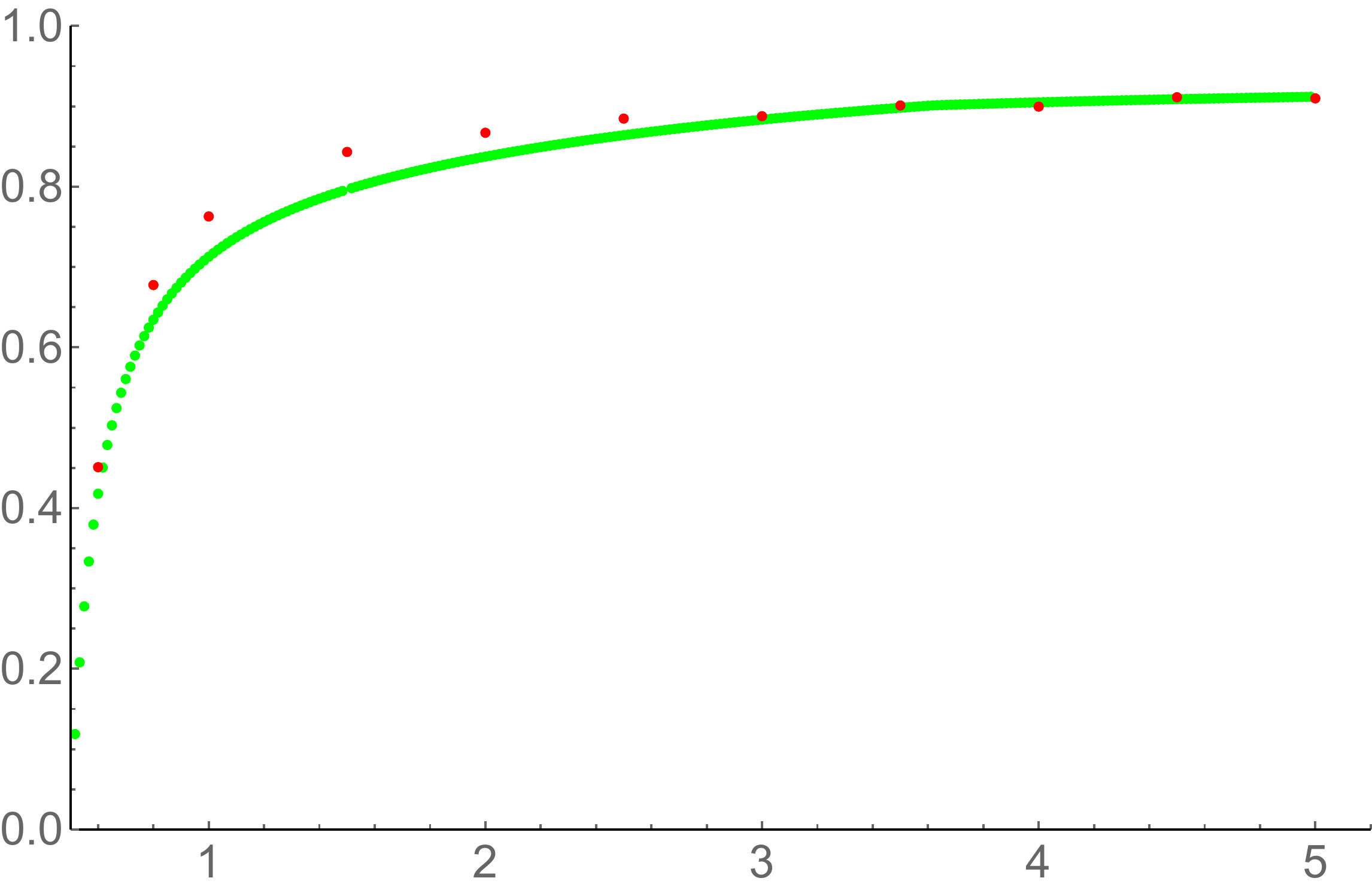}
\caption{Poisson model}
\label{fig:simulations_CPP}
\end{center}
\end{subfigure}
\begin{subfigure}{0.45\textwidth}
\begin{center}
\includegraphics[height=0.18\textheight,width=0.25\textheight]{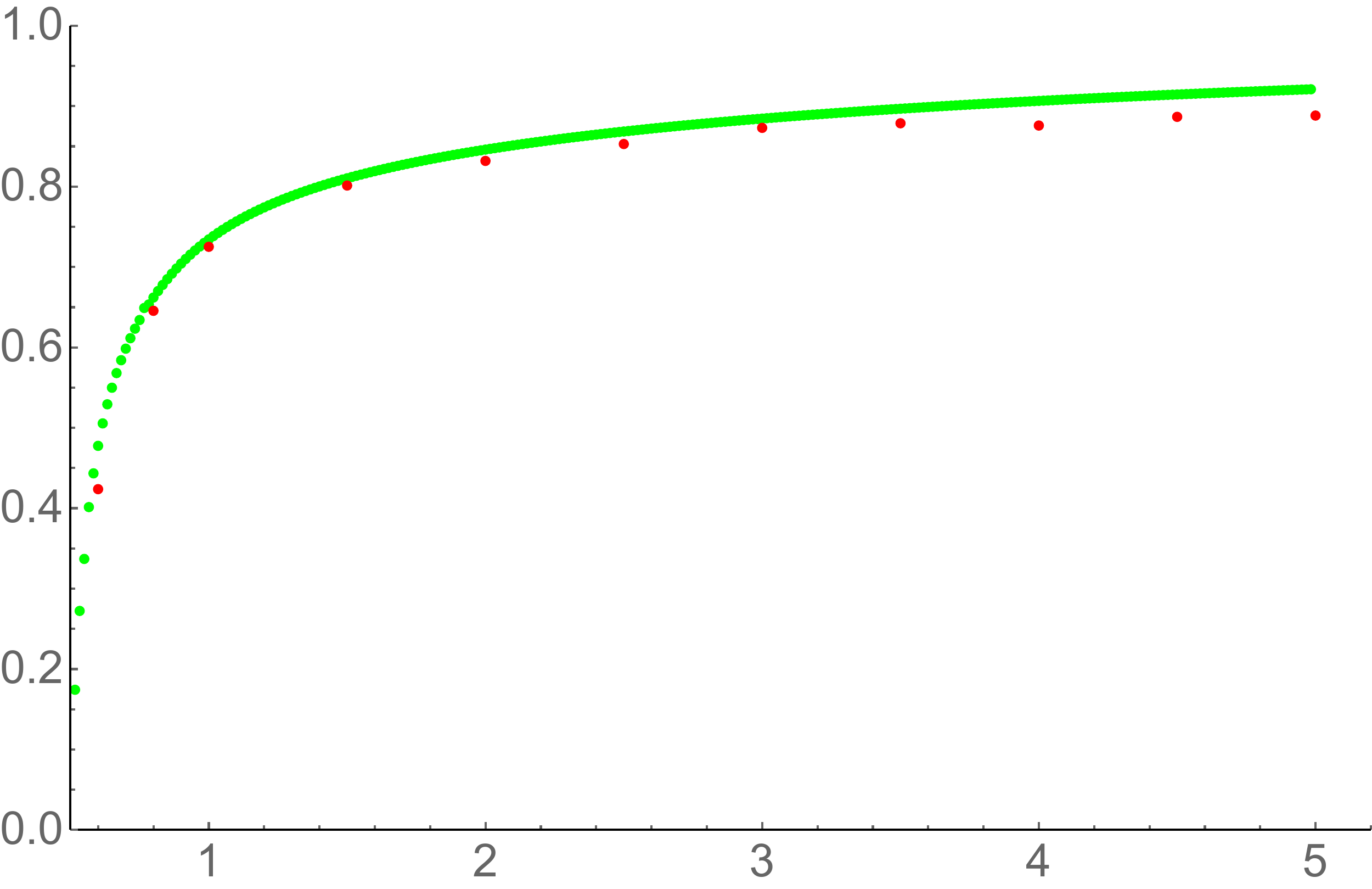}
\caption{Brownian model}
\label{fig:simulations_BM}
\end{center}
\end{subfigure}
\caption{The values of $p_1(0,0)$ computed using contour integrals in green, see~\eqref{eq:p1Poisson} and~\eqref{eq:p1Brownian}, and Monte Carlo simulations in red for a range of $r_2>0.5$.}
\label{fig:numerics}
\end{center}
\end{figure}

Figure~\ref{fig:numerics} presents the plots of $p_1(0,0)$ (in green) as a function of the reflection parameter $r_2>0.5$. For both models we take $r_1=2.5$ and $X_1(1)$, $X_2(1)$ with unit variances and the means $\mu_1=-1$, $\mu_2=-2$. More precisely, in the Poisson model we take $c_1=c_2=1$, $\lambda_1=8$, $\lambda_2=18$, $q_1=4$, $q_2=6$. In the Brownian model we take correlation $\rho=0.2$. It must be mentioned that we use~\eqref{eq:p1Brownian} and not~\eqref{eq:p1Brownianbis}, since $\rho<1/4$. Furthermore, the rates in the Poisson model are rather high, which suggest that the respective uncorrelated Brownian approximation should be close, see~\S\ref{sec:example}. In fact, the corresponding curve drawn basing on the explicit expression in~\eqref{eq:BMp00} almost coincides with the green curve in Figure~\ref{fig:numerics}(a).

In order to check our numerical results, we also perform the Monte Carlo simulation (red dots). It should be stressed that our simulation involves various sources of errors. Firstly, a single run is terminated when $Y_1>100$ and $Y_2/Y_1<0.1$ (at the time of a jump) or the analogous condition is satisfied with the indices swapped. In the first/second case we assume that the first/second component dominates. The Poisson simulation is otherwise exact, whereas the Brownian model is discretized with time-step~$0.01$ so that we reflect a random walk with the corresponding normal increments. In this regard, it is noted that an approximation result similar to that in~\S\ref{sec:approx} can be established also for random walks. Finally, each value is obtained from $10,000$ independent realizations, and thus the $95\%$ asymptotic confidence interval corresponds to $\pm 0.02\sqrt{p_1(1-p_1)}$.

\appendix

\section{Derivation of the kernel equation for the Poisson model with common jumps}
\label{sec:derivation}
\begin{proof}[Proof of Proposition~\ref{kernel_equation_Poissonian_approximation}]
For the sake of brevity, here we consider only the case when $r_1\sigma_1>\sigma_2$ and $r_2\sigma_2>\sigma_1$; the derivation of the kernel equations for other cases is similar (the cases with equalities should be considered separately or treated by approximation).

Let $A$ denote the event that the first coordinate dominates the second one. Obviously, for any $u,v>0$ and $h>0$ we have
\begin{equation*}
\begin{gathered}
\begin{aligned}
p_1(u,v)=\p_{(u,v)}(A)&=\p_{(u,v)}(A\cap\{\text{$X$ makes at most one jump on $[0,h]$}\})+\\
&+\p_{(u,v)}(A\cap\{\text{$X$ makes at least two jumps on $[0,h]$}\}).
\end{aligned}
\end{gathered}
\end{equation*}
It is easy to see that the second term is $\Oh(h^2)=\oh(h)$ as $h\to 0+$, uniformly in $(u,v)$.

Now fix arbitrary $u,v>0$. Using the Markov property and considering all possible cases with at most one jump on the time interval $[0,h]$, we obtain
\begin{align*}
p_1(u,v)&=(1-\lambda h)(1-\lambda_1h)(1-\lambda_2h)p_1(u+c_1h,v+c_2h)+\\
&+\lambda h(1-\lambda_1h)(1-\lambda_2h)\int\limits_0^{(\oq_1u)\wedge (\oq_2v)} \D x\; p_1(u-x/\oq_1,v-x/\oq_2)\cdot e^{-x}+\\
&+\lambda h(1-\lambda_1h)(1-\lambda_2h)\int\limits_{\oq_2v}^{\oq_1u}\D x\; p_1(u-x/\oq_1+r_2(x/\oq_2-v),0)\cdot e^{-x}\cdot\indf\left\lbrace\oq_1u> \oq_2v\right\rbrace+\\
&+\lambda h(1-\lambda_1h)(1-\lambda_2h)\int\limits_{\oq_1u}^{\oq_2v}\D x\; p_1(0,v-x/\oq_2+r_1(x/\oq_1-u))\cdot e^{-x}\cdot\indf\left\lbrace\oq_2v> \oq_1u\right\rbrace+\\
&+\lambda h(1-\lambda_1h)(1-\lambda_2h)\int\limits_{\oq_1u}^\infty\D x\; p_1(u-x/\oq_1+r_2(x/\oq_2-v),0)\cdot e^{-x}\times\\
&\times\indf\left\lbrace r_1u-v>(r_1/\oq_1-1/\oq_2)x,\; \oq_1u>\oq_2v\right\rbrace+\\
&+\lambda h(1-\lambda_1h)(1-\lambda_2h)\int\limits_{\oq_2v}^\infty\D x\; p_1(0,v-x/\oq_2+r_1(x/\oq_1-u))\cdot e^{-x}\times\\
&\times\indf\left\lbrace r_2v-u>(r_2/\oq_2-1/\oq_1)x,\; \oq_2v>\oq_1u\right\rbrace+\\
&+\lambda h(1-\lambda_1h)(1-\lambda_2h)\int\limits_{\oq_1u}^\infty\D x\; p_1(0,0)\cdot e^{-x}\cdot\indf\left\lbrace (r_1/\oq_1-1/\oq_2)x\ges r_1u-v,\; \oq_1u>\oq_2v\right\rbrace+\\
&+\lambda h(1-\lambda_1h)(1-\lambda_2h)\int\limits_{\oq_2v}^\infty\D x\; p_1(0,0)\cdot e^{-x}\cdot\indf\left\lbrace (r_2/\oq_2-1/\oq_1)x\ges r_2v-u,\; \oq_2v>\oq_1u\right\rbrace+\\
&+\lambda_1h(1-\lambda h)(1-\lambda_2h)\int\limits_0^{q_1u}\D x\; p_1(u-x/q_1,v)\cdot e^{-x}+\\
&+\lambda_1h(1-\lambda h)(1-\lambda_2h)\int\limits_{q_1u}^\infty\D x\; p_1(0,v+r_1(x/q_1-u))\cdot e^{-x}+\\
&+\lambda_2h(1-\lambda_1h)(1-\lambda h)\int\limits_0^{q_2v}\D x\; p_1(u,v-y/q_2)\cdot e^{-x}+\\
&+\lambda_2h(1-\lambda_1h)(1-\lambda h)\int\limits_{q_2v}^\infty\D x\; p_1(u+r_2(x/q_2-v),0)\cdot e^{-x}+\\
&+\oh(h),\quad h\to 0+.
\end{align*}

Multiplying both sides by $e^{-s_1u-s_2v}$ and integrating the result over $[0,\infty)\times [0,\infty)$ with respect to the variables $u$ and $v$, we obtain
\begin{gather*}
\int\limits_0^\infty \int\limits_0^\infty p_1(u,v)\cdot e^{-s_1u-s_2v}\D u\D v= (1-(\lambda+\lambda_1+\lambda_2)h)\int\limits_0^\infty \int\limits_0^\infty p_1(u+c_1h,v+c_2h)\cdot e^{-s_1u-s_2v}\D u\D v+\\
+\lambda h(I_1+I_2+I_3+I_4+I_5+I_6+I_7)+\lambda_1h(I_8+I_9)+ \lambda_2h(I_{10}+I_{11})+\oh(h),\quad h\to 0+.
\end{gather*}

Noting that
\begin{gather*}
\int\limits_0^\infty \int\limits_0^\infty p_1(u,v)\cdot e^{-s_1u-s_2v}\D u\D v-(1-(\lambda+\lambda_1+\lambda_2)h)\int\limits_0^\infty \int\limits_0^\infty p_1(u+c_1h,v+c_2h)\cdot e^{-s_1u-s_2v}\D u\D v=\\
=\int\limits_{c_1h}^\infty \int\limits_{c_2h}^\infty p_1(u,v)\cdot e^{-s_1u-s_2v}\D u\D v+ \int\limits_{c_1h}^\infty \int\limits_0^{c_2h} p_1(u,v)\cdot e^{-s_1u-s_2v}\D u\D v+\\
+\int\limits_0^{c_1h} \int\limits_{c_2h}^\infty p_1(u,v)\cdot e^{-s_1u-s_2v}\D u\D v+ \int\limits_0^{c_1h} \int\limits_0^{c_2h} p_1(u,v)\cdot e^{-s_1u-s_2v}\D u\D v-\\
-e^{s_1c_1h+s_2c_2h}\int\limits_{c_1h}^\infty \int\limits_{c_2h}^\infty p_1(u,v)\cdot e^{-s_1u-s_2v}\D u\D v+\\
+(\lambda+\lambda_1+\lambda_2)he^{s_1c_1h+s_2c_2h}\int\limits_{c_1h}^\infty \int\limits_{c_2h}^\infty p_1(u,v)\cdot e^{-s_1u-s_2v}\D u\D v=\\
=\left[(\lambda+\lambda_1+\lambda_2-s_1c_1-s_2c_2)F(s_1,s_2)+c_2F_1(s_1)+c_1F_2(s_2)\right]h+\oh(h),
\end{gather*}
we conclude that
\begin{equation}
\label{eq:main_equation}
\begin{split}
(\lambda+\lambda_1+\lambda_2-s_1c_1-s_2c_2)F(s_1,s_2)+c_2F_1(s_1)+c_1F_2(s_2)=\\
=\lambda(I_1+I_2+I_3+I_4+I_5+I_6+I_7)+\lambda_1(I_8+I_9)+\lambda_2(I_{10}+I_{11}).
\end{split}
\end{equation}
To compute $I_i$, $i=1,\ldots,11$, we will use multiple times Fubini's theorem and suitable changes of variables without mention.

For $I_1$ we have
\begin{gather*}
I_1=\int\limits_0^\infty \D u \int\limits_0^\infty \D v \int\limits_0^{(\oq_1u)\wedge (\oq_2v)} \D x\; p_1(u-x/\oq_1,v-x/\oq_2)\cdot e^{-x-s_1u-s_2v}=\\
=\int\limits_0^\infty \D u \int\limits_0^{\oq_1u/\oq_2} \D v \int\limits_0^{\oq_2v} \D x\; p_1(u-x/\oq_1,v-x/\oq_2)\cdot e^{-x-s_1u-s_2v}+\\
+\int\limits_0^\infty \D u \int\limits_{\oq_1u/\oq_2}^\infty \D v \int\limits_0^{\oq_1u} \D x\; p_1(u-x/\oq_1,v-x/\oq_2)\cdot e^{-x-s_1u-s_2v}\eqqcolon I'_1+I''_1.
\end{gather*}
However,
\begin{gather*}
I'_1=\int\limits_0^\infty \D u \int\limits_0^{\oq_1u/\oq_2} \D v \int\limits_0^{\oq_2v} \D x\; p_1(u-x/\oq_1,v-x/\oq_2)\cdot e^{-x-s_1u-s_2v}=\\
=\oq_2\int\limits_0^\infty \D u \int\limits_0^{\oq_1u/\oq_2} \D v \int\limits_0^v \D z\; p_1(u-\oq_2(v-z)/\oq_1,z)\cdot e^{-\oq_2(v-z)-s_1u-s_2v}=\\
=\oq_2\int\limits_0^\infty \D v \int\limits_0^v \D z \int\limits_{\oq_2v/\oq_1}^\infty \D u\; p_1(u-\oq_2(v-z)/\oq_1,z)\cdot e^{-\oq_2(v-z)-s_1u-s_2v}=\\
=\oq_2\int\limits_0^\infty \D v \int\limits_0^v \D z \int\limits_{\oq_2z/\oq_1}^\infty \D y\; p_1(y,z)\cdot e^{-\oq_2(v-z)-s_1(y+\oq_2(v-z)/\oq_1)-s_2v}=\\
=\oq_2\int\limits_0^\infty \D z \int\limits_{\oq_2z/\oq_1}^\infty \D y \int\limits_z^\infty \D v\; p_1(y,z)\cdot e^{-(1+s_1/\oq_1+s_2/\oq_2)\oq_2v+(1+s_1/\oq_1)\oq_2z-s_1y}=\\
=\dfrac{1}{1+s_1/\oq_1+s_2/\oq_2}\int\limits_0^\infty \D z \int\limits_{\oq_2z/\oq_1}^\infty \D y\; p_1(y,z)\cdot e^{-s_1y-s_2z}.
\end{gather*}
Similarly, we have
\[
I''_1=\dfrac{1}{1+s_1/\oq_1+s_2/\oq_2}\int\limits_0^\infty \D z \int\limits_0^{\oq_2z/\oq_1} \D y\; p_1(y,z)\cdot e^{-s_1y-s_2z},
\]
and so
\[
I_1=\dfrac{1}{1+s_1/\oq_1+s_2/\oq_2}\cdot F(s_1,s_2).
\]

For $I_2$ we have
\begin{gather*}
I_2=\int\limits_0^\infty \D u \int\limits_0^\infty \D v \int\limits_{\oq_2v}^{\oq_1u} \D x\; p_1(u-x/\oq_1+r_2(x/\oq_2-v),0)\cdot e^{-x-s_1u-s_2v}=\\
=\int\limits_0^\infty \D v \int\limits_{\oq_2v}^\infty \D x \int\limits_{r_2(x/\oq_2-v)}^\infty \D y\; p_1(y,0)\cdot e^{-x-s_2v-s_1(y+x/\oq_1-r_2(x/\oq_2-v))}=\\
=\dfrac{\oq_2}{r_2}\int\limits_0^\infty \D v\int\limits_0^\infty \D z\int\limits_z^\infty \D y\; p_1(y,0)\cdot e^{-(z+r_2v)/(r_2/\oq_2)-s_2v-s_1(y+\oq_2(z+r_2v)/(r_2\oq_1)-z)}=\\
=\dfrac{1}{r_2(1+s_1/\oq_1+s_2/\oq_2)}\int\limits_0^\infty \D y\; p_1(y,0)\cdot e^{-s_1y}\cdot \int\limits_0^y e^{-(\oq_2/r_2+s_1\oq_2/(r_2\oq_1)-s_1)z}\D z=\\
=\dfrac{1/\oq_2}{(1+s_1/\oq_1+s_2/\oq_2)(1+s_1/\oq_1-s_1r_2/\oq_2)} \left[F_1(s_1)-F_1\left(\dfrac{1+s_1/\oq_1}{r_2/\oq_2}\right)\right],
\end{gather*}
and similarly
\[
I_3=\dfrac{1/\oq_1}{(1+s_1/\oq_1+s_2/\oq_2)(1+s_2/\oq_2-s_2r_1/\oq_1)} \left[F_2(s_2)-F_2\left(\dfrac{1+s_2/\oq_2}{r_1/\oq_1}\right)\right].
\]

Also, we have
\begin{gather*}
I_4=\int\limits_0^\infty \D u \int\limits_0^{\oq_1u/\oq_2} \D v \int\limits_{u\oq_1}^{(r_1u-v)/(r_1/\oq_1-1/\oq_2)} \D x\; p_1(u-x/\oq_1+r_2(x/\oq_2-v),0)\cdot e^{-x-s_1u-s_2v}=\\
=\dfrac{1}{r_2/\oq_2-1/\oq_1}\int\limits_0^\infty \D u \int\limits_0^{\oq_1u/\oq_2} \D v \int\limits_{r_2(\oq_1u/\oq_2-v)}^{(r_1r_2-1)(\oq_1u/\oq_2-v)/(\oq_1(r_1/\oq_1-1/\oq_2))} \D y\; p_1(y,0)\times\\
\times e^{-(y-u+r_2v)/(r_2/\oq_2-1/\oq_1)-s_1u-s_2v}=\\
=\dfrac{1}{r_2/\oq_2-1/\oq_1}\int\limits_0^\infty \D u \int\limits_0^{\oq_1u/\oq_2} \D z \int\limits_{r_2z}^{(r_1r_2-1)z/(\oq_1(r_1/\oq_1-1/\oq_2))} \D y\; p_1(y,0)\times\\
\times e^{-(y-u+r_2(\oq_1u/\oq_2-z))/(r_2/\oq_2-1/\oq_1)-s_1u-s_2(\oq_1u/\oq_2-z)}=\\
=\dfrac{1/\oq_1}{(r_2/\oq_2-1/\oq_1)(1+s_1/\oq_1+s_2/\oq_2)}\int\limits_0^\infty \D z\; e^{\oq_2(1-r_2s_1/\oq_2+s_1/\oq_1)z/(\oq_1(r_2/\oq_2-/\oq_1))}\times\\
\times\int\limits_{r_2z}^{(r_1r_2-1)z/(\oq_1(r_1/\oq_1-1/\oq_2))} \D y\; p_1(y,0)\cdot e^{-y/(r_2/\oq_2-1/\oq_1)}=\\
=\dfrac{1/\oq_1}{(r_2/\oq_2-1/\oq_1)(1+s_1/\oq_1+s_2/\oq_2)}\int\limits_0^\infty \D y\; p_1(y,0)\cdot e^{-y/(r_2/\oq_2-1/\oq_1)}\times\\
\times\int\limits_{\oq_1(r_1/\oq_1-1/\oq_2)y/(r_1r_2-1)}^{y/r_2} \D z\; e^{\oq_2(1-r_2s_1/\oq_2+s_1/\oq_1)z/(\oq_1(r_2/\oq_2-1/\oq_1))}=\\
=\dfrac{1/\oq_2}{(1-r_2s_1/\oq_2+s_1/\oq_1)(1+s_1/\oq_1+s_2/\oq_2)} \left[F_1\left(\dfrac{1+s_1/\oq_1}{r_2/\oq_2}\right)- F_1\left(\dfrac{r_1+s_1(r_1/\oq_1-1/\oq_2)}{(r_1r_2-1)/\oq_2}\right)\right],
\end{gather*}
and similarly
\[
I_5=\dfrac{1/\oq_1}{(1-r_1s_2/\oq_1+s_2/\oq_2)(1+s_1/\oq_1+s_2/\oq_2)} \left[F_2\left(\dfrac{1+s_2/\oq_2}{r_1/\oq_1}\right)- F_2\left(\dfrac{r_2+s_2(r_2/\oq_2-1/\oq_1)}{(r_1r_2-1)/\oq_1}\right)\right].
\]

Also, for $I_6$ we have
\begin{gather*}
I_6=\int\limits_0^\infty \D u \int\limits_0^{\oq_1u/\oq_2} \D v \int\limits_{(r_1u-v)/(r_1/\oq_1-1/\oq_2)}^\infty \D x\; e^{-x-s_1u-s_2v}\cdot p_1(0,0)=\\
=\dfrac{(r_1/\oq_1-1/\oq_2)/\oq_2}{(1+s_1/\oq_1+s_2/\oq_2)(r_1+s_1(r_1/\oq_1-1/\oq_2))}\cdot p_1(0,0),
\end{gather*}
and similarly
\begin{gather*}
I_7=\dfrac{(r_2/\oq_2-1/\oq_1)/\oq_1}{(1+s_1/\oq_1+s_2/\oq_2)(r_2+s_1(r_2/\oq_2-1/\oq_1))} \cdot p_1(0,0).
\end{gather*}

For $I_8$ we have
\begin{gather*}
I_8=\int\limits_0^\infty \D u \int\limits_0^\infty \D v \int\limits_0^{q_1u} \D x\; p_1(u-x/q_1,v)\cdot e^{-s_1u-s_2v-x}=\\
=\dfrac{1}{1+s_1/q_1}\int\limits_0^\infty \D y \int\limits_0^\infty \D v\; p_1(y,v)\cdot e^{-s_1y-s_2v}=\dfrac{1}{1+s_1/q_1}\cdot F(s_1,s_2).
\end{gather*}

For $I_9$ we have
\begin{gather*}
I_9=\int\limits_0^\infty \D u \int\limits_0^\infty \D v \int\limits_{q_1u}^\infty \D x\; p_1(0,v+r_1(x/q_1-u))\cdot e^{-x-s_1u-s_2v}=\\
=\dfrac{1}{1+s_1/q_1} \int\limits_0^\infty \D y \int\limits_0^\infty \D v\; p_1(0,v+r_1y)\cdot e^{-s_2v-q_1y}=\\
=\dfrac{1}{r_1(1+s_1/q_1)} \int\limits_0^\infty \D v \int\limits_v^\infty \D z\; p_1(0,z)\cdot e^{-s_2v-q_1(z-v)/r_1}=\\
=\dfrac{1/q_1}{(1+s_1/q_1)(r_1s_2/q_1-1)}\left[F_2(q_1/r_1)-F_2(s_2)\right].
\end{gather*}

Similarly, we have
\[
I_{10}=\dfrac{1}{1+s_2/q_2}\cdot F(s_1,s_2)
\]
and
\[
I_{11}=\dfrac{1/q_2}{(1+s_2/q_2)(r_2s_1/q_2-1)}\left[F_1(q_2/r_2)-F_1(s_1)\right].
\]

Substituting the obtained values of $I_i$, $i=1,\ldots,11$, into~\eqref{eq:main_equation} and multiplying both sides by $-1$ finishes the proof.
\end{proof}

\section*{Acknowledgements}
The authors are grateful to the anonymous referees for a careful reading of the paper and valuable comments that helped to improve the exposition.

This work was initiated during the visit of S.~Franceschi to Aarhus. The authors gratefully acknowledge financial support of Sapere Aude Starting Grant 8049-00021B ``Distributional Robustness in Assessment of Extreme Risk'' from Independent Research Fund Denmark.

\bibliographystyle{apalike}
\bibliography{Bibliography}
\end{document}